\documentclass[oneside,english]{amsart}
\usepackage[T1]{fontenc}
\usepackage[latin9]{inputenc}
\usepackage{amsthm}
\usepackage{amstext}
\usepackage{amssymb}

\makeatletter
\numberwithin{equation}{section}
\numberwithin{figure}{section}
  \theoremstyle{plain}
  \newtheorem*{thm*}{Theorem}
\theoremstyle{plain}
\newtheorem{thm}{Theorem}
  \theoremstyle{definition}
  \newtheorem{defn}[thm]{Definition}
  \theoremstyle{plain}
  \newtheorem{prop}[thm]{Proposition}
  \theoremstyle{plain}
  \newtheorem{lem}[thm]{Lemma}
  \theoremstyle{remark}
  \newtheorem{rem}[thm]{Remark}

\makeatother

\usepackage{babel}

\begin{document}

\title{Differential Structure and Flow equations on Rough Path Space}

\author{Zhongmin Qian, Jan Tudor}

\subjclass[2000]{60H07}

\keywords{Malliavin Calculus, Rough Paths, Tangent Spaces}
\begin{abstract}
We introduce a differential structure for the space of weakly geometric
$p$ rough paths over a Banach space $V$ for $2<p<3$. We begin by
considering a certain natural family of smooth rough paths and differentiating
in the truncated tensor series. The resulting object has a clear interpretation,
even for non-smooth rough paths, which we take to be an element of
the tangent space. We can associate it uniquely to an equivalence
class of curves, with equivalence defined by our differential structure.
Thus, for a functional on rough path space, we can define the derivative
in a tangent direction analogous to defining the derivative in a Cameron-Martin
direction of a functional on Wiener space. Our tangent space contains
many more directions than the Cameron-Martin space and we do not require
quasi-invariance of Wiener measure. In addition we also locally (globally)
solve the associated flow equation for a class of vector fields satisfying
a local (global) Lipshitz type condition. 
\end{abstract}
\maketitle

\section{Introduction}

The main examples of continuous random models are those constructed
by solving Itô's stochastic differential equations. By means of Itô's
integration, one is able to define a unique strong solution to the
following Stratonovich type of differential equation\begin{equation}
dX_{t}^{i}=f_{0}^{i}(t,X_{t})dt+\sum_{j=1}^{d}f_{j}^{i}(t,X_{t})\circ dW_{t}^{j}\text{, }X_{0}=x\label{r01}\end{equation}
where $i$ runs from $1$ to $n$, $W=(W^{1},\cdots,W^{d})$ is a
$d$-dimensional Brownian motion on a probability space, and $\circ d$
denotes the Stratonovich differential. Equation (\ref{r01}) has to
be interpreted as an integration equation\begin{equation}
X_{t}^{i}=x^{i}+\int_{0}^{t}f_{0}^{i}(s,X_{s})ds+\sum_{j=1}^{d}\int_{0}^{t}f_{j}^{i}(s,X_{s})\circ dW_{s}^{j}\text{, }X_{0}=x\label{r02}\end{equation}
where the integration is understood as the Stratonovich integrals
which in turn can be converted to Itô's integrals. Suppose that the
coefficients $f_{j}^{i}$ are smooth with bounded derivatives. The
important nature is that the strong solution of (\ref{r01}) is defined
only almost surely, although the distribution of $X=(X_{t})_{t\geq0}$
is determined uniquely and is independent of the Brownian motion $W$.
On the other hand, there is a measurable mapping $F$ from $R^{+}\times R^{n}\times\mathcal{C}(R^{+};R^{d})$
to $R^{n}$ associated with (\ref{r01}) such that $X_{t}=F(t,x,W)$
is the unique strong solution to (\ref{r01}). Moreover, for each
$t\geq0$ and $\omega\in\mathcal{C}(R^{+};R^{d})$, $x\rightarrow F(t,x,\omega)$
is a diffeomorphism of $R^{n}$. In particular, the strong solution
to (\ref{r01}) is differentiable in the initial data $x$, which
will not be surprising to anyone who has experience with dynamical
systems. It was Malliavin who first observed that the mapping $\omega\rightarrow F(t,x,\omega)$
is differential in direction $h$ which belongs to the Cameron-Martin
space of the Wiener measure, i.e. for $h\in H_{0}^{1}(R^{+};R^{d})$,
where $H_{0}^{1}(R^{+},R^{d})$ is the space of all paths $h$ in
$R^{d}$ whose generalized derivative $\dot{h}\in L^{2}(R^{+};R^{d})$.

In this article with the help of Lyons' continuity theorem we identify
the differential structure on the space of rough paths which allows
us to differentiate Wiener functionals along more tangent directions
than those determined by the Cameron-Martin space. One is thus able
to use the machinery of rough paths together with nonlinear functional
analysis to study Wiener functionals, providing powerful mathematical
tools.

In Malliavin's calculus, the Wiener functionals we are interested
in are functions on the space of continuous paths $C\left(\left[0,\infty\right);V\right)$,
where $V=R^{d}$ for simplicity. The distribution $\mu$ of the standard
Brownian motion in $R^{d}$ is a probability measure on $C\left(\left[0,\infty\right);V\right)$.
If $h\in C\left(\left[0,\infty\right);V\right)$ then the measurable
transformation $\tau_{h}$ which sends a path $x$ to $x+h$ gives
rise to a push-forward measure $\mu_{h}$ defined by $\mu_{h}\left(A\right)=\mu\circ\tau_{h}\left(A\right)$.
A classical result in probability theory says that $\mu_{h}$ is absolutely
continuous with respect to $\mu$ if and only if $h$ belongs to the
Cameron-Martin space $H$ consisting of all paths $h\in C\left(\left[0,\infty\right);V\right)$
whose generalized derivative $\dot{h}\in L^{2}([0,\infty))$. Moreover,
according to Cameron-Martin \cite{CamMart}, in this case, \[
\frac{d\mu_{h}}{d\mu}=\exp\left[\int_{0}^{\infty}\dot{h}(t)d\omega(t)-\int_{0}^{\infty}|\dot{h}(t)|^{2}dt\right]\]
where $d\omega(t)$ is understood as Itô's differential. This property
of the Wiener measure is known as the quasi-invariance of Wiener measure.

Malliavin (see \cite{MalDerivOrig}) initiated a study of differentiating
Wiener functionals on $C\left(\left[0,\infty\right);V\right)$ in
order to address the regularities of their laws. An important result
is that many Wiener functionals are smooth in the Cameron-Martin directions.
Because the Wiener functionals (namely solutions to some stochastic
differential equations) we are interested in are only defined almost
surely, it is possible to differentiate such functions on $C\left(\left[0,\infty\right);V\right)$
only along the directions given in the Cameron-Martin space, and therefore
one has to perturb a path in a Cameron-Martin direction in order to
preserve the measure. In rough path analysis the Wiener functionals
are lifted to continuous functions on rough path space and therefore
quasi-invariance of the Wiener measure is not required. This allows
us to develop a calculus of variations without referring to the Wiener
measure.

The theory of rough paths, see \cite{LyonsIbero} for a detailed discussion,
was motivated in part by a desire to have a deterministic or pathwise
way of dealing with stochastic differential equations. The core idea
is that for paths which have infinite variation as typical stochastic
paths do, for example Brownian motion, defining the integral as a
Riemann sum is not sufficient. It turns out that for less regular
paths, in addition to increments, one needs information about the
area enclosed by a path and possibly higher order volumes in order
to define an integration theory. The regularity of a rough path, in
general, determines how many higher order terms must be considered.
For simplicity, we restrict ourselves to the simplest true rough paths,
i.e. rough paths with roughness $p$ where $2<p<3$ (see below for
an explanation). A rough path $X$ with roughness $p$ (so called
a $p$-rough path) is a map on the simplex $\Delta_{T}:=\left\{ \left(s,t,\right):s,t\in\left[0,T\right]\right\} $
taking values in the truncated \ tensor algebra\[
T^{2}\left(V\right):=1\oplus V\oplus V^{\otimes2}\text{,}\]
which satisfies Chen's identity, $X_{s,t}\otimes X_{t,u}=X_{s,u}$
for all $s,t,u\in\left[0,T\right]$ with $s<t<u$, and a regularity
condition (\ref{eq-0e}). Here the tensor multiplication $\otimes$
takes place in $T^{2}\left(V\right)$ so that \begin{align*}
X_{s,u}^{1}= & X_{s,t}^{1}+X_{t,u}^{1}\text{,}\\
X_{s,u}^{2}= & X_{s,t}^{2}+X_{t,u}^{2}+X_{s,t}^{1}\otimes X_{t,u}^{1}\text{,}\end{align*}
where $X_{s,t}^{1}\in V$, $X_{s,t}^{2}\in V^{\otimes2}$ are the
components of $X_{s,t}$ in $V$ and $V^{\otimes2}$. $X$ has finite
$p$-variation in the sense that \begin{equation}
\sup_{\mathcal{D}}\left(\sum_{l}\left\vert X_{s,t}^{i}\right\vert ^{\frac{p}{i}}\right)^{\frac{i}{p}}<\infty\label{eq-0e}\end{equation}
for $i=1,2$.

Let $x(t)=X_{0t}^{1}$ for $t\leq T$. Then $X_{s,t}^{1}=x(t)-x(s)$.
We sometimes say $X$ is a rough path over the continuous path $x$.
On the other hand, if given a continuous path $x$ with finite variation
(up to time $T$), one may construct a rough path $X$, called the
canonical lift of $x$, by \begin{align*}
X_{s,t}^{1}= & x\left(t\right)-x\left(s\right)\\
X_{s,t}^{2}= & \int_{s<u_{1}<u_{2}<t}dx\left(u_{1}\right)\otimes dx\left(u_{2}\right)\end{align*}
where the integral is defined via Riemann sums. In this case Chen's
identity is just the additivity of iterated integrals over different
intervals. Such a $p$-rough path is called a \emph{smooth} $p$-rough
path.

The most interesting $p$-rough paths (where $2<p<3$) are of course
those over Brownian motion sample paths. Observe that Brownian motion
sample paths are, with probability one, Hölder continuous with exponent
less than one half, which implies they have finite $p$-variation
only for $p>2$. It is well established that almost all Brownian motion
sample paths can be lifted canonically to $p$-variation rough paths
for $2<p<3$.

The space of $p$-rough paths equipped with the $p$-variation distance
\[
d_{p}\left(X,Y\right)=\max_{i}\left[\sup_{\mathcal{D}}\left(\sum_{l}\left\vert X_{s,t}^{i}-Y_{s,t}^{i}\right\vert ^{\frac{p}{i}}\right)^{\frac{i}{p}}\right]\]
is a complete metric space, denoted by $\Omega_{p}\left(V\right)$.
This space contains two special subspaces $G\Omega_{p}\left(V\right)$
and $WG\Omega_{p}\left(V\right)$ which we define subsequently. The
$p$-rough paths which are the limit, in $p$-variation distance,
of a sequence of smooth rough paths are called geometric $p$-rough
paths and denoted $G\Omega_{p}\left(V\right)$. While weakly geometric
$p$-rough paths, denoted $WG\Omega_{p}\left(V\right)$, are the elements
of $\Omega_{p}\left(V\right)$ that can be realized as the limit in
uniform topology of the canonical lifts of bounded $p$-variation
smooth paths. See for example \cite{NoteOnGeoRP} for more details
on the differences between these spaces.

In this article, we identify a useful representation of the tangent
space associated to a natural differential structure on $WG\Omega_{p}\left(V\right)$.
The reason for our definition of derivative comes from the following
observation for a finite variation path $x$. Given another finite
variation path $y$, one can produce the variational path $x+\varepsilon y$
for $\varepsilon\in\left[0,1\right]$ say. This then induces a variation
at the level of rough paths, $X\left(\varepsilon\right)$, of the
canonical lift $X$ of $x$ which, due to the finite variation, is
given by \begin{align*}
X\left(\varepsilon\right)^{1}= & \int dx+\varepsilon\int dy\\
X\left(\varepsilon\right)^{2}= & \int dx\otimes dx+\varepsilon\left(\int dx\otimes dy+\int dy\otimes dx\right)+\varepsilon^{2}\int dy\otimes dy\end{align*}
where we have suppressed the limits of integration. In this case,
the derivative of $X\left(\varepsilon\right)$ in the linear space
$C\left(\Delta_{T},T^{2}\left(V\right)\right)$ is \[
\left.\frac{d}{d\varepsilon}\right\vert _{\varepsilon=0}X\left(\varepsilon\right)=\left(0,\int dy,\int dx\otimes dy+\int dy\otimes dx\right)\text{.}\]
Note that if $x$ and $y$ have finite $p$-variation, then the cross
iterated integrals $\int dx\otimes dy$ and $\int dy\otimes dx$ have
finite $\frac{p}{2}$ variation. However, in addition to varying the
increment, we can also vary the second level path independently, by
$\varphi$. Hence, we modify $X\left(\varepsilon\right)$ to include
both first and second level variations and obtain\begin{align*}
X\left(\varepsilon\right)^{1}= & \int dx+\varepsilon\int dy\\
X\left(\varepsilon\right)^{2}= & \int dx\otimes dx+\varepsilon\left(\int dx\otimes dy+\int dy\otimes dx+\varphi\right)+\varepsilon^{2}\int dy\otimes dy\end{align*}
so that \[
\left.\frac{d}{d\varepsilon}\right\vert _{\varepsilon=0}X\left(\varepsilon\right)=\left(0,\int dy,\int dx\otimes dy+\int dy\otimes dx+\varphi\right)\text{.}\]
We remark that, in some sense, $X\left(\varepsilon\right)$ is the
simplest variation of $X$ and that its derivative at $0$ can be
associated to the pair $\left(Z,\varphi\right)$ for $Z\in\Omega_{p}\left(V\oplus V\right)$
with $Z^{1}=\left(\int dx,\int dy\right)$, \[
Z^{2}=\left(\begin{array}{cc}
\int dx\otimes dx & \int dx\otimes dy\\
\int dy\otimes dx & \int dy\otimes dy\end{array}\right)\]
and $\varphi\in\Omega_{p/2}\left(V\oplus V\right)$. From this identification,
it is possible to make rigorous the meaning of the cross iterated
integrals $\int dx\otimes dy$ and $\int dy\otimes dx$ as certain
projections, denoted $\pi_{12}\left(Z\right)$ and $\pi_{21}\left(Z\right)$
(see below for an explanation), of an element of $Z\in\Omega_{p}\left(V\oplus V\right)$
even if $x$ and $y$ are non-finite variation paths. Hence, for each
pair $\left(Z,\varphi\right)$ we define a variational curve $V_{\left(Z,\varphi\right)}\left(\varepsilon\right)$
at $X$ by \begin{align*}
V_{\left(Z,\varphi\right)}\left(\varepsilon\right)^{1}= & X^{1}+\varepsilon\pi_{2}\left(Z\right)^{1}\\
V_{\left(Z,\varphi\right)}\left(\varepsilon\right)^{2}= & X^{2}+\varepsilon\left[\pi_{12}\left(Z\right)+\pi_{21}\left(Z\right)+\varphi\right]+\varepsilon^{2}\pi_{2}\left(Z\right)^{2}\end{align*}
where we use the notation \[
Z=\left(1,\left(\pi_{1}\left(Z\right)^{1},\pi_{2}\left(Z\right)^{2}\right),\left(\begin{array}{cc}
\pi_{1}\left(Z\right)^{2} & \pi_{1,2}\left(Z\right)\\
\pi_{2,1}\left(Z\right) & \pi_{2}\left(Z\right)^{2}\end{array}\right)\right)\text{,}\]
where $\pi_{i}$ and $\pi_{ij}$ are natural projections which should
be self-evident. Finally we have to identify the equivalence classes
of variations which give the same derivative. Therefore, we say that
$\left(Z,\varphi\right)$ is equivalent to $\left(\tilde{Z},\tilde{\varphi}\right)$
if \[
\left.\frac{d}{d\varepsilon}\right\vert _{\varepsilon=0}V_{\left(Z,\varphi\right)}\left(\varepsilon\right)=\left.\frac{d}{d\varepsilon}\right\vert _{\varepsilon=0}V_{\left(\tilde{Z},\tilde{\varphi}\right)}\left(\varepsilon\right)\]
and $\pi_{1}\left(Z\right)=X$ and denote the equivalence class $\left[Z,\varphi\right]$.
Note that we cannot uniquely assign a variational curve to an equivalence
class $\left[Z,\varphi\right]$ because we have a choice of the $\pi_{2}\left(Z\right)^{2}$
term. Our first main theorem \ref{thm:TgtFromCurve} shows that the
collection of all equivalence classes $[Z,\varphi]$ is the tangent
space at $X$, in the sense that, every possible differentiable curve
of rough paths starting at $X$ (whose derivative is taken in a function
space $C(\Delta,T^{(2)}(V))$) is determined uniquely by some $\left[Z,\varphi\right]$.

The idea behind the first main theorem can be described as the following.
If $X\left(\varepsilon\right)$ is an arbitrary curve of rough paths,
then the pair $\left(X^{1}\left(0\right),\left[\left.\frac{d}{d\varepsilon}\right\vert _{\varepsilon=0}X\left(\varepsilon\right)\right]^{1}\right)$
will in general not have a canonical lift to $\Omega_{p}\left(V\oplus V\right)$.
Therefore, a way of obtaining a $p$-rough path ($\left\lfloor p\right\rfloor =2$)
from just the increment level is required and is provided, though
not uniquely, by the Lyons-Victoir extension (see \cite{LyonsVicExten}).
The conditions for the extension theorem are the reason we are restricted
to the case of weakly geometric rough paths rather than general rough
paths. For the proof of this result in the general case we refer the
reader to \cite{LyonsVicExten}. For completeness we include a proof
in $\mathbb{R}^{d}$ for a generalized version following the same
argument as in \cite{LyonsVicExten} in the appendix. An interesting
point which distinguishes our setting from that of Malliavin calculus
is that for each perturbation of a path in a Cameron-Martin direction
there are infinitely many perturbations of the lifted rough path,
each corresponding to a choice of {}``cross-iterated integrals\textquotedblright{}
(projections) of the path and the Cameron-Martin direction. In other
words, for each Cameron-Martin direction, there are infinitely many
variations of the rough path which are not equivalent but have the
same variation at the path level.

We demonstrate that the tangent space is a well defined linear vector
space, and forms a bundle over the space of rough paths, but unfortunately
we do not believe that it is a fibre bundle. What is missing is the
structure of local trivialization. Yet, we are still able to solve
the flow equation

\[
C^{\prime}\left(\tau\right)=F\left(C\left(\tau\right)\right)\text{, }C\left(0\right)=X\]
for $\tau\in\left[0,T\right]$ on $WG\Omega_{p}\left(V\right)$ for
a class of functions $F$ which are Lipschitz in some sense to be
defined later (see definition \ref{def:Fcond}). Here we must consider
the derivative on the left as the tangent vector $\left[Z,\varphi\right]\left(\tau\right)$
uniquely associated to the curve $C\left(\tau\right)$ and $F$ as
assigning an element of the tangent space to each point of the curve.
Furthermore, we say a curve $U\left(\cdot\right)$ is a solution to
the flow equation if\[
\lim_{h\downarrow0}h^{-1}\left[d_{q}\left(U\left(\tau+h\right),V_{\left[F_{Z}\left(U\left(\tau\right)\right),F_{\varphi}\left(U\left(\tau\right)\right)\right]}\left(h\right)\right)\right]=0\]
for all $\tau$ and $U(0)=X$.

This definition requires some explanation. The reason for the appearance
of $d_{q}$ is that although for $p$-rough path space the natural
metric used in the solution definition should be $d_{p}$, due to
technical limitations we must use the metric $d_{q}$ for some $q>p$.
Specifically this is caused by the lack of an intrinsic compactness
theorem for sets in $\Omega_{p}$ which forces us to find compactness
in $\Omega_{q}$ for some $q>p$. Now, loosely speaking, since $V$
is a variational curve with parameter $h$ for the tangent vector
assigned to $U$ at time $\tau$, in a Banach space setting the above
would reduce to\[
\lim_{h\downarrow0}h^{-1}\left\Vert U\left(\tau+h\right)-\left[U\left(\tau\right)+hF\left(U\left(\tau\right)\right)\right]\right\Vert =0\text{,}\]
or equivalently,\[
\lim_{h\downarrow0}\left\Vert \frac{U\left(\tau+h\right)-U\left(\tau\right)}{h}-F\left(U\left(\tau\right)\right)\right\Vert =0\text{.}\]
Therefore our definition makes sense on the rough path space and seems
natural. However, the non-uniqueness of a variational curve $V$ associated
to a tangent vector means the definition may depend on the choice
of variational curve. We avoid this issue by only considering a class
of $F$ such that there exists a canonical choice of variational curve
associated to $F\left(U\left(\tau\right)\right)$ and construct a
solution via an Euler scheme.

Let us now more precisely state the main results we have briefly discussed
above.
\begin{thm*}
Let $C\left(\varepsilon\right):\left[-\tau,\tau\right]\rightarrow WG\Omega_{p}\left(V\right)$
be such that $C\left(0\right)=X$ and let \[
\left(1,C^{\prime}\left(0\right)^{1},C^{\prime}\left(0\right)^{2}\right)\]
be its derivative at $0$ in $T^{2}\left(V\right)$ sense. If $C^{\prime}\left(0\right)^{1}$
has finite $p$-variation and $C^{\prime}\left(0\right)^{2}$ has
finite $\frac{p}{2}$-variation, there exists a unique equivalence
class $\left[Z,\varphi\right]$ such that \[
\left.\frac{d}{d\varepsilon}\right\vert _{\varepsilon=0}C\left(\varepsilon\right)=\left.\frac{d}{d\varepsilon}\right\vert _{\varepsilon=0}V_{\left[Z,\varphi\right]}.\]
Where $\left[Z,\varphi\right]$ is a tangent vector and $V_{\left[Z,\varphi\right]}$$\left(\varepsilon\right)$
is any variational curve associated to it. 
\end{thm*}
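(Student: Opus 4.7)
The plan is to construct a representative pair $(Z,\varphi)$ whose variational curve has the prescribed derivative at $\varepsilon=0$, and then to read off uniqueness directly from the definition of the equivalence relation. Matching derivatives amounts to exactly two conditions: at the first level, $\pi_{2}(Z)^{1} = C'(0)^{1}$, and at the second level, $\pi_{12}(Z) + \pi_{21}(Z) + \varphi = C'(0)^{2}$, together with the constraint $\pi_{1}(Z) = X$ that is built into the equivalence relation.

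For existence, I first consider the path $\eta(t) := (X^{1}_{0,t},\, C'(0)^{1}_{0,t})$ in $V \oplus V$, which has finite $p$-variation since both components do. By the Lyons--Victoir extension theorem (the version adapted to the present setting is proved in the appendix), $\eta$ admits a weakly geometric $p$-rough path lift $Z \in \Omega_{p}(V \oplus V)$, whose four second-level blocks $\pi_{1}(Z)^{2}$, $\pi_{12}(Z)$, $\pi_{21}(Z)$, $\pi_{2}(Z)^{2}$ all have finite $p/2$-variation. The block $\pi_{1}(Z)^{2}$ need not coincide with $X^{2}$, but since both satisfy the same Chen identity driven by $X^{1}$, their difference is an additive two-parameter functional of finite $p/2$-variation; replacing the top-left block of $Z^{2}$ by $X^{2}$ therefore gives a rough path in $\Omega_{p}(V \oplus V)$ (all four block Chen identities are preserved) that now satisfies $\pi_{1}(Z) = X$. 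Setting $\varphi := C'(0)^{2} - \pi_{12}(Z) - \pi_{21}(Z)$, which inherits finite $p/2$-variation by subtraction, yields a pair whose variational curve has derivative $(0,\, C'(0)^{1},\, C'(0)^{2})$ at $\varepsilon = 0$, exactly as required.

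Uniqueness of the equivalence class is then immediate: if $(\tilde{Z}, \tilde{\varphi})$ is any other pair with $\pi_{1}(\tilde{Z}) = X$ whose variational curve has the same derivative at $0$ as $V_{(Z,\varphi)}$, then by the very definition of the equivalence relation $(Z,\varphi) \sim (\tilde{Z}, \tilde{\varphi})$, so the class $[Z,\varphi]$ is well defined.

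The main obstacle is producing the cross iterated integrals $\pi_{12}(Z)$ and $\pi_{21}(Z)$ with finite $p/2$-variation: since $p > 2$, Young's theory cannot be applied to the pair $(X^{1}, C'(0)^{1})$ directly, and the Lyons--Victoir extension is precisely what supplies these objects. This same step is responsible both for the non-uniqueness of $(Z,\varphi)$ at the representative level (resolved by passing to equivalence classes via the free choice of $\pi_{2}(Z)^{2}$ and the cross terms) and for the restriction of the entire framework to weakly geometric rough paths.
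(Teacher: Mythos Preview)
Your approach is essentially the same as the paper's: take a Lyons--Victoir extension $W$ of $(X^{1},C'(0)^{1})$, replace the $(1,1)$ block of $W^{2}$ by $X^{2}$ to obtain $Z$ with $\pi_{1}(Z)=X$, and set $\varphi:=C'(0)^{2}-\pi_{12}(Z)-\pi_{21}(Z)$.

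There is one point you pass over too quickly. You write that $\varphi$ ``inherits finite $p/2$-variation by subtraction,'' but for $\varphi\in\Omega_{p/2}$ you need more than a variation bound: since $\lfloor p/2\rfloor=1$, the object $\varphi$ must be \emph{additive}, i.e.\ $\varphi_{s,u}=\varphi_{s,t}+\varphi_{t,u}$. None of the three terms $C'(0)^{2}$, $\pi_{12}(Z)$, $\pi_{21}(Z)$ is additive on its own (each satisfies a Chen-type identity with a cross term), so additivity of the difference is not automatic from subtraction. The paper spends roughly half of its proof on exactly this verification: differentiating Chen's identity for $C(\varepsilon)^{2}$ at $\varepsilon=0$ gives
\[
C'(0)^{2}_{s,u}-C'(0)^{2}_{s,t}-C'(0)^{2}_{t,u}=C'(0)^{1}_{s,t}\otimes X^{1}_{t,u}+X^{1}_{s,t}\otimes C'(0)^{1}_{t,u},
\]
which precisely cancels the cross terms coming from $\pi_{12}(Z)$ and $\pi_{21}(Z)$. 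You should include this step; once it is in place your argument is complete and matches the paper's.
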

That is, we identify the space of equivalence classes of curves which
abstractly define the tangent space. Furthermore, on such spaces we
have the following.
\begin{thm*}
If $F$ is a locally Lipschitz near $X_{0}$ vector field on $WG\Omega_{p}$,
then there exists a unique solution $U:\left[0,\alpha\right]\rightarrow\Omega_{q}\left(V\right)$
to the flow equation for $q>p$. 
\end{thm*}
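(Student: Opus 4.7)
The strategy is a standard Picard/Euler scheme adapted to the rough-path setting, with the compactness issue circumvented by working in $d_q$. Fix a neighbourhood $U$ of $X_0$ on which $F$ is Lipschitz with constant $L$, and write $F = (F_Z,F_\varphi)$; by the assumption (Definition \ref{def:Fcond}) there is a canonical choice of variational curve $V_{[F_Z(Y),F_\varphi(Y)]}(\cdot)$ attached to $F(Y)$ for every $Y \in U$. For each $n \geq 1$, choose the uniform partition $\tau_k^n = k\alpha/n$ of a tentative existence interval $[0,\alpha]$, and recursively define the Euler approximation by $U_n(0) = X_0$ and
\[
U_n(\tau_{k+1}^n) \;=\; V_{[F_Z(U_n(\tau_k^n)),\,F_\varphi(U_n(\tau_k^n))]}\bigl(\tau_{k+1}^n-\tau_k^n\bigr),
\]
extending $U_n$ to all of $[0,\alpha]$ by using the same variational curve on each subinterval.

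The first substantive step is to obtain uniform-in-$n$ control on the $p$-variation of $U_n$ along $[0,\alpha]$. Expanding $V_{[Z,\varphi]}(\varepsilon)$ from the formulas given in the introduction and using the Lipschitz bound on $F$, a one-step estimate
\[
d_p\bigl(U_n(\tau_{k+1}^n),U_n(\tau_k^n)\bigr)\;\leq\; C(\tau_{k+1}^n-\tau_k^n)\bigl(1+\|U_n(\tau_k^n)\|_{p\text{-var}}\bigr)
\]
is available, and a discrete Gronwall iteration then gives a uniform bound, provided $\alpha$ is chosen small enough that $U_n(\tau)$ never leaves $U$. This determines the local existence horizon.

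The crucial step is the limit extraction. Since $\Omega_p$ has no intrinsic compactness, one invokes the interpolation estimate $d_q \leq d_p^{p/q}\cdot(\text{supremum bound})^{1-p/q}$ for $q > p$: a family bounded in $p$-variation and equicontinuous in the sup norm on $\Delta_T$ is relatively compact in $d_q$. Applying this both on the simplex level (fixed $\tau$) and in the time parameter $\tau$, an Arzel\`a--Ascoli argument yields a subsequence $U_{n_k}$ converging to some $U \colon [0,\alpha] \to \Omega_q(V)$ in $d_q$. To verify that $U$ solves the flow equation, one estimates
\[
d_q\bigl(U(\tau+h),\,V_{[F_Z(U(\tau)),\,F_\varphi(U(\tau))]}(h)\bigr)
\]
by comparing with the Euler approximation: the triangle inequality, Chen's identity and the $d_q$-continuity of $F$ reduce the quantity to the one-step Euler error, which the Lipschitz hypothesis forces to be $o(h)$.

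Uniqueness is a Gronwall-type argument. If $U$ and $\tilde U$ are two solutions starting at $X_0$, differencing the identities satisfied by consecutive Euler increments and using the Lipschitz bound on $F$ yields an inequality of the form $d_q(U(\tau),\tilde U(\tau)) \leq L\int_0^\tau d_q(U(s),\tilde U(s))\,ds$, whence $U = \tilde U$. The main obstacle, in my view, is the compactness/passage-to-the-limit step: the Euler scheme produces iterates whose $p$-variation is controlled, but convergence is only available in the coarser $d_q$, and one must check that the defining limit of the flow equation, which a priori is a statement at the $p$-variation level, genuinely survives this weakening and that the canonical variational curve of $F$ depends on its argument continuously in $d_q$ (not merely $d_p$).
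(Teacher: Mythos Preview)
Your approach is essentially the paper's: an Euler scheme produces approximate solutions with uniform $p$-variation control, Arzel\`a--Ascoli plus the interpolation bound $d_q \lesssim d_p^{p/q}(\text{sup})^{1-p/q}$ extracts a $d_q$-limit, and uniqueness is Gronwall on $g(\tau)=d_q(U(\tau),\tilde U(\tau))$. The paper packages the approximations as ``$\varepsilon$-solutions'' (Definition~\ref{def:LightSoln}) rather than indexing by mesh size $1/n$, but this is cosmetic.

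Your closing worry is the right place to focus, and the paper resolves it through the \emph{second} clause of Definition~\ref{def:Fcond}: locally Lipschitz near $X_0$ means not only $\tilde d_p(F(X),F(Y))\le C_1 d_p(X,Y)$ but also $\tilde d_q(F(X),F(Y))\le C_2 d_p(X,Y)$. This mixed estimate is what lets the limit step go through: in Lemma~\ref{lem:LimIsSoln} the one-step error is bounded in $d_q$ by $C_2 d_p(U^n_{\tau_i},U^n_\tau)+O(h)$, and since the approximants obey the uniform $d_p$-Lipschitz bound~(\ref{eq:EpsSolnLip}), the right-hand side is controlled by the mesh $\varepsilon_n$ independently of the $d_q$-convergence. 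So you do not actually need $F$ to be $d_q$-to-$d_q$ continuous; the hypothesis is engineered precisely so that $d_p$-information on the approximants (which you retain uniformly) feeds the $d_q$-estimate on the flow identity.
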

Furthermore, if we strengthen the condition on $F$ to a globally
Lipschitz one then we obtain a global solution. This is the content
of theorem \ref{thm:GlobalSoln}. We refer the reader to definition
\ref{def:Fcond} for a precise explanation of what we mean by {}``$F$
is locally Lipschitz near $X_{0}$''.

There are a great number of papers dealing with tangent vectors of
Wiener space. Malliavin first introduced his idea of differentiating
a functional on Wiener space in \cite{MalDerivOrig}. Since then,
many topics have been developed and many powerful techniques have
been produced within his calculus. In particular, much work has been
done extending the Cameron-Martin quasi-invariance theorem to Wiener
space on a based manifold as a corollary to the existence of a flow
associated to some vector fields. Many of these papers consider more
general tangent vectors of Wiener space.

In \cite{Cruz1st}, Cruzeiro considered vector fields as maps from
Wiener space to the Cameron-Martin space, which satisfied certain
exponential integrability estimates. For these special vector fields,
the author was able to produce a solution flow by approximating the
fields through projection onto a finite dimensional space which was
identified with $\mathbb{R}^{n}$ for which ordinary differential
equation techniques produce a flow. As a corollary, it was obtained
that for any constant vector field (i.e. for all $x\in C\left(\left[0,1\right];R\right)$,
$F\left(x\right)=h$ for some $h$ in Cameron-Martin space) the measure
induced by the corresponding flow $U_{t}\left(x\right)$ is absolutely
continuous with respect to the Wiener measure. This paper instigated
a series of works related to extending the results to manifolds. In
\cite{MalliNatur,MalliLoops}, the authors showed that the Wiener
measure on loops over a compact connected Lie group is quasi-invariant
with respect to the left action of paths on the group which have finite
energy, i.e. $\int_{0}^{1}\left\Vert u^{-1}\left(t\right)\dot{u}\left(t\right)\right\Vert ^{2}dt<\infty$.
However, for the right action in \cite{MalliNatur}, Malliavin deals
with the Wiener measure on a connected Lie group of matrices, for
which a negative result is obtained. He defines the tangent space
as the space of continuous paths $u$ taking values in the Lie algebra
such that $\int_{0}^{1}\left\Vert \dot{u}\left(t\right)\right\Vert ^{2}dt<\infty$.
The main theorem then shows that if the adjoint representation is
not unitary, one cannot obtain quasi-invariance of the measure induced
by infinitesimal right action by elements of this tangent space. 

A major step in extension to manifolds was accomplished by Driver
in \cite{DriverFlow} where the author proves a quasi-invariance result
for the path space of a compact manifold without boundary, thereby
extending the work of Cruzeiro. He defines a tangent vector field
on the based path space $W$ as a map $X^{h}\left(\omega\right)\left(s\right):=H\left(\omega\right)\left(s\right)h\left(s\right)$
where $H\left(\omega\right)\left(s\right)$ is the stochastic parallel
translation along $\omega$ on the interval $\left[0,s\right]$, and
$h:\left[0,1\right]\rightarrow T_{o}M$ with $h\left(0\right)=o$
and $h$ has finite energy. The flow is constructed through geometric
means whenever the covarient derivative satisfies the torsion skew
symmetric condition. Furthermore, the quasi-invariance of the induced
measure is proved. Note that in the case that $M=\mathbb{R}^{n}$,
the tangent vector fields reduce to $X^{h}\left(\omega\right)\left(s\right)=h\left(s\right)$,
i.e. the usual Cameron-Martin space if we identify the tangent space
with $\mathbb{R}^{n}$. In this case, the flow is solved to be $u\left(t\right)=\omega+th$.
The torsion skew symmetric condition was relaxed in \cite{VFonPathSpace}
to allow for any affine connection which is adjoint skew symmetric
(if the affine connection preserves the Riemannian metric then the
adjoint skew symmetric and torsion skew symmetric conditions are equivalent).
An extension of these results is made in \cite{DriverFlow2} where
a flow is produced and a quasi-invariance theorem is shown for the
case of vector fields defined as above, but where $h$ is replaced
by a continuous semi-martingale of particular form.

A deterministic construction of Driver's flow on a closed Riemannian
manifold is produced in \cite{FlowOnRP} using the theory of rough
paths. Here Lyons and Qian construct a flow for a class of vector
fields obtained from solving a class of rough differential equations.
They apply this to the construction of Driver's flow using the fact
that one can solve the flow equation for a geometric vector field
by considering the solution flow of an Ito map obtained by solving
a certain differential equation.

A further notion of tangent spaces was investigated by Cipriano, Cruziro,
and Malliavin in \cite{TgtProcessFlow,TgtProcessDef}. In these works,
the authors develop the notion of a process tangent to Wiener space
defined to be an $\mathbb{R}^{d}$ valued semi-martingale $\xi$ satisfying
the It\^{o} equation $d\xi_{i}\left(t\right)=A_{ij}dx_{j}\left(t\right)+B_{j}dt$
where the anti-symmetric matrix coefficients $A_{ij}$ are semi-martingales
which also have a Stratonovich representation. An associated flow
is also constructed and shown to have a push forward measure which
is absolutely continuous with respect to the Wiener measure with density
in $L^{p}\left(d\mu\right)$ for all $p\geq1$. Of particular interest
is the fact that a tangent process has a representation as the solution
to the equation $\eta_{t}=\int_{0}^{t}\eta_{t}dx\left(t\right)+\gamma dt$.
In some sense, this is related to the information contained in cross
iterated integrals of a process with the Brownian path together with
some additive function. This information is contained in our definition
of the tangent in a deterministic way. 

Yet another approach to defining derivatives of functionals on the
Wiener space is considered in \cite{RotationVar}. Instead of considering
the variation $F\left(x+th\right)$, the authors consider a class
of measure preserving transformations $T_{t}$ giving associated variation
$F\left(T_{t}x+th\right)$ and construct a solution to the related
flow equation. 

We stress that in the above non-rough path approaches, the vector
fields are limited to a subspace of the Wiener space (usually the
Cameron-Martin space) and the flow is defined for almost every (with
respect to Wiener measure) element of continuous path space. In our
definition we provide a much larger class of tangent directions which
is defined point wise without reference to a measure. In fact, given
any element of the same rough path space, there are infinitely many
tangent directions which in some sense are variations in the direction
of the given rough path.

The paper is organized as follows. In section 2 we formally present
the machinery we require from rough path theory with the exception
of the Lyons-Victoir extension theorem which is instead discussed
in the appendix. Section 3 is devoted to the definition and properties
of the tangent space while the final section 4 covers the construction
of the local and global flows.

\section{Preliminaries}

Let us discuss the tools we will need from rough path theory. Historically
the theory of rough paths was developed as an approach to making deterministic
sense of differential equations of the type\begin{align*}
dx_{t}= & F\left(x_{t}\right)dy_{t}\\
x_{0}= & \xi\end{align*}
where the path $y$ is very irregular in time parameter $t$. In the
case that $y$ is not differentiable, solutions $x$ must interpreted
as an integral \begin{align*}
x_{\cdot}= & x_{0}+\int_{0}^{\cdot}F\left(x_{t}\right)dy_{t}\end{align*}
and if $y$ is regular enough to have bounded variation, then ODE
theory tells us that, for Lipschitz $F$, the above equation has a
bounded variation solution. Rough path theory provides an extension
of the classical theory significantly beyond bounded variation (Young
integration can be seen as a simpler extension). In order to go farther,
more information than the path increments is required to define the
integral. And this was conjectured in some sense by F\"{o}llmer (see
the history section of Lyon's original paper \cite{LyonsIbero}).
More specifically, he guessed that knowing the increment and Levy
area would be sufficient to solve stochastic differential equations.
In fact, Lyons showed that a deterministic approach using just the
increments is not possible. With this in mind and the fact that higher
order iterated integrals contain area information, we want to be able
to define the iterated integrals \begin{align*}
\int_{s<t_{1}<\cdots<t_{n}<t}dY_{t_{1}}\otimes\cdots\otimes dY_{t_{n}}\end{align*}
for paths of finite $p$-variation.

Notice that if $y$ has finite variation, then the first iterated
integral is just the increment $y_{t}-y_{s}$ and the second iterated
integral is $\lim_{m\left(\mathcal{D}\right)\rightarrow0}\sum_{l}\left(y_{t_{l}}-y_{s}\right)\otimes\left(y_{t}-y_{t_{l}}\right)$
so in this case, the higher order term is determined by the increment.
Also, since up to this point we don't have an integration theory for
rough paths, such objects are not defined for $p$-variation paths
for $p\geq2$. Instead, we consider a $p$-rough path as an object
in $\oplus_{i=i}^{\left\lfloor p\right\rfloor }V^{\otimes i}$ where
the element in $V^{\otimes i}$ behaves algebraically like an $i$th
order iterated integral and satisfies a finite $p$-variation condition.
The reason we consider elements only up to order $\left\lfloor p\right\rfloor $
is that for any $p$-rough path, there exists a unique extension to
$\oplus_{i=i}^{\infty}V^{\otimes i}$ which has finite $p$-variation
(see for example the extension theorem 3.1.2 in \cite{SysControlRP}).

Let us now give the formal framework for considering path increments
as well as higher order elements.
\begin{defn}
Let $V$ be a Banach space and $T^{n}\left(V\right)=\oplus_{i=0}^{n}V^{\otimes i}$
be its tensor powers. A continuous function $X:\Delta_{T}\rightarrow T^{n}\left(V\right)$
denoted at each pair $\left(s,t\right)$ by \begin{align*}
X_{s,t}= & \left(X_{s,t}^{0},\ldots,X_{s,t}^{n}\right)\end{align*}
is said to be a multiplicative functional of degree $n$ in $V$ if
\begin{align}
X_{s,t}\otimes X_{t,u}= & X_{s,u}\label{eq:Chen}\end{align}
for all $s,t,u\in\left[0,T\right]$ satisfying $s<t<u$, where the
tensor product is taken in $T^{n}\left(V\right)$.
\end{defn}
The algebraic condition (\ref{eq:Chen}), referred to throughout as
Chen's identity, captures the additivity property of integrals over
regions. Indeed, for $x$, a path with bounded variation, letting
$X_{s,u}^{2}=\int_{s<t_{1}<t_{2}<u}dx_{t_{1}}\otimes dx_{t_{2}}$,
we have \begin{align*}
\int_{s<t_{1}<t_{2}<u}dx_{t_{1}}\otimes dx_{t_{2}}= & \int_{s}^{t}\left(x_{t_{2}}-x_{s}\right)\otimes dx_{t_{2}}+\int_{t}^{u}\left(x_{t_{2}}-x_{t}\right)\otimes dx_{t_{2}}\\
 & +\int_{t}^{u}\left(x_{t}-x_{s}\right)\otimes dx_{t_{2}}\\
= & \int_{s<t_{1}<t_{2}<t}dx_{t_{1}}\otimes dx_{t_{2}}+\int_{t<t_{1}<t_{2}<u}dx_{t_{1}}\otimes dx_{t_{2}}\\
 & +\left(x_{t}-x_{s}\right)\otimes\left(x_{u}-x_{t}\right)\\
= & X_{s,t}^{2}+X_{t,u}^{2}+X_{s,t}^{1}\otimes X_{t,u}^{1}\\
= & X_{s,t}\otimes X_{t,u}.\end{align*}
Finally, we form a $p$-rough path by the imposition of the analytic
finite $p$-variation condition on $X$. 
\begin{defn}
A $p$-rough path in $V$ is a multiplicative functional of degree
$\left\lfloor p\right\rfloor $ in $V$ with finite $p$-variation,
i.e.\begin{align*}
\sup_{\mathcal{D}\subseteq\left[0,T\right]}\sum_{\mathcal{D}}\left\Vert X_{t_{l},t_{l+1}}^{i}\right\Vert _{V}^{\frac{p}{i}}< & \infty\end{align*}
for each $i\in\left\{ 1,\ldots,\left\lfloor p\right\rfloor \right\} $.
The space of all $p$-rough paths in $V$ is denoted by $\Omega_{p}\left(V\right)$
and can be equipped with the distance \begin{align*}
d_{p}\left(X,Y\right):= & \max_{i\in\left\{ 1,\ldots,\left\lfloor p\right\rfloor \right\} }\sup_{\mathcal{D}\subseteq\left[0,T\right]}\left(\sum_{\mathcal{D}}\left\Vert X_{t_{l},t_{l+1}}^{i}\right\Vert _{V}^{\frac{p}{i}}\right)^{\frac{i}{p}}\end{align*}
in which case it is a complete metric space. 
\end{defn}
There are two special spaces of rough paths. The $p$-rough paths
which are the limit, in $p$-variation distance, of a sequence of
smooth rough paths are called geometric $p$-rough paths and denoted
$G\Omega_{p}\left(V\right)$. And the elements of $\Omega_{p}\left(V\right)$
that can be realized as the limit in uniform topology of the canonical
lifts of bounded $p$-variation smooth paths are called weakly geometric
$p$-rough paths and are denoted $WG\Omega_{p}\left(V\right)$. The
strict inclusions\[
G\Omega_{p}\left(V\right)\subset WG\Omega_{p}\left(V\right)\subset\Omega_{p}\left(V\right)\]
hold. 

The following is standard and shows that up to reparameterization,
$p$-rough paths are closely related to $\frac{1}{p}$-H\"{o}lder
continuous paths.
\begin{prop}
\label{pro:HolderCont}Let $X\in\Omega_{p}\left(V\right)$ and assume
$X^{1}$ is not zero on any interval. Also define $\tau:\left[0,T\right]\rightarrow\mathbb{R}^{+}$
by \[
\tau\left(t\right)=\frac{\omega\left(t\right)T}{\omega\left(T\right)}\]
 where $\omega\left(t\right)$ is the $p$th power of the $p$-variation
of $X$ up to time $t$ on the path level, that is\[
\omega\left(t\right)=\sum_{i=1}^{2}\sup_{\mathcal{D}\subseteq\left[0,t\right]}\sum_{\mathcal{D}}\left|X_{t_{l-1}t_{l}}^{i}\right|^{\frac{p}{i}}.\]
Then\begin{align*}
\left|X_{\tau^{-1}\left(s\right),\tau^{-1}\left(t\right)}^{i}\right| & \leq\left(\frac{\omega\left(T\right)}{T}\right)^{\frac{i}{p}}\left(t-s\right)^{\frac{i}{p}}.\end{align*}
\end{prop}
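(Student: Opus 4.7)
My plan is to exploit the fact that $\omega$ is (essentially) a control function for $X$, reduce the proposition to the inequality $|X^i_{s,t}|^{p/i}\le \omega(t)-\omega(s)$, and then read off the Hölder estimate from the linear form of $\tau$.

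\textbf{Setting up the control.} First I would introduce the two-variable functions
\[
\omega_i(s,t):=\sup_{\mathcal{D}\subseteq[s,t]}\sum_{\mathcal{D}}\bigl|X^i_{t_{l-1},t_l}\bigr|^{p/i},\qquad i=1,2,
\]
so that $\omega(t)=\omega_1(0,t)+\omega_2(0,t)$. Two standard properties follow from the definition: (i) taking the trivial partition, $|X^i_{s,t}|^{p/i}\le \omega_i(s,t)$, and (ii) concatenation of partitions gives superadditivity, $\omega_i(s,t)+\omega_i(t,u)\le \omega_i(s,u)$. Combining (i), (ii) and summing over $i$ yields the key estimate
\[
\bigl|X^i_{s,t}\bigr|^{p/i}\le \omega_i(s,t)\le \omega(t)-\omega(s)\qquad(s<t,\ i=1,2).
\]

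\textbf{Continuity and invertibility of $\tau$.} To make sense of $\tau^{-1}$ I would next verify that $\omega$ is continuous and strictly increasing on $[0,T]$. Continuity of $t\mapsto \omega_i(0,t)$ is standard for $p$-variation functionals (the standard argument uses that $X$ itself is continuous, so the contribution of small sub-intervals to the $p$-variation sum can be made arbitrarily small); strict monotonicity follows from the hypothesis that $X^1$ is not identically zero on any subinterval, which forces $\omega_1(0,\cdot)$ to strictly increase. Consequently $\tau(t)=\omega(t)T/\omega(T)$ is a continuous strictly increasing bijection of $[0,T]$ onto itself, with a well-defined continuous inverse $\tau^{-1}$.

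\textbf{Conclusion via the linear form of $\tau$.} For $0\le s<t\le T$ set $s':=\tau^{-1}(s)$ and $t':=\tau^{-1}(t)$. By the definition of $\tau$,
\[
\omega(t')-\omega(s')=\frac{\omega(T)}{T}\,(t-s).
\]
Plugging this into the key estimate from the first step,
\[
\bigl|X^i_{\tau^{-1}(s),\tau^{-1}(t)}\bigr|^{p/i}\le \omega(t')-\omega(s')=\frac{\omega(T)}{T}(t-s),
\]
and taking $i/p$ powers gives the claimed bound $|X^i_{\tau^{-1}(s),\tau^{-1}(t)}|\le (\omega(T)/T)^{i/p}(t-s)^{i/p}$.

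\textbf{Main obstacle.} The only nontrivial point is the continuity of $\omega$, which is needed to guarantee that $\tau^{-1}$ exists as a function on all of $[0,T]$; the rest of the argument is a direct consequence of the control-function inequality and the design of $\tau$. Since continuity of $p$-variation in an endpoint for continuous rough paths is standard, the proposition follows in a few lines once that is in hand.
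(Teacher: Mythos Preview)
Your argument is correct and is essentially the same as the paper's: both reduce to the inequality $|X^i_{s,t}|^{p/i}\le \omega(t)-\omega(s)$ via superadditivity of the $p$-variation functional, then invert the linear reparametrization $\tau$ to obtain the H\"older bound. You are actually more careful than the paper in justifying the continuity and strict monotonicity of $\omega$ (and hence the existence of $\tau^{-1}$), which the paper takes for granted.
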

\begin{proof}
We have by the sub-additivity of $p$-variation over subintervals,
\begin{align*}
\left|X_{\tau^{-1}\left(s\right)\tau^{-1}\left(t\right)}^{i}\right|^{\frac{p}{i}} & \leq\sum_{i=1}^{2}\sup_{\mathcal{D}\subseteq\left[\tau^{-1}\left(s\right),\tau^{-1}\left(t\right)\right]}\sum_{\mathcal{D}}\left|X_{t_{l-1}t_{l}}^{i}\right|^{\frac{p}{i}}\\
 & \leq\sum_{i=1}^{2}\sup_{\mathcal{D}\subseteq\left[0,\tau^{-1}\left(t\right)\right]}\sum_{\mathcal{D}}\left|X_{t_{l-1}t_{l}}^{i}\right|^{\frac{p}{i}}-\sum_{i=1}^{2}\sup_{\mathcal{D}\subseteq\left[0,\tau^{-1}\left(s\right)\right]}\sum_{\mathcal{D}}\left|X_{t_{l-1}t_{l}}^{i}\right|^{\frac{p}{i}}\\
 & =\omega\left(\tau^{-1}\left(t\right)\right)-\omega\left(\tau^{-1}\left(s\right)\right).\end{align*}
Then by the definition of $\tau$, \begin{align*}
\omega\left(t\right)= & \frac{\omega\left(T\right)}{T}\tau\left(t\right)\end{align*}
so plugging in $\tau^{-1}$, \begin{align*}
\omega\left(\tau^{-1}\left(t\right)\right)= & \frac{\omega\left(T\right)}{T}t\end{align*}
and hence \begin{align*}
\left|X_{\tau^{-1}\left(s\right)\tau^{-1}\left(t\right)}^{i}\right|^{\frac{p}{i}}\leq & \frac{\omega\left(T\right)}{T}\left(t-s\right).\end{align*}
Finally, taking the $p$th root gives the result.
\end{proof}
The next bound will be used to prove the well known extrinsic compactness
result for sets in $\Omega_{p}\left(V\right)$. 
\begin{lem}
Let $X,Y\in\Omega_{p}\left(V\right)$. Then for $q>p$ we have the
following bound \begin{equation}
d_{q}\left(X,Y\right)\leq Cd_{p}\left(X,Y\right)^{\frac{p}{q}}.\label{eq:PVarBoundsqVar}\end{equation}
where \begin{align*}
C= & \max\left\{ \left(2\sup_{t}\left|X_{0,t}^{1}-Y_{0,t}^{1}\right|\right)^{\frac{q-p}{q}},\right.\\
 & \left.\left(2\sup_{t}\left|X_{0,t}^{2}-Y_{0,t}^{2}\right|\left(1+2\left(\sup_{t}\left|X_{0,t}^{1}\right|+\sup_{t}\left|Y_{0,t}^{1}\right|\right)\right)\right)^{\frac{q-p}{q}}\right\} \end{align*}
\end{lem}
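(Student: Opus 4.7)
My plan is to prove the bound level by level using an elementary interpolation inequality. Set $\xi^i_{s,t} := X^i_{s,t} - Y^i_{s,t}$ for $i=1,2$. Since $q/i = (q-p)/i + p/i$, factoring out the largest summand yields
\[
\sum_{\mathcal{D}} \left|\xi^i_{t_{l-1},t_l}\right|^{q/i} \leq \left(\sup_{s < t}\left|\xi^i_{s,t}\right|\right)^{(q-p)/i} \sum_{\mathcal{D}} \left|\xi^i_{t_{l-1},t_l}\right|^{p/i}
\]
for every partition $\mathcal{D}$ of $[0,T]$. The last sum is bounded above by $d_p(X,Y)^{p/i}$ directly from the definition of $d_p$, so taking the $(i/q)$-th power yields
\[
\left(\sum_{\mathcal{D}}\left|\xi^i_{t_{l-1},t_l}\right|^{q/i}\right)^{i/q} \leq \left(\sup_{s<t}\left|\xi^i_{s,t}\right|\right)^{(q-p)/q} d_p(X,Y)^{p/q}.
\]
The whole task thus reduces to controlling $\sup_{s<t}|\xi^i_{s,t}|$ by the quantities appearing in the definition of $C$.

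For $i=1$ this is immediate, since $\xi^1_{s,t} = (X^1_{0,t} - Y^1_{0,t}) - (X^1_{0,s} - Y^1_{0,s})$ gives by the triangle inequality $\sup_{s<t}|\xi^1_{s,t}| \leq 2\sup_t|X^1_{0,t} - Y^1_{0,t}|$, which matches the first entry in the max defining $C$. For $i=2$, I would apply Chen's identity to both $X$ and $Y$ to write
\[
X^2_{s,t} = X^2_{0,t} - X^2_{0,s} - X^1_{0,s} \otimes X^1_{s,t},
\]
and the analogous equation for $Y$, then subtract. Adding and subtracting the cross-term $X^1_{0,s} \otimes Y^1_{s,t}$ produces the clean decomposition
\[
\xi^2_{s,t} = (X^2_{0,t} - Y^2_{0,t}) - (X^2_{0,s} - Y^2_{0,s}) - X^1_{0,s} \otimes \xi^1_{s,t} - \xi^1_{0,s} \otimes Y^1_{s,t}.
\]
Applying the triangle inequality and the crude estimates $|Y^1_{s,t}| \leq 2\sup_t|Y^1_{0,t}|$ and $|X^1_{0,s}| \leq \sup_t|X^1_{0,t}|$, combined with the $i=1$ bound on $\sup|\xi^1|$, produces precisely the second entry in the maximum defining $C$.

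Finally, taking the maximum over $i \in \{1,2\}$ in the displayed interpolation estimate combines the two level bounds into $d_q(X,Y) \leq C\, d_p(X,Y)^{p/q}$. The only non-routine step is the algebraic manipulation of Chen's identity at level two, since levels are coupled through the increment $X^1_{0,s}$. I do not anticipate any serious obstacle beyond careful bookkeeping in that rearrangement; everything else is a mechanical application of the interpolation inequality together with the triangle inequality.
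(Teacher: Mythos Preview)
Your proposal is correct and follows essentially the same argument as the paper: the interpolation step $|\xi^i|^{q/i} \le (\sup|\xi^i|)^{(q-p)/i}|\xi^i|^{p/i}$, followed by Chen's identity to bound $\sup_{s,t}|\xi^i_{s,t}|$ in terms of one-parameter suprema, with the same add-and-subtract cross-term trick at level two (the paper inserts $Y^1_{0,s}\otimes X^1_{s,t}$ rather than your $X^1_{0,s}\otimes Y^1_{s,t}$, but this is immaterial). One small caveat: your level-two estimate yields $2\sup_t|X^2_{0,t}-Y^2_{0,t}| + 2\sup_t|\xi^1_{0,t}|\bigl(\sup_t|X^1_{0,t}|+\sup_t|Y^1_{0,t}|\bigr)$, which does not literally equal the second entry of $C$ as written in the statement (that entry appears to contain a typo, replacing $\sup|\xi^1|$ by $\sup|X^2-Y^2|$); the paper's own proof arrives at the same expression you do, so your claim that it matches ``precisely'' should be softened.
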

\begin{proof}
First we make the elementary estimate \begin{align*}
 & \sup_{\mathcal{D}}\left(\sum_{l}\left|X_{t_{l}t_{l+1}}^{i}-Y_{t_{l}t_{l+1}}^{i}\right|^{\frac{q}{i}}\right)^{\frac{i}{q}}\\
 & =\sup_{\mathcal{D}}\left(\sum_{l}\left|X_{t_{l}t_{l+1}}^{i}-Y_{t_{l}t_{l+1}}^{i}\right|^{\frac{q-p}{i}}\left|X_{t_{l}t_{l+1}}^{i}-Y_{t_{l}t_{l+1}}^{i}\right|^{\frac{p}{i}}\right)^{\frac{i}{q}}\\
 & \leq\left(\sup_{s,t\in\Delta_{T}}\left|X_{s,t}^{i}-Y_{s,t}^{i}\right|\right)^{\frac{q-p}{q}}\sup_{\mathcal{D}}\left(\sum_{l}\left|X_{t_{l}t_{l+1}}^{i}-Y_{t_{l}t_{l+1}}^{i}\right|^{\frac{p}{i}}\right)^{\frac{i}{q}}.\end{align*}
Then, by Chen's identity,\begin{align*}
\left|X_{s,t}^{1}-Y_{s,t}^{1}\right| & =\left|X_{0,t}^{1}-X_{0,s}^{1}-\left(Y_{0,t}^{1}-Y_{0s}^{1}\right)\right|\\
 & \leq2\sup_{t}\left|X_{0,t}^{1}-Y_{0,t}^{1}\right|\end{align*}
and \begin{align*}
\left|X_{s,t}^{2}-Y_{s,t}^{2}\right| & =\left|X_{0,t}^{2}-X_{0,s}^{2}-X_{0,s}^{1}\otimes X_{s,t}^{1}-\left(Y_{0,t}^{2}-Y_{0,s}^{2}-Y_{0,s}^{1}\otimes Y_{s,t}^{1}\right)\right|\\
 & \leq2\sup_{t}\left|X_{0,t}^{2}-Y_{0,t}^{2}\right|+\left|\left(X_{0,s}^{1}-Y_{0,s}^{1}\right)\otimes X_{s,t}^{1}+Y_{0,s}^{1}\otimes\left(X_{s,t}^{1}-Y_{s,t}^{1}\right)\right|\\
 & \leq2\sup_{t}\left|X_{0,t}^{2}-Y_{0,t}^{2}\right|+4\sup_{t}\left|X_{0,t}^{1}-Y_{0,t}^{1}\right|\left(\sup_{t}\left|X_{0,t}^{1}\right|+\sup_{t}\left|Y_{0,t}^{1}\right|\right)\end{align*}
which gives the result.
\end{proof}
We will use the following compactness result in the proof of the existence
of flow equation solutions on the space of rough paths. The fact that
it is not intrinsic is the source of the awkward fact that our solution
of a flow equation on $WG\Omega_{p}$ lives in $\Omega_{q}$.
\begin{thm}
\label{thm:ExtrinCompac}If $V$ is finite dimensional then any $\mathcal{A}\subset\Omega_{p}\left(V\right)$
satisfying \[
\sup_{\mathcal{A}}d_{p}\left(0,X\right)\leq M\]
is relatively compact.\end{thm}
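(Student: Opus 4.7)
The plan is to extract from any sequence $\{X_n\}\subset\mathcal{A}$ a subsequence converging in $d_q$ for arbitrary fixed $q>p$, through three steps: reparameterise each $X_n$ to obtain uniform H\"older control, apply Arzel\`a--Ascoli in the uniform topology, and interpolate back up to $d_q$-convergence via the preceding lemma.

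First, I would apply Proposition \ref{pro:HolderCont} to each $X_n$, producing nondecreasing continuous surjections $\tau_n:[0,T]\to[0,T]$ such that the reparameterised functionals $\tilde{X}_n := X_n\circ(\tau_n^{-1},\tau_n^{-1})$ satisfy $|(\tilde{X}_n)^i_{s,t}|\leq C_M(t-s)^{i/p}$ for $i=1,2$, with $C_M$ depending only on $M$. The technical non-vanishing hypothesis in Proposition \ref{pro:HolderCont} can be bypassed by adding $\varepsilon t$ to the control $\omega$ and letting $\varepsilon\downarrow 0$. Since $V$ is finite-dimensional, the family $\{\tilde{X}_n\}$ is uniformly bounded and equicontinuous on $\Delta_T$; Arzel\`a--Ascoli then produces a subsequence with $\tilde{X}_n\to\tilde{X}$ uniformly, and the limit $\tilde{X}$ inherits Chen's identity and the same H\"older estimate by continuity of tensor multiplication.

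Next, since each $\tau_n$ is nondecreasing on $[0,T]$ with values in $[0,T]$, Helly's selection theorem extracts a further subsequence with $\tau_n\to\tau$ pointwise to some nondecreasing $\tau$. I define the candidate limit by $X_{s,t}:=\tilde{X}_{\tau(s),\tau(t)}$; the uniform continuity of $\tilde{X}$ ensures this is well-defined even where $\tau$ has jumps or flat intervals. Combining the uniform convergence $\tilde{X}_n\to\tilde{X}$ with the pointwise convergence $\tau_n\to\tau$ and the H\"older regularity of $\tilde{X}$ yields uniform convergence $X_n\to X$ on $\Delta_T$. Lower semi-continuity of $p$-variation under uniform convergence gives $X\in\Omega_p(V)$ with $d_p(0,X)\leq M$, so in particular $d_p(X_n,X)\leq 2M$. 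Finally, with uniform convergence in hand, I apply the estimate (\ref{eq:PVarBoundsqVar}): the constant $C$ there is a power of the uniform distance between $X_n$ and $X$ and so tends to zero, while $d_p(X_n,X)^{p/q}$ stays bounded. Hence $d_q(X_n,X)\to 0$, establishing relative compactness in the $d_q$-metric for every $q>p$.

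The main obstacle is the reconstruction step: the limit reparameterisation $\tau$ need not be continuous or strictly increasing, so defining $X$ consistently and upgrading pointwise to uniform convergence of $X_n\to X$ demands careful use of the uniform H\"older bound inherited from Proposition \ref{pro:HolderCont}. Once that is handled, the interpolation step via (\ref{eq:PVarBoundsqVar}) is immediate, and it is also the ingredient that forces the compactness to be extrinsic, i.e.\ in $d_q$ rather than in $d_p$.
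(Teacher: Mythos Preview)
Your approach is more careful than the paper's, which simply cites Proposition~\ref{pro:HolderCont} to assert $|X^1_{s,t}|\le C|t-s|^{1/p}$ and then runs Arzel\`a--Ascoli directly on the original one-parameter paths $t\mapsto X^i_{0,t}$; the paper never acknowledges that this H\"older bound is only available after a reparametrisation depending on the individual $X$. You correctly see that each $X_n$ needs its own $\tau_n$, and you attempt to reassemble the original sequence via Helly's theorem. That is the right diagnosis, but the step you flag as ``the main obstacle'' is in fact fatal under the stated hypotheses, not merely delicate.

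Helly gives only pointwise convergence $\tau_n\to\tau$, and the H\"older bound on $\tilde X$ yields $|\tilde X_{0,\tau_n(t)}-\tilde X_{0,\tau(t)}|\le C|\tau_n(t)-\tau(t)|^{1/p}$, which goes to zero pointwise but not uniformly when $\tau$ has a jump. No refinement of this estimate rescues the argument, because the theorem as written is actually false: take $V=\mathbb{R}$, $T=1$, $(X_n)^1_{0,t}=(nt)\wedge 1$ with canonical second level. Each $X_n$ is a monotone ramp with $d_p(0,X_n)=1$, yet for $m>2n$ the partition $\{0,1/m,1\}$ gives
\[
\bigl|(X_n-X_m)^1_{0,1/m}\bigr|^q+\bigl|(X_n-X_m)^1_{1/m,1}\bigr|^q=2(1-n/m)^q>2^{1-q},
\]
so $d_q(X_n,X_m)\ge 2^{1/q-1}$ and no subsequence is Cauchy in $\Omega_q$. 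In this example $\tau_n(t)\to \mathbf{1}_{\{t>0\}}$ and the uniform-convergence step you hope to salvage genuinely fails. Both the paper's argument and yours implicitly require a common modulus of continuity across $\mathcal A$ (equivalently a single control $\omega$, or a uniform $1/p$-H\"older assumption); with that extra hypothesis the reparametrisation is the same for every path, Helly becomes unnecessary, and the paper's direct Arzel\`a--Ascoli argument goes through verbatim.
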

\begin{proof}
Consider the family of paths in $V$ given by $\left\{ X_{0,\cdot}^{1}:X\in\mathcal{A}\right\} $.
The uniformly bounded $p$-variation implies\[
\sup_{\mathcal{A}}\left|X_{s,t}^{1}\right|\leq M\]
so that because $V$ is finite dimensional, for fixed $t$ $\left\{ X_{0,t}^{1}:X\in\mathcal{A}\right\} $
is relatively compact. Also, by proposition \ref{pro:HolderCont}
\begin{align*}
\left|X_{0,t}^{1}-X_{0,s}^{1}\right| & =\left|X_{s,t}^{1}\right|\\
 & \leq C\left|t-s\right|^{\frac{1}{p}}\end{align*}
where the constant depends only on the length of the time interval
and the total $p$-variation of $X$. So we have shown equicontinuity.
Hence, the Ascoli-Arzela theorem implies that $\left\{ X_{0,\cdot}^{1}:X\in\mathcal{A}\right\} $
is relatively compact in the uniform topology on $C\left(\left[0,T\right],V\right)$. 

Now, let $X\left(n\right)$ be any sequence in $\mathcal{A}$. Then
by the above argument there exists a subsequence $X\left(n_{k}\right)$
such that $X_{0,\cdot}^{1}\left(n_{k}\right)$ converges in the uniform
norm. Next consider the family of paths in $V^{\otimes2}$ given by
$\left\{ X_{0,\cdot}^{2}\left(n_{k}\right)\right\} $. Again we have
pointwise relative compactness in $V^{\otimes2}$ due to finite dimensionality
and uniformly bounded $p$-variation. Also, just as above, we apply
Chen's identity and proposition \ref{pro:HolderCont} to arrive at
\begin{align*}
\left|X_{0,t}^{2}\left(n_{k}\right)-X_{0,s}^{2}\left(n_{k}\right)\right| & =\left|X_{s,t}^{2}\left(n_{k}\right)+X_{0,s}^{1}\left(n_{k}\right)\otimes X_{s,t}^{1}\left(n_{k}\right)\right|\\
 & \leq\left|X_{s,t}^{2}\left(n_{k}\right)\right|+\left|X_{0,s}^{1}\left(n_{k}\right)\right|\left|X_{s,t}^{1}\left(n_{k}\right)\right|\\
 & \leq C\left[\left(t-s\right)^{\frac{2}{p}}+s^{\frac{1}{p}}\left(t-s\right)^{\frac{1}{p}}\right]\end{align*}
for $C$ independent of $k$. Then we have equicontinuity of the paths
$X_{0,\cdot}^{2}$ as \[
\left|t-s\right|\leq\min\left\{ \left(\frac{\varepsilon}{2C}\right)^{\frac{p}{2}},\left(\frac{\varepsilon}{2CT^{\frac{1}{p}}}\right)^{p}\right\} \]
implies $\left|X_{0,t}^{2}\left(n_{k}\right)-X_{0,s}^{2}\left(n_{k}\right)\right|\leq\varepsilon$.
Therefore, after a second application of the Ascoli-Arzela theorem
we can extract a further subsequence such that both $X_{0,\cdot}^{1}\left(n_{k_{l}}\right)$
and $X_{0,\cdot}^{2}\left(n_{k_{l}}\right)$ converge in the uniform
norm on $V$ and $V^{\otimes2}$ respectively.

Finally, consider any sequence in $\mathcal{A}$. The above allows
us to extract a subsequence such that each $X_{0,\cdot}^{i}\left(l\right)$
converges in uniform norm $V^{\otimes i}.$ Then the bound (\ref{eq:PVarBoundsqVar})
together with the uniformly bounded $p$-variation implies the subsequence
is also Cauchy in $\Omega_{q}\left(V\right)$. Therefore completeness
of $\Omega_{q}\left(V\right)$ gives the result.
\end{proof}
Let us now introduce some notation and basic operations on rough paths.
We will often consider rough paths which at the first tensor level
consist of a pair of rough paths, i.e. rough paths in $V\oplus W$,
so we use the following notation for the component parts of such rough
paths $Z$. We use \begin{align*}
Z^{1}= & \left(\pi_{1}\left(Z\right)^{1},\pi_{2}\left(Z\right)^{1}\right)\\
Z^{2}= & \left(\begin{array}{cc}
\pi_{1}\left(Z\right)^{2} & \pi_{1,2}\left(Z\right)\\
\pi_{2,1}\left(Z\right) & \pi_{2}\left(Z\right)^{2}\end{array}\right)\end{align*}
due to the decomposition\[
T^{2}\left(V\oplus W\right)=1\oplus\left(V\oplus W\right)\oplus\left(V^{\otimes2}\oplus\left(V\otimes W\right)\oplus\left(W\otimes V\right)\oplus W^{\otimes2}\right),\]
and may also denote $Z$ by $\left(\pi_{1}\left(Z\right),\pi_{2}\left(Z\right)\right)$where
the projections \begin{align*}
\pi_{1}\left(Z\right)= & \left(1,\pi_{1}\left(Z\right)^{1},\pi_{1}\left(Z\right)^{2}\right)\\
\pi_{2}\left(Z\right)= & \left(1,\pi_{2}\left(Z\right)^{1},\pi_{2}\left(Z\right)^{2}\right)\end{align*}
are $p$-rough paths in $V$ and $W$ respectively. In this case,
by identifying $\oplus$ with $+$ when the objects are in the same
space, Chen's identity (\ref{eq:Chen}) is equivalent to \begin{align*}
\pi_{i}\left(Z\right)_{s,u}^{1}= & \pi_{i}\left(Z\right)_{s,t}^{1}+\pi_{i}\left(Z\right)_{t,u}^{1}\\
\pi_{i}\left(Z\right)_{s,u}^{2}= & \pi_{i}\left(Z\right)_{s,t}^{2}+\pi_{i}\left(Z\right)_{t,u}^{2}+\pi_{i}\left(Z\right)_{s,t}^{1}\otimes\pi_{i}\left(Z\right)_{t,u}^{1}\\
\pi_{ij}\left(Z\right))_{s,u}= & \pi_{ij}\left(Z\right))_{s,t}+\pi_{ij}\left(Z\right))_{t,u}+\pi_{i}\left(Z\right)_{s,t}^{1}\otimes\pi_{j}\left(Z\right)_{t,u}^{1}.\end{align*}
If we know two rough paths $X$ and $Y$ as a single rough path $Z$
in $V\oplus V$, i.e. in some sense we know their cross iterated integrals,
then we may add the projections of $Z$ in the following sense. 
\begin{prop}
Let $Z=\left(X,Y\right)$ be a $p$-rough path in $V\oplus V$. Then
\[
\left(1,X^{1}+Y^{1},X^{2}+\pi_{1,2}\left(Z\right)+\pi_{2,1}\left(Z\right)+Y^{2}\right)\]
is a $p$-rough path in $V$. \end{prop}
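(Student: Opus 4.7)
The plan is to verify the two defining properties of a $p$-rough path in $V$: Chen's identity and finite $p$-variation at each level. Since $Z = (X,Y)$ is given to be a $p$-rough path in $V \oplus V$, its components $X^1, Y^1, X^2, Y^2, \pi_{1,2}(Z), \pi_{2,1}(Z)$ automatically satisfy the compatibility relations listed right above the proposition, as well as the appropriate $p$-variation bounds.

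First, I would check Chen's identity. Write $W$ for the candidate rough path with $W^1_{s,t} = X^1_{s,t} + Y^1_{s,t}$ and $W^2_{s,t} = X^2_{s,t} + \pi_{1,2}(Z)_{s,t} + \pi_{2,1}(Z)_{s,t} + Y^2_{s,t}$. The level-one identity $W^1_{s,u} = W^1_{s,t} + W^1_{t,u}$ is immediate from the additivity of $X^1$ and $Y^1$. For level two, I would sum the four Chen-type identities
\begin{align*}
X^2_{s,u} &= X^2_{s,t} + X^2_{t,u} + X^1_{s,t} \otimes X^1_{t,u}, \\
\pi_{1,2}(Z)_{s,u} &= \pi_{1,2}(Z)_{s,t} + \pi_{1,2}(Z)_{t,u} + X^1_{s,t} \otimes Y^1_{t,u}, \\
\pi_{2,1}(Z)_{s,u} &= \pi_{2,1}(Z)_{s,t} + \pi_{2,1}(Z)_{t,u} + Y^1_{s,t} \otimes X^1_{t,u}, \\
Y^2_{s,u} &= Y^2_{s,t} + Y^2_{t,u} + Y^1_{s,t} \otimes Y^1_{t,u},
\end{align*}
and observe that the four cross-terms on the right collect, via bilinearity of $\otimes$, into $(X^1_{s,t} + Y^1_{s,t}) \otimes (X^1_{t,u} + Y^1_{t,u}) = W^1_{s,t} \otimes W^1_{t,u}$. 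This gives exactly $W^2_{s,u} = W^2_{s,t} + W^2_{t,u} + W^1_{s,t} \otimes W^1_{t,u}$, the required multiplicative relation in $T^2(V)$.

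For the analytic condition, I would use Minkowski's inequality for $\ell^q$-sums. At level one, $|W^1_{t_l,t_{l+1}}|^p \leq 2^{p-1}(|X^1_{t_l,t_{l+1}}|^p + |Y^1_{t_l,t_{l+1}}|^p)$ (or apply Minkowski directly to the $\ell^p$ norm of the partition sum), so that the finite $p$-variation of $X^1$ and $Y^1$, inherited from $Z$, transfers to $W^1$. At level two, the same reasoning with exponent $p/2$ applied to the four summands $X^2, \pi_{1,2}(Z), \pi_{2,1}(Z), Y^2$ shows finite $p/2$-variation of $W^2$, since each of those four objects has finite $p/2$-variation by the defining bound for $Z \in \Omega_p(V \oplus V)$.

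There is no serious obstacle here; the statement is essentially the observation that the projection $(a,b) \mapsto a + b$ on $V \oplus V$ lifts to a continuous tensor-algebra homomorphism $T^2(V \oplus V) \to T^2(V)$, and the proposition simply says that applying this homomorphism pointwise on $\Delta_T$ preserves both algebraic (Chen) and analytic ($p$-variation) conditions. The only mild care needed is to keep the cross terms $\pi_{1,2}$ and $\pi_{2,1}$ in their correct order, since the tensor product is not symmetric; this is why one must record both projections and not merely their sum.
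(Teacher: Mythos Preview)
Your proof is correct and follows the same approach as the paper, which dispatches the statement in a single line by citing Chen's identity for rough paths in $V\oplus V$. Your version simply unpacks that reference in full, and additionally makes explicit the easy $p$-variation estimate that the paper leaves implicit.
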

\begin{proof}
This is from Chen's identity for rough paths in $V\oplus V$ given
above in equation (\ref{eq:Chen}).
\end{proof}
Scalar multiplication is well defined for rough paths in the following
sense.
\begin{prop}
Let $X$ be a $p$-rough path in $V$ and for all $\lambda\in\mathbb{R}$
define $\lambda X$ by \begin{align*}
\left(\lambda X\right)^{1}= & \lambda X^{1}\\
\left(\lambda X\right)^{2}= & \lambda^{2}X^{2}.\end{align*}
 Then, $\lambda X$ is a $p$-rough path over $V$.\end{prop}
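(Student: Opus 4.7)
The claim is essentially a direct verification that the rescaled object satisfies the two defining properties of a $p$-rough path: Chen's identity and the finite $p$-variation condition. My plan is therefore to check each of these in turn, relying only on the corresponding properties for $X$ and on the specific scaling $\lambda^i$ at the $i$-th tensor level (which is forced precisely so that Chen's identity is preserved).

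First I would verify Chen's identity. At the first level this is trivial by linearity: $(\lambda X)^{1}_{s,u} = \lambda X^{1}_{s,u} = \lambda(X^{1}_{s,t} + X^{1}_{t,u}) = (\lambda X)^{1}_{s,t} + (\lambda X)^{1}_{t,u}$. At the second level the key observation is that the quadratic scaling $\lambda^{2}$ balances exactly the two factors of $\lambda$ appearing in the tensor cross term:
\begin{align*}
(\lambda X)^{2}_{s,u} = \lambda^{2} X^{2}_{s,u} &= \lambda^{2}\bigl(X^{2}_{s,t} + X^{2}_{t,u} + X^{1}_{s,t}\otimes X^{1}_{t,u}\bigr) \\
&= (\lambda X)^{2}_{s,t} + (\lambda X)^{2}_{t,u} + (\lambda X)^{1}_{s,t}\otimes(\lambda X)^{1}_{t,u},
\end{align*}
which is exactly Chen's identity for $\lambda X$.

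Next I would check the $p$-variation condition. For any partition $\mathcal{D}\subseteq[0,T]$, we have $\|(\lambda X)^{i}_{t_{l},t_{l+1}}\|^{p/i} = |\lambda|^{i\cdot(p/i)}\|X^{i}_{t_{l},t_{l+1}}\|^{p/i} = |\lambda|^{p}\|X^{i}_{t_{l},t_{l+1}}\|^{p/i}$ for $i=1,2$, so summing and taking the supremum gives the finiteness of the $p$-variation of $(\lambda X)^{i}$ from that of $X^{i}$. Continuity of $\lambda X$ as a map $\Delta_{T}\to T^{2}(V)$ is inherited from that of $X$ since multiplication by $\lambda$ (respectively $\lambda^{2}$) is continuous in each component.

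I do not expect any real obstacle here; the only conceptual point worth emphasizing is why the specific choice $(\lambda X)^{2}=\lambda^{2}X^{2}$ is the right one, namely that any other scaling would break Chen's identity. This also explains why the definition cannot be extended component-wise by naive linearity to the whole tensor algebra and foreshadows the more delicate tangent space construction to come.
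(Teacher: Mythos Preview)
Your proof is correct and takes essentially the same approach as the paper: the paper dismisses this as ``Immediate from Chen's identity and the properties of scalar multiplication on tensor product spaces,'' and you have simply written out that immediate verification in full.
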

\begin{proof}
Immediate from Chen's identity and the properties of scalar multiplication
on tensor product spaces.
\end{proof}

\section{Tangent Space Construction}

For a functional $f:\Omega_{p}\left(\mathbb{R}^{d}\right)\rightarrow\mathbb{R}$
one might try to define its derivative by considering $\left.\frac{d}{d\varepsilon}\right|_{\varepsilon=0}f\left(X+\varepsilon Y\right)$.
If we could define this object, it would give us the derivative of
$f$ at $X$ in the direction $Y$. However, as rough path space is
not linear, it is not clear how we should vary $X$ as addition of
rough paths does not make sense. Even with this fact, we can define
a kind of addition of $X$ and $Y$ if we have some information not
contained in just $X$ and $Y$. That is, if we know $Z\in\Omega_{p}\left(V\oplus V\right)$
with $\pi_{1}\left(Z\right)=X,$$\pi_{2}\left(Z\right)=Y$. Naturally,
there are many such $Z$ that give rise to $X$ and $Y$ through the
projections, but they give different {}``sums'' as the definition
of the {}``sum'' depends on cross iterated integrals or the projections
$\pi_{1,2}\left(Z\right)$ and $\pi_{2,1}\left(Z\right)$. This tells
us that the information in $Y$ alone is not sufficient to determine
a direction in rough path sense. Yet $Y$ together with the cross
iterated integrals of $X$ and $Y$ is enough to determine the sum
of $X$ and $Y$ in rough path sense. This suggests as a first step
we consider elements $Z$ of $\Omega_{p}\left(V\mathbb{\oplus}V\right)$
such that $\pi_{1}\left(Z\right)=X$. For such $Z$ we can {}``add''
$\pi_{1}\left(Z\right)$ and $\pi_{2}\left(Z\right)$ and form a new
rough path $K$ in the same space as each of the projections according
to the formula \begin{align}
K_{s,t}^{1}:= & \pi_{1}\left(Z\right)_{s,t}^{1}+\pi_{2}\left(Z\right)_{s,t}^{1}\label{eq:RPAdd1}\\
K_{s,t}^{2}:= & \pi_{1}\left(Z\right)_{s,t}^{2}+\pi_{2}\left(Z\right)_{s,t}^{2}+\pi_{12}\left(Z\right)_{s,t}+\pi_{21}\left(Z\right)_{s,t}\label{eq:RPAdd2}\end{align}
where \begin{align*}
Z_{s,t}= & \left(1,Z_{s,t}^{1},Z_{s,t}^{2}\right)\\
= & \left(1,\pi_{1}\left(Z\right)_{s,t}^{1},\pi_{2}\left(Z\right)_{s,t}^{1},\pi_{1}\left(Z\right)_{s,t}^{2},\pi_{12}\left(Z\right)_{s,t},\pi_{21}\left(Z\right)_{s,t},\pi_{2}\left(Z\right)_{s,t}^{2}\right).\end{align*}
Then, \begin{align*}
K_{s,t}^{1}\left(\varepsilon\right):= & \pi_{1}\left(Z\right)_{s,t}^{1}+\varepsilon\pi_{2}\left(Z\right)_{s,t}^{1}\\
K_{s,t}^{2}\left(\varepsilon\right):= & \pi_{1}\left(Z\right)_{s,t}^{2}+\varepsilon\left(\pi_{12}\left(Z\right)_{s,t}+\pi_{21}\left(Z\right)_{s,t}\right)+\varepsilon^{2}\pi_{2}\left(Z\right)_{s,t}^{2}\end{align*}
is the rough paths equivalent of adding $\varepsilon$ times a direction
to $X$. 
\begin{prop}
\label{pro:RPAdd}If $Z\in\Omega_{p}\left(V\oplus V\right)$ is of
the form\begin{align*}
Z_{s,t}^{1}= & \left(\pi_{1}\left(Z\right)^{1},\pi_{2}\left(Z\right)^{1}\right)\\
Z_{s,t}^{2}= & \left(\begin{array}{cc}
\pi_{1}\left(Z\right)_{s,t}^{2} & \pi_{12}\left(Z\right)_{s,t}\\
\pi_{21}\left(Z\right)_{s,t} & \pi_{2}\left(Z\right)_{s,t}^{2}\end{array}\right)\end{align*}
then the path $V_{Z}\left(\varepsilon\right)$ defined by\begin{align*}
V_{Z}\left(\varepsilon\right)_{s,t}^{1}= & \left(\pi_{1}\left(Z\right)_{s,t}^{1},\varepsilon\pi_{2}\left(Z\right)_{s,t}^{1}\right)\\
V_{Z}\left(\varepsilon\right)_{s,t}^{2}= & \left(\begin{array}{cc}
\pi_{1}\left(Z\right)_{s,t}^{2} & \varepsilon\pi_{12}\left(Z\right)_{s,t}\\
\varepsilon\pi_{21}\left(Z\right)_{s,t} & \varepsilon^{2}\pi_{2}\left(Z\right)_{s,t}^{2}\end{array}\right)\end{align*}
is also in $\Omega_{p}\left(V\oplus V\right)$. In particular, this
implies the function $V\left(\varepsilon\right)$ defined by \begin{align*}
V\left(\varepsilon\right)_{s,t}^{1}= & \pi_{1}\left(Z\right)_{s,t}^{1}+\varepsilon\pi_{2}\left(Z\right)_{s,t}^{1}\\
V\left(\varepsilon\right)_{s,t}^{2}= & \pi_{1}\left(Z\right)_{s,t}^{2}+\varepsilon\pi_{12}\left(Z\right)_{s,t}+\varepsilon\pi_{21}\left(Z\right)_{s,t}+\varepsilon^{2}\pi_{2}\left(Z\right)_{s,t}^{2}\end{align*}
is in $\Omega_{p}\left(V\right)$.\end{prop}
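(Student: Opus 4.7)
The plan is to verify the two defining conditions for a $p$-rough path, namely Chen's identity and finite $p$-variation, for $V_Z(\varepsilon)$, and then deduce the statement about $V(\varepsilon)$ from the preceding proposition about summing projections.

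First I would check Chen's identity componentwise. Since $Z \in \Omega_p(V\oplus V)$, Chen's identity already gives us the six scalar identities collected at the end of the preliminaries section: the additivity of the two first-level components $\pi_i(Z)^1$, and the three second-level identities
\begin{align*}
\pi_i(Z)^2_{s,u} &= \pi_i(Z)^2_{s,t} + \pi_i(Z)^2_{t,u} + \pi_i(Z)^1_{s,t}\otimes \pi_i(Z)^1_{t,u},\\
\pi_{ij}(Z)_{s,u} &= \pi_{ij}(Z)_{s,t} + \pi_{ij}(Z)_{t,u} + \pi_i(Z)^1_{s,t}\otimes \pi_j(Z)^1_{t,u}.
\end{align*}
The claim is then obtained by matching powers of $\varepsilon$: multiplying the $\pi_2$-identity by $\varepsilon^2$ matches the $\varepsilon^2 \pi_2(Z)^2$ block of $V_Z(\varepsilon)^2$ because the cross term $(\varepsilon \pi_2(Z)^1_{s,t})\otimes(\varepsilon\pi_2(Z)^1_{t,u})$ carries exactly $\varepsilon^2$; multiplying the $\pi_{ij}$-identity by $\varepsilon$ matches the off-diagonal blocks, because the tensor $\pi_1(Z)^1_{s,t}\otimes(\varepsilon\pi_2(Z)^1_{t,u})$ and $(\varepsilon\pi_2(Z)^1_{s,t})\otimes \pi_1(Z)^1_{t,u}$ each pick up one factor of $\varepsilon$. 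The $\pi_1(Z)^2$ block is unchanged and its Chen identity is inherited directly. So $V_Z(\varepsilon)_{s,t}\otimes V_Z(\varepsilon)_{t,u} = V_Z(\varepsilon)_{s,u}$.

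Next I would check finite $p$-variation. Since each component of $V_Z(\varepsilon)$ is just a scalar multiple (by some power of $\varepsilon$) of the corresponding component of $Z$, we have
\begin{align*}
\sup_{\mathcal{D}} \sum_l \bigl| V_Z(\varepsilon)^i_{t_l,t_{l+1}}\bigr|^{p/i} \le C(\varepsilon)\, \sup_{\mathcal{D}} \sum_l \bigl| Z^i_{t_l,t_{l+1}}\bigr|^{p/i} < \infty
\end{align*}
for $i=1,2$, where $C(\varepsilon)$ depends on $|\varepsilon|$ and the power of $\varepsilon$ appearing. Thus $V_Z(\varepsilon) \in \Omega_p(V\oplus V)$.

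For the second assertion, observe that $V(\varepsilon)$ is exactly the rough path built from $V_Z(\varepsilon)$ by the formulas (\ref{eq:RPAdd1})--(\ref{eq:RPAdd2}) applied to $V_Z(\varepsilon)$ in place of $Z$: the first level becomes $\pi_1(V_Z(\varepsilon))^1 + \pi_2(V_Z(\varepsilon))^1 = \pi_1(Z)^1 + \varepsilon\pi_2(Z)^1$, and the second level becomes $\pi_1(Z)^2 + \varepsilon^2 \pi_2(Z)^2 + \varepsilon\pi_{12}(Z) + \varepsilon\pi_{21}(Z)$. Hence applying the previous proposition (summing the projections of a rough path in $V\oplus V$ produces a rough path in $V$) to $V_Z(\varepsilon)$ yields $V(\varepsilon)\in\Omega_p(V)$. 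The only real work is the tensor-algebra bookkeeping in step one, and the possible obstacle is simply keeping the six Chen identities and their scaling factors straight; once that is done the rest is immediate.
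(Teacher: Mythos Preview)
Your proof is correct and follows essentially the same approach as the paper: both verify Chen's identity for $V_Z(\varepsilon)$ by expanding the tensor product componentwise and invoking the four second-level identities coming from the multiplicativity of $Z$, then matching the appropriate powers of $\varepsilon$. You are in fact slightly more thorough than the paper, which only writes out the Chen-identity verification and leaves the finite $p$-variation check and the reduction of $V(\varepsilon)$ to the previous proposition implicit.
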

\begin{proof}
We must verify that Chen's identify is satisfied so we examine $V_{Z}\left(\varepsilon\right)_{s,t}\otimes V_{Z}\left(\varepsilon\right)_{tu}$.
For simplicity let $X$ and $Y$ denote $\pi_{1}\left(Z\right)$ and
$\pi_{2}\left(Z\right)$ respectively. By definition, \begin{align*}
 & \left(V_{Z}\left(\varepsilon\right)_{s,t}\otimes V_{Z}\left(\varepsilon\right)_{t,u}\right)^{2}=\\
 & \left(\begin{array}{cc}
X_{s,t}^{2}+X_{t,u}^{2}+X_{s,t}^{1}\otimes X_{t,u}^{1} & \varepsilon\left(\pi_{1,2}\left(Z\right)_{s,t}+\pi_{1,2}\left(Z\right)_{t,u}+X_{s,t}^{1}\otimes Y_{t,u}^{1}\right)\\
\varepsilon\left(\pi_{2,1}\left(Z\right)_{s,t}+\pi_{2,1}\left(Z\right)_{t,u}+Y_{s,t}^{1}\otimes X_{t,u}^{1}\right) & \varepsilon^{2}\left(Y_{s,t}^{2}+Y_{t,u}^{2}+Y_{s,t}^{1}\otimes Y_{t,u}^{1}\right)\end{array}\right)\end{align*}
 But since $Z$ is multiplicative, we have \begin{align*}
 & \left(Z_{s,t}\otimes Z_{t,u}\right)^{2}=\\
 & \left(\begin{array}{cc}
X_{s,t}^{2}+X_{t,u}^{2}+X_{s,t}^{1}\otimes X_{t,u}^{1} & \pi_{1,2}\left(Z\right)_{s,t}+\pi_{1,2}\left(Z\right)_{t,u}+X_{s,t}^{1}\otimes Y_{t,u}^{1}\\
\pi_{2,1}\left(Z\right)_{s,t}+\pi_{2,1}\left(Z\right)_{t,u}+Y_{s,t}^{1}\otimes X_{t,u}^{1} & Y_{s,t}^{2}+Y_{t,u}^{2}+Y_{s,t}^{1}\otimes Y_{t,u}^{1}\end{array}\right)\\
 & =\left(\begin{array}{cc}
X_{s,u}^{2} & \pi_{12}\left(Z\right)_{s,u}\\
\pi_{21}\left(Z\right)_{s,u} & Y_{s,u}^{2}\end{array}\right)\end{align*}
so that \begin{align*}
X_{s,t}^{2}+X_{t,u}^{2}+X_{s,t}^{1}\otimes X_{t,u}^{1}= & X_{s,u}^{2}\\
\pi_{1,2}\left(Z\right)_{s,t}+\pi_{1,2}\left(Z\right)_{t,u}+X_{s,t}^{1}\otimes Y_{t,u}^{1}= & \pi_{1,2}\left(Z\right)_{s,u}\\
\pi_{2,1}\left(Z\right)_{s,t}+\pi_{2,1}\left(Z\right)_{t,u}+Y_{s,t}^{1}\otimes X_{t,u}^{1}= & \pi_{1,2}\left(Z\right)_{s,u}\\
Y_{s,t}^{2}+Y_{t,u}^{2}+Y_{s,t}^{1}\otimes Y_{t,u}^{1}= & Y_{s,u}^{2}\end{align*}
which implies\begin{align*}
\left(V_{Z}\left(\varepsilon\right)_{s,t}\otimes V_{Z}\left(\varepsilon\right)_{t,u}\right)^{2}= & V_{Z}\left(\varepsilon\right)_{s,u}^{2}.\end{align*}
\end{proof}
\begin{rem}
The path $\varepsilon\rightarrow V_{Z}\left(\varepsilon\right)$ defined
above is continuous in $p$ variation topology. 
\end{rem}
If $c\left(\varepsilon\right)$ is a curve consisting only of smooth
rough paths such that $c\left(0\right)=X$, then the information in
the derivative at $0$ is $c'\left(0\right)^{1}$, together with $c'\left(0\right)^{2}$
which corresponds to the sum $\int dX\otimes dc'\left(0\right)^{1}+\int c'\left(0\right)d\otimes X$
. If we want to associate it to some equivalence class $\left[Z\right]$
for some $Z\in WG\Omega_{p}\left(V\oplus V\right)$, we would need
a way of separating $c'\left(0\right)^{2}$ into two terms which would
correspond to $\pi_{1,2}\left(Z\right)$ and $\pi_{2,1}\left(Z\right)$.
If we could achieve this, then to any equivalence class of curves
we could uniquely associate a $\left[Z\right]$ where $Z^{1}=\left(X,c'\left(0\right)^{1}\right)$
and $\left.\frac{d}{d\varepsilon}\right|_{\varepsilon=0}V_{Z}\left(\varepsilon\right)=\left.\frac{d}{d\varepsilon}\right|_{\varepsilon=0}c\left(\varepsilon\right)$.
Unfortunately, there is no clear way to do this other than for smooth
rough paths. Hence, we need something further which is given by the
independent second level variation. 

Suppose $X=\left(1,X^{1},X^{2}\right)$ is a $p$-geometric rough
path over $V$ and $c\left(\varepsilon\right)$ is a curve in $G\Omega_{p}\left(\mathbb{R}^{d}\right)$
such that $\frac{d}{d\varepsilon}c\left(\varepsilon\right)^{1}$ has
finite $p$ variation and $c\left(0\right)=X$. We can form the Lyons-Victoir
extension $W\in WG\Omega_{p}\left(V\oplus V\right)$ of $W^{1}=\left(X^{1},c'\left(0\right)^{1}\right)$.
Intuitively, the projections $\pi_{1,2}\left(W\right)$ and $\pi_{2,1}\left(W\right)$
are some cross iterated integrals of $X$ and $c'\left(0\right)^{1}$.
But, the sum of the projections $\pi_{1,2}$ and $\pi_{2,1}$ may
not equal $c'\left(0\right)^{2}$ due to independent second level
variation. Therefore, we introduce $\varphi=c'\left(0\right)^{2}-\pi_{1,2}\left(W\right)-\pi_{2,1}\left(W\right)$
which measures the relationship of the extension of $\left(X^{1},c'\left(0\right)^{1}\right)$
to the derivative. Then, the pair $\left(W,\varphi\right)$ contains
all the information included in $X^{1}$ and the derivative of $c\left(\varepsilon\right)$.
In order to recover the information from $X^{2}$, we form a new element
$Z$ of $WG\Omega_{p}\left(V\oplus V\right)$ by replacing $\pi_{1}\left(W\right)^{2}$
with $X^{2}$. Then $\left(Z,\varphi\right)$ has all the information
contained in the derivative of $c\left(\varepsilon\right)$, though
we need to quotient out some information because $\pi_{2}\left(Z\right)^{2}$
is not related to the derivative of $c\left(\varepsilon\right)$. 

We now formalize the above considerations. 
\begin{defn}
\label{def:TgtEquiv}For $Z,\tilde{Z}\in WG\Omega_{p}\left(V\oplus V\right)$
such that $\pi_{1}\left(Z\right)=\pi_{1}\left(\tilde{Z}\right)=X$,
and $\varphi,\tilde{\varphi}\in\Omega_{\frac{p}{2}}\left(V\oplus V\right)$,
we say the pair $\left(Z,\varphi\right)$ is equivalent to $\left(\tilde{Z},\tilde{\varphi}\right)$,
denoted $\left(Z,\varphi\right)\sim\left(\tilde{Z},\tilde{\varphi}\right)$,
if \begin{align*}
\left.\frac{d}{d\varepsilon}\right|_{\varepsilon=0}V_{\left(Z,\varphi\right)}= & \left.\frac{d}{d\varepsilon}\right|_{\varepsilon=0}V_{\left(\tilde{Z},\tilde{\varphi}\right)}\end{align*}
where the variational curve $V_{\left(Z,\varphi\right)}^{1}\left(\varepsilon\right)\in C_{0}\left(\Delta_{T};T^{2}\left(V\right)\right)$
is defined by \begin{align*}
V_{\left(Z,\varphi\right)}^{1}\left(\varepsilon\right)= & X^{1}+\varepsilon\pi_{2}\left(Z\right)^{1}\\
V_{\left(Z,\varphi\right)}^{2}\left(\varepsilon\right)= & X^{2}+\varepsilon\left[\pi_{1,2}\left(Z\right)+\pi_{2,1}\left(Z\right)+\varphi\right]+\varepsilon^{2}\pi_{2}\left(Z\right)^{2}.\end{align*}
\end{defn}
\begin{rem}
By proposition \ref{pro:RPAdd}, $V_{\left(Z,\varphi\right)}$ is
actually in $WG\Omega_{p}\left(V\right)$. \end{rem}
\begin{prop}
The above relation $\sim$ is an equivalence relation. \end{prop}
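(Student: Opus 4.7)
The plan is to observe that $\sim$ is defined by the equality of two objects (the derivatives at $\varepsilon=0$ of the variational curves) living in a common function space, and hence the three defining properties of an equivalence relation will be inherited from the corresponding properties of equality.

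First I would unfold the definition: taking the derivative at $\varepsilon=0$ of
\begin{align*}
V_{(Z,\varphi)}^{1}(\varepsilon) & = X^{1}+\varepsilon\pi_{2}(Z)^{1}, \\
V_{(Z,\varphi)}^{2}(\varepsilon) & = X^{2}+\varepsilon[\pi_{1,2}(Z)+\pi_{2,1}(Z)+\varphi]+\varepsilon^{2}\pi_{2}(Z)^{2},
\end{align*}
one reads off
\[
\left.\tfrac{d}{d\varepsilon}\right|_{\varepsilon=0}V_{(Z,\varphi)} = \bigl(0,\,\pi_{2}(Z)^{1},\,\pi_{1,2}(Z)+\pi_{2,1}(Z)+\varphi\bigr).
\]
Thus the relation $(Z,\varphi)\sim(\tilde Z,\tilde\varphi)$ reduces to the pair of equalities
\[
\pi_{2}(Z)^{1}=\pi_{2}(\tilde Z)^{1}, \qquad \pi_{1,2}(Z)+\pi_{2,1}(Z)+\varphi=\pi_{1,2}(\tilde Z)+\pi_{2,1}(\tilde Z)+\tilde\varphi,
\]
taking place in the appropriate function spaces on $\Delta_{T}$. (Recall also that we require $\pi_{1}(Z)=\pi_{1}(\tilde Z)=X$, but this is a constraint imposed on the representatives rather than a condition to verify.)

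Once the relation has been reformulated as componentwise equality, the three axioms are immediate. Reflexivity follows because each side trivially equals itself. Symmetry follows because an equality of the above form can be read in either direction. Transitivity follows by chaining two such equalities through a middle representative $(\tilde Z,\tilde\varphi)$. I do not expect any real obstacle here: the only point worth noting is simply that the definition was set up so that the equivalence is pulled back from honest equality in a linear space, so the verification is essentially formal.
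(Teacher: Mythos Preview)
Your proposal is correct and takes essentially the same approach as the paper, which dispatches the result in a single sentence (``This is clear because the relation is defined by an equality''). Your version simply makes explicit the derivative computation and the resulting componentwise equalities, which is a harmless elaboration of the same idea.
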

\begin{proof}
This is clear because the relation is defined by an equality.
\end{proof}
We now demonstrate that each equivalence class of curves through $X$,
where equivalence is defined by equal derivative in the sense of our
differential structure on $T^{2}\left(V\right)$, can be associated
to a unique$\left[Z,\varphi\right]$. If we denote the collection
of all such equivalence classes by $T_{X}WG\Omega_{p}$, this will
justify calling $T_{X}WG\Omega_{p}$ the tangent space. 
\begin{thm}
\label{thm:TgtFromCurve}Let $c\left(\varepsilon\right):\left[-\tau,\tau\right]\rightarrow WG\Omega_{p}\left(V\right)$
be such that $c\left(0\right)=X$ and let \[
\left(1,C'\left(0\right)^{1},C\left(0\right)^{2}\right)\]
be its derivative at $0$ in $T^{2}\left(V\right)$ sense. If $c'\left(0\right)^{1}$
has finite $p$-variation and $c'\left(0\right)^{2}$ has finite $\frac{p}{2}$-variation,
there exists a unique equivalence class $\left[Z,\varphi\right]$
such that \begin{align*}
\left.\frac{d}{d\varepsilon}\right|_{\varepsilon=0}c\left(\varepsilon\right)= & \left.\frac{d}{d\varepsilon}\right|_{\varepsilon=0}V_{\left[Z,\varphi\right]}.\end{align*}
\end{thm}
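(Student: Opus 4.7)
The plan is to construct an explicit representative $(Z,\varphi)$ of the desired equivalence class via the Lyons--Victoir extension, after which uniqueness of the class follows immediately from the definition of $\sim$. I would start from the observation that the pair $(X^1, c'(0)^1)$ defines a path $[0,T] \to V \oplus V$ of finite $p$-variation, since $X^1$ is of finite $p$-variation as a first-level component of a rough path and $c'(0)^1$ is of finite $p$-variation by hypothesis. By the Lyons--Victoir extension theorem (proved in the appendix), this path lifts to some $W \in WG\Omega_p(V \oplus V)$ whose first level is $(X^1, c'(0)^1)$. The projection $\pi_1(W)$ is then a weakly geometric $p$-rough path over $X^1$, but its second level $\pi_1(W)^2$ need not agree with $X^2$.

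I would then define $Z$ by keeping every component of $W$ unchanged except for replacing $\pi_1(W)^2$ with $X^2$; equivalently, $\pi_1(Z) = X$, $\pi_2(Z) = \pi_2(W)$, and the cross terms $\pi_{1,2}(Z), \pi_{2,1}(Z)$ agree with those of $W$. The verification that $Z \in WG\Omega_p(V \oplus V)$ is purely algebraic: the Chen identities for the cross components involve only the first-level parts of $\pi_1$ and $\pi_2$ (which are unchanged), and so are inherited from $W$; the Chen identity for $\pi_1(Z)$ reduces precisely to Chen's identity for $X$; and the $\pi_2(Z)$ identity is unchanged from $\pi_2(W)$. The $p$-variation bound on $\pi_1(Z)^2$ is inherited from $X^2$, and the remaining $p$-variation bounds and the weakly geometric (shuffle) condition block-by-block transfer either from $W$ or from $X$ itself.

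Setting $\varphi := c'(0)^2 - \pi_{1,2}(Z) - \pi_{2,1}(Z)$, one obtains an object of finite $p/2$-variation since $c'(0)^2$ has finite $p/2$-variation by hypothesis and the cross projections do as second-level components of a $p$-rough path. Direct substitution into Definition \ref{def:TgtEquiv} gives
\begin{align*}
V_{(Z,\varphi)}^1(\varepsilon) &= X^1 + \varepsilon\, c'(0)^1,\\
V_{(Z,\varphi)}^2(\varepsilon) &= X^2 + \varepsilon\, c'(0)^2 + \varepsilon^2\, \pi_2(Z)^2,
\end{align*}
and termwise differentiation at $\varepsilon=0$ in $T^{2}(V)$ recovers exactly $(c'(0)^1, c'(0)^2)$, matching the derivative of $c$. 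Uniqueness of the equivalence class is then automatic: any other pair $(\tilde Z, \tilde \varphi)$ whose variational curve has the same derivative as $c$ is, by definition of $\sim$, equivalent to $(Z,\varphi)$.

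The principal obstacle is the construction of $W$, namely the invocation of the Lyons--Victoir extension, which is exactly why the hypothesis must be weakly geometric rather than a general $p$-rough path; once $W$ is available, the remainder is a careful bookkeeping exercise with Chen's identity and elementary $p$-variation estimates. A secondary subtlety worth flagging is that the extension $W$ is non-unique and therefore so are $Z$ and $\varphi$, but this non-uniqueness is precisely what the equivalence relation $\sim$ is designed to quotient out, so it causes no trouble.
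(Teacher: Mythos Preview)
Your proposal is correct and follows essentially the same route as the paper: take a Lyons--Victoir extension $W$ of $(X^1,c'(0)^1)$, replace the $(1,1)$ block of $W^2$ by $X^2$ to obtain $Z$, set $\varphi = c'(0)^2 - \pi_{1,2}(Z) - \pi_{2,1}(Z)$, and read off the derivative of $V_{(Z,\varphi)}$. The only point the paper makes explicit that you leave implicit is the verification that $\varphi$ is \emph{additive} over subintervals (not just of finite $p/2$-variation): this requires differentiating Chen's identity for $c(\varepsilon)^2$ at $\varepsilon=0$ and comparing with the Chen identities for $\pi_{1,2}(W)$ and $\pi_{2,1}(W)$, exactly the ``bookkeeping with Chen's identity'' you allude to, and is needed so that $V_{(Z,\varphi)}$ is genuinely a rough path.
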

\begin{proof}
If $W$ is a chosen Lyons-Victoir extension of $\left(X^{1},c'\left(0\right)^{1}\right)$
and \begin{align*}
\varphi_{s,t}:= & c'\left(0\right)_{s,t}^{2}-\pi_{1,2}\left(W\right)_{s,t}-\pi_{2,1}\left(W\right)_{s,t},\end{align*}
form $Z$ defined by\begin{align*}
Z^{1}= & \left(X^{1},c'\left(0\right)^{1}\right)\\
Z^{2}= & \left(\begin{array}{cc}
X^{2} & \pi_{1,2}\left(W\right)\\
\pi_{2,1}\left(W\right) & \pi_{2}\left(W\right)^{2}\end{array}\right).\end{align*}
Such $Z$ is multiplicative, since $X$ and $W$ are. Let us verify
that $\varphi$ is additive now. Since the curve $c\left(\varepsilon\right)$
is multiplicative for each $\varepsilon$, we have \begin{align*}
c\left(\varepsilon\right)_{s,u}^{2}= & c\left(\varepsilon\right)_{s,t}^{2}+c\left(\varepsilon\right)_{t,u}^{2}+c\left(\varepsilon\right)_{s,t}^{1}\otimes c\left(\varepsilon\right)_{t.u}^{1}\end{align*}
which implies \begin{align*}
c'\left(0\right)_{s,u}^{2}-c'\left(0\right)_{s,t}^{2}-c'\left(0\right)_{t,u}^{2}= & c'\left(0\right)_{s,t}^{1}\otimes X_{t,u}^{1}+X_{s,t}^{1}\otimes c'\left(0\right)_{t,u}^{1}\end{align*}
but also, by the construction of $W$, we have\begin{align*}
\pi_{1,2}\left(W\right)_{s,u}-\pi_{1,2}\left(W\right)_{s,t}-\pi_{1,2}\left(W\right)_{t,u}= & X_{s,t}^{1}\otimes c'\left(0\right)_{t,u}^{1}\end{align*}
and\begin{align*}
\pi_{2,1}\left(W\right)_{s,u}-\pi_{2,1}\left(W\right)_{s,t}-\pi_{2,1}\left(W\right)_{t,u} & =c'\left(0\right)_{s,t}^{1}\otimes X_{t,u}^{1}.\end{align*}
Hence, \begin{align*}
c'\left(0\right)_{s,u}^{2}-\pi_{1,2}\left(W\right)_{s,u}-\pi_{2,1}\left(W\right)_{s,u}= & c'\left(0\right)_{s,t}^{2}-\pi_{1,2}\left(W\right)_{s,t}-\pi_{2,1}\left(W\right)_{s,t}\\
 & +c'\left(0\right)_{t,u}^{2}-\pi_{1,2}\left(W\right)_{t,u}-\pi_{2,1}\left(W\right)_{t,u}\end{align*}
which shows that $\varphi_{s,t}=c'\left(0\right)_{s,t}^{2}-\pi_{1,2}\left(W\right)_{s,t}-\pi_{2,1}\left(W\right)_{s,t}$
is additive. Then, by definition the associated variational curve
$V_{\left[Z,\varphi\right]}\left(\varepsilon\right)\in WG\Omega_{p}\left(V\right)$
where \begin{align*}
V_{\left[Z,\varphi\right]}\left(\varepsilon\right)^{1}= & X^{1}+\varepsilon c'\left(0\right)^{1}\\
V_{\left[Z,\varphi\right]}\left(\varepsilon\right)^{2}= & X^{2}+\varepsilon\left(\pi_{1,2}\left(W\right)+\pi_{2,1}\left(W\right)+\varphi\right)+\varepsilon^{2}\pi_{2}\left(W\right)^{2}.\end{align*}
And by construction $\left.\frac{d}{d\varepsilon}\right|_{\varepsilon=0}V_{\left[Z,\varphi\right]}=\left.\frac{d}{d\varepsilon}\right|_{\varepsilon=0}c\left(\varepsilon\right)$.
\end{proof}
Let us show that $T_{X}WG\Omega_{p}$ is a linear space.
\begin{lem}
\label{lem:EquivAdd}Given \textup{$Z_{1}=\left(X,Y\right),Z_{2}=\left(X,\tilde{Y}\right)\in G\Omega_{p}\left(V\oplus W\right)$
and} $W\in WG\Omega_{p}\left(W\right)$, a particular choice of Lyons-Victoir
extension of $\left(1,Y^{1}+\tilde{Y}^{1}\right)$, and \[
Z\in C_{0}\left(\Delta_{T};T^{2}\left(V\oplus W\right)\right)\]
defined by\begin{align*}
Z_{s,t}^{1}= & \left(X_{s,t}^{1},Y_{s,t}^{1}+\tilde{Y}_{s,t}^{1}\right)\\
Z_{s,t}^{2}= & \left(\begin{array}{cc}
X_{s,t}^{2} & \sum_{i=1}^{2}\pi_{1,2}\left(Z_{i}\right)_{s,t}\\
\sum_{i=1}^{2}\pi_{2,1}\left(Z_{i}\right)_{s,t} & W_{s,t}^{2}\end{array}\right)\end{align*}

is a weakly geometric $p$-rough path over $V\oplus W$.\end{lem}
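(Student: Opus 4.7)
The plan is to verify in turn the three defining properties of a weakly geometric $p$-rough path for the constructed $Z$: Chen's identity, finite $p$-variation at each tensor level, and the step-$2$ shuffle relation. I will use the block decomposition of $T^{2}(V \oplus W)$ induced by the direct sum to isolate each component and reduce to facts already known about $Z_1$, $Z_2$, and the Lyons-Victoir extension.

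For Chen's identity, the four blocks of $Z^2$ must each be checked. The $V^{\otimes 2}$ block is $X^2$, multiplicative because $X = \pi_1(Z_1)$ is a rough path. The $W^{\otimes 2}$ block is $W^2$, multiplicative by the very definition of the Lyons-Victoir extension. For the $V \otimes W$ block, I would apply the relation
\[
\pi_{1,2}(Z_i)_{s,u} = \pi_{1,2}(Z_i)_{s,t} + \pi_{1,2}(Z_i)_{t,u} + X^1_{s,t} \otimes \pi_2(Z_i)^1_{t,u}
\]
for each $i = 1,2$ and sum; since $\pi_2(Z_1)^1 = Y^1$ and $\pi_2(Z_2)^1 = \tilde Y^1$, the cross terms combine into $X^1_{s,t} \otimes (Y^1 + \tilde Y^1)_{t,u}$, as required for Chen's identity applied to $Z$. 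The $W \otimes V$ block is handled symmetrically.

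The finite $p$/$p/2$-variation at each level then follows immediately from the triangle inequality for the $\ell^{p/2}$ norm on partitions, since each entry of $Z^1$ or block of $Z^2$ is a sum of finitely many components (drawn from $Z_1$, $Z_2$, or $W$) each already having the required variation. For the weakly geometric property, I would invoke the algebraic characterization for step-$2$ rough paths: $Z$ is weakly geometric if and only if $Z^2_{s,t} + \sigma(Z^2_{s,t}) = Z^1_{s,t} \otimes Z^1_{s,t}$, where $\sigma$ is the tensor flip on $(V \oplus W)^{\otimes 2}$. On the diagonal blocks this reduces to the already-known shuffle identities for $X$ and for $W$. On the off-diagonal blocks, summing the identity $\sigma(\pi_{1,2}(Z_i)) + \pi_{2,1}(Z_i) = \pi_2(Z_i)^1 \otimes X^1$ (which holds because each $Z_i$ is geometric) over $i = 1, 2$ yields
\[
\sigma\!\Bigl(\sum_{i=1}^{2} \pi_{1,2}(Z_i)\Bigr) + \sum_{i=1}^{2} \pi_{2,1}(Z_i) = (Y^1 + \tilde Y^1) \otimes X^1,
\]
which is exactly the off-diagonal shuffle identity demanded of $Z$.

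The main obstacle, such as it is, is bookkeeping: one must keep track of which tensor block carries which object and ensure that the flip map on the direct-sum tensor square is applied consistently. An alternative route via approximation by canonical lifts of smooth paths is conceivable, but would introduce an additional limiting argument; the algebraic route above is shorter precisely because both Chen's identity and the shuffle relation are linear in the quantities involved once $Z^1$ has been prescribed.
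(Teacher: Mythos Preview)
Your verification of Chen's identity is essentially the same as the paper's: both compute the four blocks of $(Z_{s,t}\otimes Z_{t,u})^{2}$ and reduce each to the multiplicativity of $Z_{1}$, $Z_{2}$, and the Lyons--Victoir extension $W$. The paper stops there, concluding with ``Putting these expressions together, we get $(Z_{s,t}\otimes Z_{t,u})^{2}=Z_{s,u}^{2}$,'' and treats the remaining properties as implicit.

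You go further than the paper by also checking finite $p$-variation and the step-$2$ shuffle identity. Both of your arguments are sound: the variation bound is a straightforward Minkowski estimate (valid since $p/2>1$ for $2<p<3$), and your use of the algebraic characterisation of weakly geometric rough paths is legitimate and cleanly exploits the linearity of the off-diagonal shuffle constraint in the $\pi_{1,2}$ and $\pi_{2,1}$ components. So your proof is not merely an alternative route but a strictly more complete one; the paper's own proof establishes only multiplicativity and leaves the ``weakly geometric'' part of the conclusion to the reader.
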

\begin{proof}
Consider $\left(1,Y^{1}+\tilde{Y}^{1}\right).$ We can extend this
via the Lyons-Victoir extension to a weakly geometric rough path $W\in WG\Omega_{p}\left(W\right)$
which we write as \[
W=\left(1,Y^{1}+\tilde{Y}^{1},W^{2}\right).\]
By definition,\begin{align*}
\left(Z_{s,t}\otimes Z_{t,u}\right)_{1,1}^{2}= & X_{s,t}^{2}+X_{t,u}^{2}+X_{s,t}^{1}\otimes X_{t,u}^{1}\\
\left(Z_{s,t}\otimes Z_{t,u}\right)_{1,2}^{2}= & \sum_{i=1}^{2}\pi_{1,2}\left(Z_{i}\right)_{s,t}+\sum_{i=1}^{2}\pi_{1,2}\left(Z_{i}\right)_{t,u}+\left(X_{s,t}^{1}\otimes Y_{t,u}^{1}+X_{s,t}^{1}\otimes\tilde{Y}_{t,u}^{1}\right)\\
\left(Z_{s,t}\otimes Z_{t,u}\right)_{2,1}^{2}= & \sum_{i=1}^{2}\pi_{2,1}\left(Z_{i}\right)_{s,t}+\sum_{i=1}^{2}\pi_{2,1}\left(Z_{i}\right)_{t,u}+\left(Y_{s,t}^{1}\otimes X_{t,u}^{1}+\tilde{Y}_{s,t}^{1}\otimes X_{t,u}^{1}\right)\\
\left(Z_{s,t}\otimes Z_{t,u}\right)_{2,2}^{2}= & W_{s,t}^{2}+W_{t,u}^{2}+Y_{s,t}^{1}\otimes Y_{t,u}^{1}+Y_{s,t}^{1}\otimes\tilde{Y}_{t,u}^{1}+\tilde{Y}_{s,t}^{1}\otimes Y_{t,u}^{1}+\tilde{Y}_{s,t}^{1}\otimes\tilde{Y}_{t,u}^{1}\end{align*}
 Since $Z_{i}$ is multiplicative for $i=1,2$ we have\begin{align*}
\left(\begin{array}{cc}
X_{s,t}^{2}+X_{t,u}^{2}+X_{s,t}^{1}\otimes X_{t,u}^{1} & \pi_{1,2}\left(Z_{1}\right)_{s,t}+\pi_{1,2}\left(Z_{1}\right)_{t,u}+X_{s,t}^{1}\otimes Y_{t,u}^{1}\\
\pi_{2,1}\left(Z_{1}\right)_{s,t}+\pi_{2,1}\left(Z_{1}\right)_{t,u}+Y_{s,t}^{1}\otimes X_{t,u}^{1} & Y_{s,t}^{2}+Y_{t,u}^{2}+Y_{s,t}^{1}\otimes Y_{t,u}^{1}\end{array}\right)\\
=\left(\begin{array}{cc}
X_{s,u}^{2} & \pi_{1,2}\left(Z_{1}\right)_{s,u}\\
\pi_{2,1}\left(Z_{1}\right)_{s,u} & Y_{s,u}^{2}\end{array}\right)\end{align*}
and\begin{align*}
\left(\begin{array}{cc}
X_{s,t}^{2}+X_{t,u}^{2}+X_{s,t}^{1}\otimes X_{t,u}^{1} & \pi_{1,2}\left(Z_{2}\right)_{s,t}+\pi_{1,2}\left(Z_{2}\right)_{t,u}+X_{s,t}^{1}\otimes\tilde{Y}_{t,u}^{1}\\
\pi_{2,1}\left(Z_{2}\right)_{s,t}+\pi_{2,1}\left(Z_{2}\right)_{t,u}+\tilde{Y}_{s,t}^{1}\otimes X_{t,u}^{1} & \tilde{Y}_{s,t}^{2}+\tilde{Y}_{t,u}^{2}+\tilde{Y}_{s,t}^{1}\otimes\tilde{Y}_{t,u}^{1}\end{array}\right)\\
=\left(\begin{array}{cc}
X_{s,u}^{2} & \pi_{1,2}\left(Z_{2}\right)_{s,u}\\
\pi_{2,1}\left(Z_{2}\right)_{s,u} & \tilde{Y}_{s,u}^{2}\end{array}\right).\end{align*}
Also, since $W$ is multiplicative, \begin{align*}
W_{s,t}^{2}+W_{t,u}^{2}+\left(Y^{1}+\tilde{Y}^{1}\right)\otimes\left(Y^{1}+\tilde{Y}^{1}\right)= & W_{s,u}.\end{align*}
Putting these expressions together, we get $\left(Z_{s,t}\otimes Z_{t,u}\right)^{2}=Z_{s,u}^{2}$. \end{proof}
\begin{prop}
The tangent space $T_{X}G\Omega_{p}\left(V\right)$ is linear. More
precisely the operations \begin{align*}
\left[Z_{1},\varphi_{1}\right]+\left[Z_{2},\varphi_{2}\right]:= & \left[Z,\varphi_{1}+\varphi_{2}\right],\end{align*}
where\begin{align*}
Z^{1}= & \left(X_{s,t}^{1},\pi_{2}\left(Z_{1}\right)^{1}+\pi_{2}\left(Z_{2}\right)^{1}\right)\\
Z^{2}= & \left(\begin{array}{cc}
X_{s,t}^{2} & \sum_{i=1}^{2}\pi_{1,2}\left(Z_{i}\right)_{s,t}\\
\sum_{i=1}^{2}\pi_{2,1}\left(Z_{i}\right)_{s,t} & W_{s,t}^{2}\end{array}\right)\end{align*}
for $W$ some Lyons-Victoir extension of $\left(1,\pi_{2}\left(Z_{1}\right)^{1}+\pi_{2}\left(Z_{2}\right)^{1}\right)$,
and \begin{align*}
\lambda\left[Z,\varphi\right]:= & \left[Z_{\lambda},\lambda\varphi\right]\end{align*}
where $Z_{\lambda}$ is defined by \begin{align*}
Z_{\lambda}^{1}= & \left(X_{s,t}^{1},\lambda\pi_{2}\left(Z\right)^{1}\right)\\
Z_{\lambda}^{2}= & \left(\begin{array}{cc}
X_{s,t}^{2} & \lambda\pi_{1,2}\left(Z\right)_{s,t}\\
\lambda\pi_{2,1}\left(Z\right)_{s,t} & \lambda^{2}Z_{s,t}^{2}\end{array}\right)\end{align*}
are well defined.\end{prop}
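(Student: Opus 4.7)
The plan is to verify three things: (i) the objects $Z$ and $Z_\lambda$ written down in the statement are genuine elements of $WG\Omega_p(V \oplus V)$ with $\pi_1 = X$, (ii) the operations descend to well-defined maps on equivalence classes, and (iii) the resulting algebraic structure satisfies the linear space axioms.

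For step (i), the addition case is essentially Lemma \ref{lem:EquivAdd}: given any chosen Lyons--Victoir extension $W$ of $(1, \pi_2(Z_1)^1 + \pi_2(Z_2)^1)$ and the multiplicativity of $Z_1$ and $Z_2$, Chen's identity for the constructed $Z$ was already checked there, and the $p$-variation bounds follow from those of $Z_1, Z_2, W$. For scalar multiplication, a check analogous to Proposition \ref{pro:RPAdd} suffices: the diagonal entries of $(Z_\lambda)_{s,t}^1 \otimes (Z_\lambda)_{t,u}^1$ pick up a factor $\lambda^2$ and the off-diagonal entries a factor $\lambda$, which match exactly the scaling prescribed on the second-level components of $Z_\lambda$, so Chen's identity transfers directly from $Z$.

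For step (ii), I would compute the derivative at $\varepsilon = 0$ of $V_{(Z,\varphi_1+\varphi_2)}$ using Definition \ref{def:TgtEquiv}. The first level is $X^1 + \varepsilon(\pi_2(Z_1)^1 + \pi_2(Z_2)^1)$, giving derivative $\pi_2(Z_1)^1 + \pi_2(Z_2)^1$; the second level is
\[
X^2 + \varepsilon \sum_{i=1}^{2}\bigl[\pi_{1,2}(Z_i) + \pi_{2,1}(Z_i) + \varphi_i\bigr] + \varepsilon^2 W^2,
\]
with derivative $\sum_{i=1}^{2}[\pi_{1,2}(Z_i) + \pi_{2,1}(Z_i) + \varphi_i]$ at $\varepsilon=0$. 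This is precisely the sum of the derivatives of $V_{(Z_i,\varphi_i)}$. Consequently, if $(Z_i,\varphi_i) \sim (\tilde Z_i,\tilde\varphi_i)$ for $i=1,2$, then the derivatives agree pairwise and therefore so do their sums, so $[Z,\varphi_1+\varphi_2]$ depends only on the two input classes. The scalar case is entirely analogous: the derivative of $V_{(Z_\lambda,\lambda\varphi)}$ at zero equals $\lambda$ times the derivative of $V_{(Z,\varphi)}$.

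The main conceptual obstacle is the apparent dependence on the auxiliary Lyons--Victoir extension $W$ used to build $Z$. This is resolved by the observation that $W$ enters the variational curve only through the coefficient of $\varepsilon^2$, so it is invisible to the derivative at $0$; thus any two valid extensions produce equivalent pairs, and the ambiguity disappears at the level of $T_X WG\Omega_p$. Once steps (i) and (ii) are established, the vector space axioms (associativity and commutativity of $+$, distributivity, compatibility of scalar multiplication, existence of a zero element given by the class of $(Z_0,0)$ with $\pi_2(Z_0)=0$, and of inverses via $\lambda = -1$) follow at once, since they all hold for the underlying derivatives in the linear space $C(\Delta_T; T^2(V))$ and the map sending a class $[Z,\varphi]$ to the derivative of an associated variational curve is, by the computation above, linear with respect to the proposed operations.
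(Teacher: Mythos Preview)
Your proposal is correct and follows essentially the same approach as the paper: invoke Lemma~\ref{lem:EquivAdd} (respectively Proposition~\ref{pro:RPAdd}) to see that $Z$ (respectively $Z_\lambda$) is a genuine rough path, then compute the derivative of the associated variational curve at $\varepsilon=0$ and observe that it depends only on the data $\pi_2(Z_i)^1$ and $\pi_{1,2}(Z_i)+\pi_{2,1}(Z_i)+\varphi_i$, which are preserved under the equivalence relation. Your treatment is in fact slightly more complete than the paper's: you explicitly note that the Lyons--Victoir extension $W$ only enters at order $\varepsilon^2$ and hence does not affect the equivalence class, and you sketch why the vector space axioms hold, whereas the paper leaves both points implicit.
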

\begin{proof}
That $Z$, as defined above, is in $WG\Omega_{p}\left(V\oplus V\right)$
follows from Lemma \ref{lem:EquivAdd} so $\left[Z,\varphi_{1}+\varphi_{2}\right]$
is indeed an equivalence class. We now show that $\left[Z,\varphi_{1}+\varphi_{2}\right]$
does not depend on the choice of representatives from $\left[Z_{1},\varphi_{1}\right]$
and $\left[Z_{2},\varphi_{2}\right]$. Let $\left(\tilde{Z_{1}},\tilde{\varphi_{1}}\right)$
and $\left(\tilde{Z_{2}},\tilde{\varphi_{2}}\right)$ be some other
representatives from the equivalence classes $\left[Z_{1},\varphi_{1}\right]$
and $\left[Z_{2},\varphi_{2}\right]$ respectively. We form $\left(\tilde{Z},\tilde{\varphi_{1}}+\tilde{\varphi_{2}}\right)$
with \begin{align*}
\tilde{Z}^{1}= & \left(X_{s,t}^{1},\pi_{2}\left(\tilde{Z_{1}}\right)^{1}+\pi_{2}\left(\tilde{Z_{2}}\right)^{1}\right)\\
\tilde{Z}^{2}= & \left(\begin{array}{cc}
X_{s,t}^{2} & \sum_{i=1}^{2}\pi_{1,2}\left(\tilde{Z}_{i}\right)_{s,t}\\
\sum_{i=1}^{2}\pi_{2,1}\left(\tilde{Z_{i}}\right)_{s,t} & W_{s,t}^{2}\end{array}\right).\end{align*}
Then\begin{align*}
V_{\left(\tilde{Z},\tilde{\varphi_{1}}+\tilde{\varphi_{2}}\right)}^{1}\left(\varepsilon\right)= & X^{1}+\varepsilon\left[\pi_{2}\left(\tilde{Z_{1}}\right)^{1}+\pi_{2}\left(\tilde{Z_{2}}\right)^{1}\right]\\
V_{\left(\tilde{Z},\tilde{\varphi_{1}}+\tilde{\varphi_{2}}\right)}^{2}\left(\varepsilon\right)= & X^{2}+\varepsilon\sum_{i=1}^{2}\pi_{1,2}\left(\tilde{Z}_{i}\right)_{s,t}+\varepsilon\sum_{i=1}^{2}\pi_{2,1}\left(\tilde{Z_{i}}\right)_{s,t}+\varepsilon\left(\tilde{\varphi_{1}}+\tilde{\varphi}_{2}\right)+\varepsilon^{2}W_{s,t}^{2}.\end{align*}
Furthermore, since $\left(Z_{i},\varphi_{i}\right)\sim\left(\tilde{Z_{i}},\tilde{\varphi_{i}}\right)$
for $i\in\left\{ 1,2\right\} $, we have \begin{align*}
\left.\frac{d}{d\varepsilon}\right|_{\varepsilon=0}V_{\left(Z_{i},\varphi_{i}\right)}\left(\varepsilon\right)= & \left.\frac{d}{d\varepsilon}\right|_{\varepsilon=0}V_{\left(\tilde{Z_{i}},\tilde{\varphi_{i}}\right)}\left(\varepsilon\right)\end{align*}
which gives \begin{align*}
\pi_{2}\left(Z_{i}\right)^{1}= & \pi_{2}\left(\tilde{Z_{i}}\right)^{1}\end{align*}
and\begin{align*}
\pi_{1,2}\left(Z_{i}\right)+\pi_{2,1}\left(Z_{i}\right)+\varphi_{i}= & \pi_{1,2}\left(\tilde{Z}_{i}\right)+\pi_{2,1}\left(\tilde{Z}_{i}\right)+\tilde{\varphi}_{i}.\end{align*}
Therefore, \begin{align*}
\left.\frac{d}{d\varepsilon}\right|_{\varepsilon=0}V_{\left(\tilde{Z},\tilde{\varphi_{1}}+\tilde{\varphi_{2}}\right)}^{1}\left(\varepsilon\right)= & \left[\pi_{2}\left(\tilde{Z_{1}}\right)^{1}+\pi_{2}\left(\tilde{Z_{2}}\right)^{1}\right]\\
= & \left[\pi_{2}\left(Z_{1}\right)^{1}+\pi_{2}\left(Z_{2}\right)^{1}\right]\\
= & \left.\frac{d}{d\varepsilon}\right|_{\varepsilon=0}V_{\left(Z,\varphi_{1}+\varphi_{2}\right)}^{1}\left(\varepsilon\right)\end{align*}
and also\begin{align*}
\left.\frac{d}{d\varepsilon}\right|_{\varepsilon=0}V_{\left(\tilde{Z},\tilde{\varphi_{1}}+\tilde{\varphi_{2}}\right)}^{2}\left(\varepsilon\right)= & \sum_{i=1}^{2}\pi_{1,2}\left(\tilde{Z}_{i}\right)_{s,t}+\sum_{i=1}^{2}\pi_{2,1}\left(\tilde{Z_{i}}\right)_{s,t}+\tilde{\varphi}_{1}+\tilde{\varphi}_{2}\\
= & \sum_{i=1}^{2}\pi_{1,2}\left(Z_{i}\right)_{s,t}+\sum_{i=1}^{2}\pi_{2,1}\left(Z_{i}\right)_{s,t}+\varphi_{1}+\varphi_{2}\\
= & \left.\frac{d}{d\varepsilon}\right|_{\varepsilon=0}V_{\left(Z,\varphi_{1}+\varphi_{2}\right)}^{2}\left(\varepsilon\right).\end{align*}
Hence $\left(\tilde{Z},\tilde{\varphi_{1}}+\tilde{\varphi_{2}}\right)\sim\left(Z,\varphi_{1}+\varphi_{2}\right)$
and the addition is well defined. For scalar multiplication, $Z_{\lambda}\in WG\Omega_{p}\left(V\right)$
follows from proposition \ref{pro:RPAdd}. The fact that it is independent
of the choice of equivalence class representative is the same in nature
as the proof of this fact for addition.
\end{proof}

\section{Flow Equations}

In this section we investigate the existence and uniqueness of solutions
to flow equations on the space of weakly geometric rough paths $WG\Omega_{p}$
given by \begin{align*}
C'\left(\tau\right)= & F\left(C\left(\tau\right)\right),\mbox{ }C\left(0\right)=X.\end{align*}
We interpret the derivative on the left hand side as the associated
tangent vector in the sense given by theorem \ref{thm:TgtFromCurve}
and $F$ assigns elements of the tangent space at $C\left(\tau\right)$
to each $C\left(\tau\right)$. Key to the constructions in this section
is that although the collection of tangent spaces does not have a
vector bundle structure due to lack of local trivialization, there
is some useful structure which comes from rough paths being imbedded
in the the linear space $C\left(\Delta;T^{2}\left(V\right)\right)$
equipped with the $p$-variation distance. 

Throughout this section, we consider a class of vector fields given
as follows. Suppose we are given two maps $Z:WG\Omega_{p}\left(V\right)\rightarrow WG\Omega_{p}\left(V\oplus V\right)$
and $\varphi:WG\Omega_{p}\left(V\right)\rightarrow WG\Omega_{\frac{p}{2}}\left(V\right)$
such that for each $X$, $\pi_{1}\left(Z\left(X\right)\right)=X$.
Then, for each $X$, we can form the tangent vector $\left[Z\left(X\right),\varphi\left(X\right)\right]$
and so obtain an associated vector field $F\left(X\right)$. We will
use the notation $F_{Z,\varphi}\left(X\right):=\pi_{1,2}\left(Z\left(X\right)\right)+\pi_{2,1}\left(Z\left(X\right)\right)+\varphi\left(X\right)$
together with $F_{Z}\left(X\right)=Z\left(X\right)$ and $F_{\varphi}\left(X\right)=\varphi\left(X\right)$
when we wish to emphasize the tangent field generated by $Z$ and
$\varphi$.

First let us define the following function on the tangent collection
for future clarity of the notation.
\begin{defn}
For $\left[Z,\varphi\right]_{X}\in T_{X}WG\Omega_{p}\left(V\right)$
and $\left[W,\phi\right]_{Y}\in T_{Y}WG\Omega_{p}\left(V\right)$
let\begin{align*}
\tilde{d}_{p}\left(\left[Z,\varphi\right],\left[W,\phi\right]\right):= & \max\left[d_{p}\left(\pi_{1}\left(Z\right),\pi_{1}\left(W\right)\right)\right.\\
 & \sup_{\mathcal{D}}\left\{ \sum_{l}\left|\pi_{2}\left(Z\right)_{t_{l},t_{l+1}}^{1}-\pi_{2}\left(W\right)_{t_{l},t_{l+1}}^{1}\right|^{p}\right\} ^{\frac{1}{p}}\\
 & \sup_{\mathcal{D}}\left\{ \sum_{l}\left|\left[\pi_{1,2}\left(Z\right)+\pi_{2,1}\left(Z\right)+\varphi\right]{}_{t_{l},t_{l+1}}\right.\right.\\
 & -\left.\left.\left.\left[\pi_{1,2}\left(W\right)+\pi_{2,1}\left(W\right)+\phi\right]{}_{t_{l},t_{l+1}}\right|^{\frac{p}{2}}\right\} ^{\frac{2}{p}}\right]\end{align*}
which does not depend on the choice of representative of the equivalence
class.\end{defn}
\begin{prop}
The function $\tilde{d}_{p}$ is a metric on the disjoint union of
the tangent spaces, $TWG\Omega_{p}\left(V\right)$.\end{prop}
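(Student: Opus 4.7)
The plan is to verify the four metric axioms after first checking well-definedness on equivalence classes. The function $\tilde{d}_{p}$ is defined in terms of three quantities: (i) $d_{p}(\pi_{1}(Z),\pi_{1}(W))$, (ii) a $p$-variation seminorm of the difference $\pi_{2}(Z)^{1}-\pi_{2}(W)^{1}$, and (iii) a $p/2$-variation seminorm of the difference of the sums $\pi_{1,2}+\pi_{2,1}+\varphi$. By the equivalence relation of Definition \ref{def:TgtEquiv}, two pairs $(Z,\varphi)\sim(\tilde{Z},\tilde{\varphi})$ agree in $\pi_{1}$, in $\pi_{2}(\cdot)^{1}$, and in $\pi_{1,2}+\pi_{2,1}+\varphi$ (these are precisely the $T^{2}(V)$ derivative components of the variational curve), so $\tilde{d}_{p}$ is invariant under the choice of representative for either argument.

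For non-negativity and symmetry there is nothing to check. For the identity of indiscernibles, suppose $\tilde{d}_{p}([Z,\varphi],[W,\phi])=0$. Then vanishing of the first component forces $\pi_{1}(Z)=\pi_{1}(W)$, so both equivalence classes sit in the same fibre $T_{X}WG\Omega_{p}(V)$. Vanishing of the second forces $\pi_{2}(Z)^{1}=\pi_{2}(W)^{1}$ as additive maps on $\Delta_{T}$ (an additive function with zero $p$-variation vanishes on all $(s,t)$). Vanishing of the third similarly forces $\pi_{1,2}(Z)+\pi_{2,1}(Z)+\varphi=\pi_{1,2}(W)+\pi_{2,1}(W)+\phi$. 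These three equalities are exactly what is required for $\tfrac{d}{d\varepsilon}|_{\varepsilon=0}V_{(Z,\varphi)}=\tfrac{d}{d\varepsilon}|_{\varepsilon=0}V_{(W,\phi)}$ in $T^{2}(V)$, so $[Z,\varphi]=[W,\phi]$.

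The main work is the triangle inequality. Take a third class $[U,\psi]_{X''}$. For each of the three constituent terms I will argue as follows: fix a partition $\mathcal{D}$, apply the Minkowski inequality in $\ell^{r}$ for $r=p$ (components i and ii) and $r=p/2$ (component iii, where $r\geq 1$ since $p>2$) to the triangle inequality $|a_{l}-c_{l}|\leq|a_{l}-b_{l}|+|b_{l}-c_{l}|$ applied termwise, then take supremum over $\mathcal{D}$ using the standard fact that the supremum of a sum is bounded by the sum of suprema. This yields the triangle inequality for each of the three $p$-variation-type pieces separately. Taking the maximum over the three pieces preserves the triangle inequality, giving $\tilde{d}_{p}([Z,\varphi],[U,\psi])\leq\tilde{d}_{p}([Z,\varphi],[W,\phi])+\tilde{d}_{p}([W,\phi],[U,\psi])$.

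The only mildly subtle point is ensuring that the quantities entering (ii) and (iii) behave like genuine $p$-variation seminorms of additive functions rather than of general functions on $\Delta_{T}$: both $\pi_{2}(Z)^{1}$ and $\pi_{1,2}(Z)+\pi_{2,1}(Z)+\varphi$ are additive on $\Delta_{T}$ (the first is a path increment, the second is the $\varepsilon$-coefficient of the multiplicative functional $V_{(Z,\varphi)}$, whose additivity was verified in the proof of Theorem \ref{thm:TgtFromCurve}), so the Minkowski step is legitimate and zero seminorm does imply the function vanishes identically. No other obstacle arises.
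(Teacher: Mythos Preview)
Your proof is correct and follows essentially the same approach as the paper's: symmetry and non-negativity are trivial, identity of indiscernibles is read off from Definition~\ref{def:TgtEquiv}, and the triangle inequality follows from Minkowski on each component together with $\sup$-of-sum $\leq$ sum-of-$\sup$s and then taking the maximum. Your explicit observation that $p/2>1$ (so Minkowski applies to component (iii)) is a useful clarification the paper omits.

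One small slip in your final paragraph: the quantity $\pi_{1,2}(Z)+\pi_{2,1}(Z)+\varphi$ is \emph{not} additive on $\Delta_{T}$. Extracting the $\varepsilon$-coefficient from Chen's identity for $V_{(Z,\varphi)}(\varepsilon)$ leaves the cross terms $X^{1}_{s,t}\otimes\pi_{2}(Z)^{1}_{t,u}+\pi_{2}(Z)^{1}_{s,t}\otimes X^{1}_{t,u}$, and the proof of Theorem~\ref{thm:TgtFromCurve} verifies additivity of $\varphi$ alone, not of the full sum. Fortunately this claim is unnecessary: the Minkowski step requires no structural property of the summands, and vanishing of a $p$-variation seminorm already forces the function to vanish on every pair $(s,t)$ simply by taking the two-point partition $\{s,t\}$. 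So the argument stands once you drop that remark.
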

\begin{proof}
Non-negativity and symmetry are inherited from the properties of the
norm on $V$. If $\tilde{d}_{p}\left(\left[Z,\varphi\right]_{X},\left[W,\phi\right]_{Y}\right)=0$
then $\pi_{1}\left(Z\right)=\pi_{1}\left(W\right)$ so that $\left[Z,\varphi\right]$
and $\left[W,\phi\right]$ are in the same tangent space. By definition
\ref{def:TgtEquiv}, $\pi_{2}\left(Z\right)^{1}=\pi_{2}\left(W\right)^{1}$
and \[
\pi_{1,2}\left(Z\right)+\pi_{2,1}\left(Z\right)+\varphi=\pi_{1,2}\left(W\right)+\pi_{2,1}\left(W\right)+\phi\]
imply that they are in the same equivalence class. Hence $\tilde{d}_{p}\left(\left[Z,\varphi\right],\left[W,\phi\right]\right)=0$
if and only if $\left[Z,\varphi\right]_{X}=\left[W,\phi\right]_{Y}$. 

For the triangle inequality, consider $\left[S,\sigma\right]\in T_{R}WG\Omega_{p}\left(V\right)$.
The triangle inequality for $d_{p}$ and the Minkowski inequality
give \begin{align*}
d_{p}\left(\pi_{1}\left(Z\right),\pi_{1}\left(W\right)\right) & \leq d_{p}\left(\pi_{1}\left(Z\right),\pi_{1}\left(S\right)\right)+d_{p}\left(\pi_{1}\left(S\right),\pi_{1}\left(W\right)\right),\\
\left\{ \sum_{l}\left|\pi_{2}\left(Z\right)_{t_{l},t_{l+1}}^{1}-\pi_{2}\left(W\right)_{t_{l},t_{l+1}}^{1}\right|^{p}\right\} ^{\frac{1}{p}}\leq & \left\{ \sum_{l}\left|\pi_{2}\left(Z\right)_{t_{l},t_{l+1}}^{1}-\pi_{2}\left(S\right)_{t_{l},t_{l+1}}^{1}\right|^{p}\right\} ^{\frac{1}{p}}\\
 & +\left\{ \sum_{l}\left|\pi_{2}\left(S\right)_{t_{l},t_{l+1}}^{1}-\pi_{2}\left(W\right)_{t_{l},t_{l+1}}^{1}\right|^{p}\right\} ^{\frac{1}{p}},\end{align*}
and \begin{align*}
\left\{ \sum_{l}\left|\left[\pi_{1,2}\left(Z\right)+\pi_{2,1}\left(Z\right)+\varphi\right]{}_{t_{l},t_{l+1}}\right.\right.\\
-\left.\left.\left[\pi_{1,2}\left(W\right)+\pi_{2,1}\left(W\right)+\phi\right]{}_{t_{l},t_{l+1}}\right|^{\frac{p}{2}}\right\} ^{\frac{2}{p}}\leq & \left\{ \sum_{l}\left|\left[\pi_{1,2}\left(Z\right)+\pi_{2,1}\left(Z\right)+\varphi\right]{}_{t_{l},t_{l+1}}\right.\right.\\
 & -\left.\left.\left[\pi_{1,2}\left(S\right)+\pi_{2,1}\left(S\right)+\sigma\right]{}_{t_{l},t_{l+1}}\right|^{\frac{p}{2}}\right\} ^{\frac{2}{p}}\\
 & +\left\{ \sum_{l}\left|\left[\pi_{1,2}\left(S\right)+\pi_{2,1}\left(S\right)+\sigma\right]{}_{t_{l},t_{l+1}}\right.\right.\\
 & -\left.\left.\left[\pi_{1,2}\left(W\right)+\pi_{2,1}\left(W\right)+\phi\right]{}_{t_{l},t_{l+1}}\right|^{\frac{p}{2}}\right\} ^{\frac{2}{p}}.\end{align*}
Therefore, by definition of the supremum as the least upper bound,
in the previous two equations the supremum over all partitions on
the left hand side is less than the sum of the supremums on the right
hand side. Finally, bounding each of \begin{align*}
d_{p}\left(\pi_{1}\left(Z\right),\pi_{1}\left(W\right)\right),\mbox{ }\sup_{\mathcal{D}}\left\{ \sum_{l}\left|\pi_{2}\left(Z\right)_{t_{l},t_{l+1}}^{1}-\pi_{2}\left(W\right)_{t_{l},t_{l+1}}^{1}\right|^{p}\right\} ^{\frac{1}{p}},\mbox{ and}\\
\sup_{\mathcal{D}}\left\{ \sum_{l}\left|\left[\pi_{1,2}\left(Z\right)+\pi_{2,1}\left(Z\right)+\varphi\right]{}_{t_{l},t_{l+1}}-\left[\pi_{1,2}\left(W\right)+\pi_{2,1}\left(W\right)+\phi\right]{}_{t_{l},t_{l+1}}\right|^{\frac{p}{2}}\right\} ^{\frac{2}{p}}\end{align*}
by the sum\begin{align*}
 & \max\left[d_{p}\left(\pi_{1}\left(Z\right),\pi_{1}\left(S\right)\right)\right.\\
 & \sup_{\mathcal{D}}\left\{ \sum_{l}\left|\pi_{2}\left(Z\right)_{t_{l},t_{l+1}}^{1}-\pi_{2}\left(S\right)_{t_{l},t_{l+1}}^{1}\right|^{p}\right\} ^{\frac{1}{p}}\\
 & \sup_{\mathcal{D}}\left\{ \sum_{l}\left|\left[\pi_{1,2}\left(Z\right)+\pi_{2,1}\left(Z\right)+\varphi\right]{}_{t_{l},t_{l+1}}\right.\right.\\
 & -\left.\left.\left.\left[\pi_{1,2}\left(S\right)+\pi_{2,1}\left(S\right)+\sigma\right]{}_{t_{l},t_{l+1}}\right|^{\frac{p}{2}}\right\} ^{\frac{2}{p}}\right]\\
 & +\max\left[d_{p}\left(\pi_{1}\left(S\right),\pi_{1}\left(W\right)\right)\right.\\
 & \sup_{\mathcal{D}}\left\{ \sum_{l}\left|\pi_{2}\left(S\right)_{t_{l},t_{l+1}}^{1}-\pi_{2}\left(W\right)_{t_{l},t_{l+1}}^{1}\right|^{p}\right\} ^{\frac{1}{p}}\\
 & \sup_{\mathcal{D}}\left\{ \sum_{l}\left|\left[\pi_{1,2}\left(S\right)+\pi_{2,1}\left(S\right)+\sigma\right]{}_{t_{l},t_{l+1}}\right.\right.\\
 & -\left.\left.\left.\left[\pi_{1,2}\left(W\right)+\pi_{2,1}\left(W\right)+\phi\right]{}_{t_{l},t_{l+1}}\right|^{\frac{p}{2}}\right\} ^{\frac{2}{p}}\right]\end{align*}
gives the result.
\end{proof}
Now we are able to state our Lipschitz condition on $F$ which allows
existence solutions. 
\begin{defn}
\label{def:Fcond}Let $Z:WG\Omega_{p}\left(V\right)\rightarrow WG\Omega_{p}\left(V\oplus V\right)$
and $\varphi:WG\Omega_{p}\left(V\right)\rightarrow WG\Omega_{\frac{p}{2}}\left(V\right)$
such that $\pi_{1}\left(Z\left(X\right)\right)=X$ and form the vector
field $F$ such that $F\left(X\right)=\left[Z\left(X\right),\varphi\left(X\right)\right]$. 

We will say such an $F$ is locally Lipschitz near $X_{0}$ if there
exist positive constants $r,\mbox{ }C_{1},\mbox{ and}$ $C_{2}$ such
that for all $X,Y\in B_{r}\left(X_{0}\right)$ \begin{align*}
\tilde{d}_{p}\left(F\left(X\right),F\left(Y\right)\right)\leq & C_{1}d_{p}\left(X,Y\right)\\
\tilde{d}_{q}\left(F\left(X\right),F\left(Y\right)\right)\leq & C_{2}d_{p}\left(X,Y\right)\end{align*}
for $q>p$. 

We will say $F$ is globally Lipschitz if the above relations hold
for all $X,Y\in WG\Omega_{p}\left(V\right)$.
\end{defn}
Now let us introduce our concept of a solution to the flow equation
for a non-differentiable curve. We know from the definition of the
equivalence class, that two tangents (at a fixed point $X$) are in
the same equivalence class if the derivatives at $\varepsilon=0$
of their variational curves are equal, i.e. $\left(Z,\varphi\right)\sim\left(\tilde{Z},\tilde{\varphi}\right)$
if and only if $\left.\frac{d}{d\varepsilon}\right|_{\varepsilon=0}V_{\left(Z,\varphi\right)}\left(\varepsilon\right)=\left.\frac{d}{d\varepsilon}\right|_{\varepsilon=0}V_{\left(\tilde{Z},\tilde{\varphi}\right)}\left(\varepsilon\right)$.
Also, using theorem \ref{thm:TgtFromCurve}, we can associate any
differentiable curve $U$ to a unique equivalence class $\left[Z,\varphi\right]$
such that $\left.\frac{d}{d\varepsilon}\right|_{\varepsilon=0}U\left(\varepsilon\right)=\left.\frac{d}{d\varepsilon}\right|_{\varepsilon=0}V_{\left[Z,\varphi\right]}\left(\varepsilon\right)$.
Hence, a curve $U\left(\tau\right)$ differentiable in our sense is
a classical solution to the flow equation if\[
\lim_{\varepsilon\rightarrow0}\frac{U\left(\tau+\varepsilon\right)-U\left(\tau\right)}{\varepsilon}=\lim_{\varepsilon\rightarrow0}\frac{V_{\left[F_{Z}\left(U\left(\tau\right)\right),F_{\varphi}\left(U\left(\tau\right)\right)\right]}\left(\varepsilon\right)-U\left(\tau\right)}{\varepsilon}\]
for each $\tau\in\left[0,T\right]$. Formally rearranging this equation
leaves us with \[
\lim_{\varepsilon\rightarrow0}\frac{U\left(\tau+\varepsilon\right)-V_{\left[F_{Z}\left(U\left(\tau\right)\right),F_{\varphi}\left(U\left(\tau\right)\right)\right]}\left(\varepsilon\right)}{\varepsilon}=0.\]
This second expression can be defined even for some non-differentiable
curves. The natural metric in which this limit should take place is
of course the $p$-variation metric. However, the following definition
uses $q$ (for $q>p$) instead of $p$ due to technical limitations
which will become clear in the proof.
\begin{defn}
\label{def:LightSoln}A continuous curve $U:\left[0,T\right]\rightarrow\Omega_{q}\left(V\right)$
is said to be a solution to the flow equation\begin{align*}
C'\left(\tau\right)= & F\left(C\left(\tau\right)\right)\\
C\left(0\right)= & X\end{align*}
 if $U\left(0\right)=X$ and \begin{align*}
\lim_{h\downarrow0}h^{-1}\left[d_{q}\left(U\left(\tau+h\right),V_{\left[F_{Z}\left(U\left(\tau\right)\right),F_{\varphi}\left(U\left(\tau\right)\right)\right]}\left(h\right)\right)\right]= & 0\end{align*}
for each $\tau\in\left[0,T\right]$ and some $q>p$. 

$U$ is said to be an $\varepsilon$-solution to the flow equation
if $U\left(0\right)=X$ and\begin{align*}
\lim_{h\downarrow0}h^{-1}\left[d_{p}\left(U\left(\tau+h\right),V_{\left[F_{Z}\left(U\left(\tau\right)\right),F_{\varphi}\left(U\left(\tau\right)\right)\right]}\left(h\right)\right)\right]\leq & \varepsilon\end{align*}
for each $\tau\in\left[0,T\right]$. 
\end{defn}
Roughly speaking, an $\varepsilon$-solution is a curve whose tangent
at a point is in some sense close to the tangent assigned by $F$
at that point. Note that if we were in a Banach space case then the
above would be \[
\lim_{h\downarrow0}\left|\frac{U\left(\tau+h\right)-\left[U\left(\tau\right)+hF\left(U\left(\tau\right)\right)\right]}{h}\right|,\]
 i.e. it would be equivalent to the statement: $U'\left(\tau\right)$
exists and equals $F\left(U\left(\tau\right)\right)$.
\begin{rem}
Recall that for a given tangent vector, a variational curve is not
unique. In fact there are infinitely many choices each associated
to a different Lyons-Victoir extension, indeed a variational curve
for a tangent at $X$ has component in $V\otimes V$ given by $V_{\left[Z,\varphi\right]}\left(\varepsilon\right)^{2}=X^{2}+\varepsilon\left(\pi_{1,2}\left(Z\right)+\pi_{2,1}\left(Z\right)+\varphi\right)+\varepsilon^{2}\left(W^{2}\right)$
where $W$ is an extension of $X^{1}+\pi_{2}\left(Z\right)^{1}$.
Therefore, the above definition may depend on the choice of variational
curve which we associate to the tangent $\left[F_{Z}\left(U\left(\tau\right)\right),F_{\varphi}\left(U\left(\tau\right)\right)\right]$.
However, for the class of vector fields we consider, the canonical
choice of $W^{2}$ is given by $\pi_{2}\left(Z\left(U\left(\tau\right)\right)\right)^{2}$.
We will always assume we make this choice and therefore, we refer
to the variational curve rather than a variational curve. 
\end{rem}
In order to obtain solutions, we first construct an approximate solution
given by an Euler approximation scheme. The only issue is correctly
choosing the Euler {}``polygon'' approximation through the variational
curve associated to a tangent vector.
\begin{lem}
\label{lem:EpSolExistence}Let $F$ be locally Lipschitz near $X_{0}$
in the sense of definition \ref{def:Fcond} and define\begin{align*}
M= & \sup_{X\in B_{r}\left(X_{0}\right)}\left[\tilde{d}_{p}\left(F\left(X\right),0\right),\sup_{\mathcal{D}}\left\{ \sum_{l}\left|\pi_{2}\left(Z\left(X\right)\right)_{t_{l},t_{l+1}}^{2}\right|^{\frac{p}{2}}\right\} ^{\frac{2}{p}}\right]\end{align*}
and set $\alpha=\frac{r}{2M}.$ Then for all $\varepsilon>0$ there
exists an $\varepsilon$-solution $U\in C\left(\left[0,\alpha\right]:B_{r}\left(X_{0}\right)\right)$
of the flow equation with initial value $X_{0}\in WG\Omega_{p}\left(V\right)$
satisfying\begin{equation}
d_{p}\left(U_{\tau},U_{\sigma}\right)\leq\left(1+2\alpha\right)M\left|\tau-\sigma\right|,\mbox{ }\forall\sigma,\tau\in\left[0,\alpha\right].\label{eq:EpsSolnLip}\end{equation}
 \end{lem}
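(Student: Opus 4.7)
The plan is to construct $U$ by a piecewise Euler scheme of mesh size $\delta$ to be chosen at the end: fix a partition $0 = \tau_{0} < \tau_{1} < \cdots < \tau_{N} = \alpha$ with $\tau_{k+1} - \tau_{k} \le \delta$, set $U(0) = X_{0}$, and inductively on $[\tau_{k}, \tau_{k+1}]$ define
\[
U(\tau) := V_{[F_{Z}(U(\tau_{k})),\, F_{\varphi}(U(\tau_{k}))]}(\tau - \tau_{k}),
\]
using the canonical Lyons-Victoir extension prescribed in the remark preceding the lemma, so that the $\varepsilon^{2}$-coefficient of the variational curve is $\pi_{2}(F_{Z}(U(\tau_{k})))^{2}$. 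Because the variational curve reduces to its base point at $\varepsilon = 0$, $U$ is well-defined and continuous; the membership $U(\tau_{k}) \in B_{r}(X_{0})$ is what keeps $F$ available at the next step, so one must verify this as one goes along.

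For the Lipschitz bound \eqref{eq:EpsSolnLip}, I would first take $\sigma, \tau$ in the same subinterval $[\tau_{k}, \tau_{k+1}]$ and subtract the variational curve formulas, getting
\[
(U(\tau) - U(\sigma))^{1} = (\tau-\sigma)\,\pi_{2}(Z_{k})^{1},
\]
\[
(U(\tau) - U(\sigma))^{2} = (\tau-\sigma)\,F_{Z,\varphi}(U(\tau_{k})) + (\tau-\sigma)(\tau+\sigma-2\tau_{k})\,\pi_{2}(Z_{k})^{2},
\]
where $Z_{k} := F_{Z}(U(\tau_{k}))$. Taking the $p$- and $p/2$-variations, each factor controlled by $M$ via the definition and the bound $\tau + \sigma - 2\tau_{k} \le 2\alpha$, gives $d_{p}(U_{\tau}, U_{\sigma}) \le (1 + 2\alpha) M (\tau - \sigma)$; the triangle inequality extends this across subinterval boundaries. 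Combined with $\alpha = r/(2M)$ (and, if needed, shrinking $\alpha$ slightly so that $(1+2\alpha) \le 2$), this both delivers \eqref{eq:EpsSolnLip} and ensures $d_{p}(U(\tau), X_{0}) \le r$ for all $\tau \in [0, \alpha]$, which is the induction hypothesis needed for the scheme to proceed.

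For the $\varepsilon$-solution condition, fix $\tau \in [\tau_{k}, \tau_{k+1})$ and $h > 0$ so small that $\tau + h \le \tau_{k+1}$. Writing $Z' := F_{Z}(U(\tau))$ and subtracting the variational curve formula for $U(\tau + h)$ (centered at $U(\tau_{k})$, run for time $\tau + h - \tau_{k}$) from the one for $V_{[F_{Z}(U(\tau)),F_{\varphi}(U(\tau))]}(h)$ (centered at $U(\tau)$, run for time $h$), the $U(\tau_{k})$ and $(\tau - \tau_{k})$-terms cancel and one is left with
\[
h\bigl[F_{Z,\varphi}(U(\tau_{k})) - F_{Z,\varphi}(U(\tau))\bigr] + 2h(\tau-\tau_{k})\,\pi_{2}(Z_{k})^{2} + h^{2}\bigl[\pi_{2}(Z_{k})^{2} - \pi_{2}(Z')^{2}\bigr]
\]
at the second level, and $h[\pi_{2}(Z_{k})^{1} - \pi_{2}(Z')^{1}]$ at the first. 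Using the $d_{p}$-Lipschitz hypothesis $\tilde{d}_{p}(F(U(\tau_{k})), F(U(\tau))) \le C_{1} d_{p}(U(\tau_{k}), U(\tau))$ combined with the Lipschitz bound from the previous paragraph, and bounding the last bracket crudely by $2M$, gives
\[
h^{-1} d_{p}\bigl(U(\tau+h),\, V_{[F_{Z}(U(\tau)),F_{\varphi}(U(\tau))]}(h)\bigr) \le \bigl[C_{1}(1+2\alpha) + 2\bigr] M \delta + 2 h M.
\]
Letting $h \downarrow 0$ and then choosing $\delta \le \varepsilon / \bigl([C_{1}(1+2\alpha) + 2] M\bigr)$ yields the $\varepsilon$-solution property.

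The main obstacle I anticipate is the bookkeeping in the last step: one has to cleanly separate the part that is linear in $h$ and linear in a $\tilde{d}_{p}$-difference of $F$ (which the Lipschitz hypothesis handles) from the quadratic-in-$(\tau - \tau_{k})$-or-$h$ debris generated by the $\varepsilon^{2} \pi_{2}(Z)^{2}$ term in the variational curve, and to confirm that the latter contributes only an $O(\delta)$ error that the mesh can defeat. The canonical-extension convention fixed in the remark is essential here, since otherwise the $\pi_{2}(Z')^{2}$ term would not be controlled by the same data defining $F$.
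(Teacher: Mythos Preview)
Your proposal is correct and follows essentially the same Euler-scheme approach as the paper: identical construction of $U$, identical computations for the Lipschitz bound and the $\varepsilon$-solution property, and the same use of the $d_p$-Lipschitz hypothesis on $F$ to control the first bracketed term. The only minor discrepancy is in verifying $U(\tau)\in B_r(X_0)$: the paper bounds $d_p(U(\tau_i),U(\tau))\le 2M|\tau-\tau_i|$ directly (using $(\tau-\tau_i)^2\le(\tau-\tau_i)$ for small mesh) so that the triangle inequality gives exactly $2M\alpha=r$, whereas your route via the $(1+2\alpha)M$ Lipschitz constant would overshoot unless $\alpha\le 1/2$, which is why you mention ``shrinking $\alpha$''---but since $\alpha$ is fixed in the statement, you should instead use the paper's sharper step bound for this part.
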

\begin{proof}
Let $\varepsilon>0$ be given and partition the interval $\left[0,\alpha\right]$
into \[
0=\tau_{0}<\tau_{1}<\cdots<\tau_{n}=\alpha\]
such that \[
\max_{i}\left|\tau_{i+1}-\tau_{i}\right|\leq\frac{\varepsilon}{\left(C_{1}+2\right)M}.\]
For $\tau\in\left[\tau_{i},\tau_{i+1}\right)$, define inductively
\begin{align*}
U\left(\tau\right)= & V_{\left[F\left(U_{\tau_{i}}\right)\right]}\left(\tau-\tau_{i}\right)\end{align*}
where $V_{\left[F\left(U_{\tau_{i}}\right)\right]}$ is the variational
curve associated to the tangent $F\left(U_{\tau_{i}}\right)$. More
precisely, we first define the points \begin{align*}
U_{\tau_{1}}^{1}= & X_{0}^{1}+\tau_{1}\pi_{2}\left(F\left(X_{0}\right)\right)^{1}\\
U_{\tau_{1}}^{2}= & X_{0}^{2}+\tau_{1}\left[\pi_{1,2}\left(X_{0}\right)+\pi_{2,1}\left(X_{0}\right)+F_{\varphi}\left(X_{0}\right)\right]++\tau_{1}^{2}\pi_{2}\left(Z\left(X_{0}\right)\right)^{2}\end{align*}
and \begin{align*}
U_{\tau_{i+1}}^{1}= & U_{\tau_{i}}^{1}+\left(\tau_{i+1}-\tau_{i}\right)\pi_{2}\left(F\left(U_{\tau_{i}}\right)\right)^{1}\\
U_{\tau_{i+1}}^{2}= & U_{\tau_{i}}^{2}+\left(\tau_{i+1}-\tau_{i}\right)\left[\pi_{1,2}\left(U_{\tau_{i}}\right)+\pi_{2,1}\left(U_{\tau_{i}}\right)+F_{\varphi}\left(U_{\tau_{i}}\right)\right]\\
 & +\left(\tau_{i+1}-\tau_{i}\right)^{2}\pi_{2}\left(Z\left(U_{\tau_{i}}\right)\right)^{2}\end{align*}
for $i\in\left\{ 1,\ldots,n\right\} $ and then the curve is defined
by \begin{align*}
\left(V_{\left[F\left(U_{\tau_{i}}\right)\right]}\left(\tau\right)\right)^{1}= & U_{\tau_{i}}^{1}+\left(\tau-\tau_{i}\right)\pi_{2}\left(F\left(U_{\tau_{i}}\right)\right)^{1}\\
\left(V_{\left[F\left(U_{\tau_{i}}\right)\right]}\left(\tau\right)\right)^{2}= & U_{\tau_{i}}^{2}+\left(\tau-\tau_{i}\right)\left[\pi_{1,2}\left(U_{\tau_{i}}\right)+\pi_{2,1}\left(U_{\tau_{i}}\right)+F_{\varphi}\left(U_{\tau_{i}}\right)\right]\\
 & +\left(\tau-\tau_{i}\right)^{2}\pi_{2}\left(Z\left(U_{\tau_{i}}\right)\right)^{2}\end{align*}
whenever $\tau\in\left[\tau_{i},\tau_{i+1}\right)$. 

The curve thus defined satisfies $U\left(\tau\right)\in B_{r}\left(X_{0}\right)$.
In fact, for $\tau\in\left[0,\tau_{1}\right)$\begin{align*}
\left\Vert U^{1}\left(\tau\right)-X_{0}^{1}\right\Vert = & \tau\left\Vert \pi_{2}\left(F\left(X_{0}\right)\right)^{1}\right\Vert \\
\left\Vert U^{2}\left(\tau\right)-X_{0}^{2}\right\Vert \leq & \tau\left\Vert F_{Z,\varphi}\left(X_{0}\right)\right\Vert +\tau^{2}\left\Vert \pi_{2}\left(F\left(X_{0}\right)\right)^{2}\right\Vert \end{align*}
so that $d_{p}\left(U\left(\tau\right),X_{0}\right)\leq2\tau_{1}M$.
Similarly, for $\tau\in\left[\tau_{i},\tau_{i+1}\right)$\begin{align*}
\left\Vert U^{1}\left(\tau\right)-U^{1}\left(\tau_{i}\right)\right\Vert = & \left(\tau-\tau_{i}\right)\left\Vert \pi_{2}\left(F\left(U\left(\tau_{i}\right)\right)\right)^{1}\right\Vert \\
\left\Vert U^{2}\left(\tau\right)-U^{2}\left(\tau_{i}\right)\right\Vert \leq & \left(\tau-\tau_{i}\right)\left\Vert F_{Z,\varphi}\left(U\left(\tau_{i}\right)\right)\right\Vert +\left(\tau-\tau_{i}\right)^{2}\left\Vert \pi_{2}\left(Z\left(U\left(\tau_{i}\right)\right)\right)^{2}\right\Vert \end{align*}
implies $d_{p}\left(U\left(\tau_{i}\right),U\left(\tau\right)\right)\leq2\left|\tau_{i+1}-\tau_{i}\right|M$.
Hence, for general $\tau\in\left[0,\alpha\right]$,\begin{align*}
d_{p}\left(U\left(\tau\right),X_{0}\right)\leq d_{p}\left(X_{0},U_{1}\right)+ & \sum_{i=1}^{k-1}d_{p}\left(U\left(\tau_{i}\right),U\left(\tau_{i+1}\right)\right)+d_{p}\left(U\left(\tau_{k}\right),U\left(\tau\right)\right)\\
\leq & 2M\tau_{1}+2M\sum_{i=1}^{k-1}\left|\tau_{i+1}-\tau_{i}\right|+2M\left|\tau-\tau_{k}\right|\\
= & 2M\tau\\
\leq & 2M\alpha=r\end{align*}
where $k$ is such that $\tau\in\left[\tau_{k},\tau_{k+1}\right)$
and we have used $\left(\tau-\tau_{1}\right)^{2}\leq\left(\tau-\tau_{1}\right)$
for a small enough partition size (which can be controlled by taking
small enough $\varepsilon$). To see the Lipschitz condition is satisfied,
first take $\tau,\sigma\in\left[\tau_{i},\tau_{i+1}\right)$ for which
we have \begin{align*}
U_{\tau}^{1}-U_{\sigma}^{1}= & \left(\tau-\sigma\right)\pi_{2}\left(F_{Z}\left(U_{\tau_{i}}\right)\right)^{1}\\
U_{\tau}^{2}-U_{\sigma}^{2}= & \left(\tau-\sigma\right)F_{Z,\varphi}\left(U_{\tau_{i}}\right)\\
 & +\left[\left(\tau-\sigma\right)+2\left(\sigma-\tau_{i}\right)\right]\left(\tau-\sigma\right)\pi_{2}\left(Z\left(U_{\tau_{i}}\right)\right)^{2}\end{align*}
so that, by homogeneity of the metric, $d_{p}\left(U_{\tau},U_{\sigma}\right)\leq CM\left|\tau-\sigma\right|$.
For general $\tau,\sigma\in\left[0,\alpha\right]$ such that $\sigma\in\left[\tau_{m},\tau_{m+1}\right)$,
$\tau\in\left[\tau_{n},\tau_{n+1}\right)$, we have \begin{align*}
U_{\tau}^{1}-U_{\sigma}^{1}= & \sum_{i=m}^{n}\int_{\sigma}^{\tau}\pi_{2}\left(F\left(U_{\tau_{i}}\right)\right)^{1}\chi_{\left[\tau_{i},\tau_{i+1}\right)}\left(\gamma\right)d\gamma\\
U_{\tau}^{2}-U_{\sigma}^{2}= & \sum_{i=m}^{n}\int_{\sigma}^{\tau}\left[F_{Z,\varphi}\left(U_{\tau_{i}}\right)+2\left(\gamma-\tau_{i}\right)\pi_{2}\left(Z\left(U_{\tau_{i}}\right)\right)^{2}\right]\chi_{\left[\tau_{i},\tau_{i+1}\right)}\left(\gamma\right)d\gamma\end{align*}
where we use the Bochner integral in the Banach spaces $\left(V,\left\Vert \cdot\right\Vert _{V}\right)$
and $\left(V\otimes V,\left\Vert \cdot\right\Vert _{V\otimes V}\right)$.
Consequently, \begin{align*}
\left\Vert \left[U_{\tau}^{1}-U_{\sigma}^{1}\right]\right\Vert \leq & \max_{i}\left\Vert \pi_{2}\left(F\left(U_{\tau_{i}}\right)\right)^{1}\right\Vert \left|\tau-\sigma\right|\\
\left\Vert \left[U_{\tau}^{2}-U_{\sigma}^{2}\right]\right\Vert \leq & \max_{i}\left[\left\Vert F_{Z,\varphi}\left(U_{\tau_{i}}\right)\right\Vert +2\alpha\left\Vert \pi_{2}\left(Z\left(U_{\tau_{i}}\right)\right)^{2}\right\Vert \right]\left|\tau-\sigma\right|\end{align*}
which implies\[
d_{p}\left(U_{\tau},U_{\sigma}\right)\leq\left(1+2\alpha\right)M\left|\tau-\sigma\right|.\]

Now let us show that $U$ is indeed an $\varepsilon$-solution. Whenever
$\tau\in\left[\tau_{i},\tau_{i+1}\right)$, for sufficiently small
$h$, we also have $\left(\tau+h\right)\in\left[\tau_{i},\tau_{i+1}\right)$
and hence \begin{align*}
U\left(\tau+h\right)^{1}= & U_{\tau_{i}}^{1}+\left(\tau+h-\tau_{i}\right)\pi_{2}\left(F\left(U_{\tau_{i}}\right)\right)^{1}\\
U\left(\tau+h\right)^{2}= & U_{\tau_{i}}^{2}+\left(\tau+h-\tau_{i}\right)F_{Z,\varphi}\left(U_{\tau_{i}}\right)\\
 & +\left(\tau+h-\tau_{i}\right)^{2}\pi_{2}\left(Z\left(U_{\tau_{i}}\right)\right)^{2}\end{align*}
and\begin{align*}
V_{F\left(U_{\tau}\right)}\left(h\right)^{1}= & U_{\tau}^{1}+h\pi_{2}\left(F\left(U_{\tau}\right)\right)^{1}\\
V_{F\left(U)_{\tau}\right)}\left(h\right)^{2}= & U_{\tau}^{2}+hF_{Z,\varphi}\left(U_{\tau}\right)+h^{2}\pi_{2}\left(Z\left(U_{\tau}\right)\right)^{2}.\end{align*}
This shows, \begin{align*}
U\left(\tau+h\right)^{1}-V_{F\left(U_{\tau}\right)}\left(h\right)^{1}= & \left(U_{\tau_{i}}^{1}-U_{\tau}^{1}\right)+\left(\tau-\tau_{i}\right)\pi_{2}\left(F\left(U_{\tau_{i}}\right)\right)^{1}\\
 & +h\left(\pi_{2}\left(F\left(U_{\tau_{i}}\right)\right)^{1}-\pi_{2}\left(F\left(U_{\tau}\right)\right)^{1}\right)\end{align*}
and \begin{align*}
U\left(\tau+h\right)^{2}-V_{F\left(U_{\tau}\right)}\left(h\right)^{2}= & \left(U_{\tau_{i}}^{2}-U_{\tau}^{2}\right)+\left(\tau-\tau_{i}\right)F_{Z,\varphi}\left(U_{\tau_{i}}\right)\\
 & +h\left(F_{Z,\varphi}\left(U_{\tau_{i}}\right)-F_{Z,\varphi}\left(U_{\tau}\right)\right)\\
 & \left[\left(\tau-\tau_{i}\right)^{2}+2h\left(\tau-\tau_{i}\right)\right]\pi_{2}\left(Z\left(U_{\tau_{i}}\right)\right)^{2}\\
 & +h^{2}\left[\pi_{2}\left(Z\left(U_{\tau_{i}}\right)\right)^{2}-\pi_{2}\left(Z\left(U_{\tau}\right)\right)^{2}\right].\end{align*}
Since, $\tau_{i}<\tau<\tau_{i+1}$ \begin{align*}
U_{\tau}^{1}= & U_{\tau_{i}}^{1}+\left(\tau-\tau_{i}\right)\pi_{2}\left(F\left(U_{\tau_{i}}\right)\right)^{1}\\
U_{\tau}^{2}= & U_{\tau_{i}}^{2}+\left(\tau-\tau_{i}\right)F_{Z,\varphi}\left(F\left(U_{\tau_{i}}\right)\right)\\
 & +\left(\tau-\tau_{i}\right)^{2}\pi_{2}\left(Z\left(U_{\tau_{i}}\right)\right)^{2}\end{align*}
and so\begin{align*}
\left(U_{\tau_{i}}^{1}-U_{\tau}^{1}\right)+\left(\tau-\tau_{i}\right)\pi_{2}\left(F\left(U_{\tau_{i}}\right)\right)^{1}= & 0\\
\left(U_{\tau_{i}}^{2}-U_{\tau}^{2}\right)+\left(\tau-\tau_{i}\right)F_{Z,\varphi}\left(U_{\tau_{i}}\right)+\left(\tau-\tau_{i}\right)^{2}\pi_{2}\left(Z\left(U_{\tau_{i}}\right)\right)^{2} & =0.\end{align*}
Therefore, \begin{align}
h^{-1}\left(U\left(\tau+h\right)-V_{F\left(U_{\tau}\right)}\left(h\right)\right)^{1}= & \left(\pi_{2}\left(F\left(U_{\tau_{i}}\right)\right)^{1}-\pi_{2}\left(F\left(U_{\tau}\right)\right)^{1}\right)\label{eq:EpsSolEqn1}\\
h^{-1}\left(U\left(\tau+h\right)-V_{F\left(U_{\tau}\right)}\left(h\right)\right)^{2}= & \left(F_{Z,\varphi}\left(U_{\tau_{i}}\right)-F_{Z,\varphi}\left(U_{\tau}\right)\right)\label{eq:EpsSolEqn2}\\
 & +2\left(\tau-\tau_{i}\right)\pi_{2}\left(Z\left(U_{\tau_{i}}\right)\right)^{2}\nonumber \\
 & +h\left[\pi_{2}\left(Z\left(U_{\tau_{i}}\right)\right)^{2}-\pi_{2}\left(Z\left(U_{\tau}\right)\right)^{2}\right]\nonumber \end{align}
and by the Lipschitz property of $F$ and homogeneity of the metric,
\begin{align*}
h^{-1}d_{p}\left(U\left(\tau+h\right),V_{F\left(U_{\tau}\right)}\left(h\right)\right)\leq & C_{1}d_{p}\left(U_{\tau_{i}},U_{\tau}\right)\\
 & +2M\left|\tau-\tau_{i}\right|+2hM.\end{align*}
Also, \begin{align*}
U_{\tau}^{1}= & U_{\tau_{i}}^{1}+\left(\tau-\tau_{i}\right)\pi_{2}\left(F_{Z}\left(U_{\tau_{i}}\right)\right)^{1}\\
U_{\tau}^{2}= & U_{\tau_{i}}^{2}+\left(\tau-\tau_{i}\right)F_{Z,\varphi}\left(U_{\tau_{i}}\right)+\left(\tau-\tau_{i}\right)^{2}\pi_{2}\left(Z\left(U_{\tau_{i}}\right)\right)^{2}\end{align*}
\begin{align*}
d_{p}\left(U_{\tau_{i}},U_{\tau}\right)\leq & M\left|\tau-\tau_{i}\right|.\end{align*}
This implies\begin{align*}
h^{-1}d_{p}\left(U\left(\tau+h\right),V_{F\left(U_{\tau}\right)}\left(h\right)\right)\leq & \left(C_{1}+2\right)M\left|\tau-\tau_{i}\right|+2hM\\
\leq & \varepsilon+2hM\end{align*}
so letting $h\rightarrow0$ gives the result.\end{proof}
\begin{rem}
If the map $Z$ which defines $F$ also satisfies the Lipschitz property,
then the interval on which the $\varepsilon$-solution is defined
has length greater than $\frac{1}{C}$ where $C$ depends only on
the Lipschitz constants and the initial data. Indeed, for all $X\in B_{r}\left(X_{0}\right)$,
$\tilde{d}_{p}\left(F\left(X\right),0\right)\leq C_{1}r+C_{2}$ and
$d_{p}\left(Z\left(X\right),0\right)\leq C_{3}r+C_{4}$ where the
$C_{i}$ are Lipschitz constants or the distance of $X_{0}$ from
$0$. Hence, $\alpha=\frac{r}{M}\geq\frac{r}{C_{i}r+C_{i+1}}$ and
choosing $r\geq1$ gives $\alpha\geq\frac{1}{C_{i}+C_{i+1}}$.
\end{rem}
Consequently, we have an approximate solution for any level of closeness
on a fixed interval which is independent of how close the approximation
is. From here, the Lipschitz property of $U$ given by equation (\ref{eq:EpsSolnLip})
means that a family consisting of $\varepsilon_{n}$ approximations
where $\varepsilon_{n}\rightarrow0$ as $n\rightarrow\infty$ is equicontinuous.
Indeed we have the following.
\begin{lem}
\label{lem:CgtEpsSoln}Let $\left\{ U^{n}\right\} $ be a sequence
of paths in $C\left(\left[0,T\right],WG\Omega_{p}\right)$ such that
for each $n$, $U^{n}$ is a $\varepsilon_{n}$-solution to the flow
equation where $\varepsilon_{n}\rightarrow0$ as $n\rightarrow\infty$.
Then the collection $\left\{ U^{n}\right\} $ has a sub-sequence $\left\{ U^{n_{k}}\right\} $
which converges in $C\left(\left[0,T\right],\Omega_{q}\right)$ equipped
with uniform topology, for all $q>p$.\end{lem}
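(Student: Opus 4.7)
The plan is to combine the a priori Lipschitz-in-time estimate enjoyed by the approximate solutions with the extrinsic compactness Theorem \ref{thm:ExtrinCompac} to produce compactness in $C\!\left([0,T],\Omega_q\right)$ via an Arzel\`a-Ascoli argument. The necessity of working in $\Omega_q$ rather than $\Omega_p$ is precisely because there is no analogous intrinsic compactness result available for $d_p$-bounded subsets of $\Omega_p$, and the interpolation bound (\ref{eq:PVarBoundsqVar}) is the bridge between the two metrics.

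First, I would observe that the $\varepsilon_n$-solutions produced by the Euler scheme in Lemma \ref{lem:EpSolExistence} all take values in the $d_p$-ball $B_r(X_0)$ and satisfy the Lipschitz bound (\ref{eq:EpsSolnLip}),
\[
d_p(U^n_\tau, U^n_\sigma) \le (1+2\alpha) M |\tau - \sigma|,
\]
with $M$ depending only on $F$ and $X_0$, independent of $n$. Consequently, the set
\[
\mathcal{A} := \{ U^n(\tau) : n \ge 1,\ \tau \in [0,T] \} \subset \Omega_p(V)
\]
is $d_p$-bounded, and assuming $V$ is finite dimensional, Theorem \ref{thm:ExtrinCompac} yields that the closure of $\mathcal{A}$ is compact in $\Omega_q$ for every $q > p$.

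Second, I would upgrade the Lipschitz control to equicontinuity in the $d_q$ metric. Applying (\ref{eq:PVarBoundsqVar}) to $X = U^n_\tau$ and $Y = U^n_\sigma$, and observing that the constant appearing there depends only on the suprema $\sup_t |X^1_{0,t}|$, $\sup_t|Y^1_{0,t}|$ and analogous second-level quantities, all of which are bounded uniformly for elements of $\mathcal{A}$, I obtain
\[
d_q(U^n_\tau, U^n_\sigma) \le C\, d_p(U^n_\tau, U^n_\sigma)^{p/q} \le C'\, |\tau - \sigma|^{p/q}
\]
with $C'$ independent of $n$, $\tau$, $\sigma$. This uniform-in-$n$ H\"older estimate is precisely the equicontinuity hypothesis for Arzel\`a-Ascoli applied to maps from $[0,T]$ into the compact metric space $(\overline{\mathcal{A}}, d_q)$, so the desired subsequence can be extracted.

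The key point is not any delicate estimate but the careful matching of metrics: the Euler construction yields Lipschitz control in the strong $d_p$ metric, Theorem \ref{thm:ExtrinCompac} supplies pointwise precompactness only in the weaker $d_q$ metric, and (\ref{eq:PVarBoundsqVar}) interpolates between them. This chain of substitutions is what forces the limiting curve $U$ to be realized in $\Omega_q$ rather than $\Omega_p$, and is the structural reason behind definition \ref{def:LightSoln} using $d_q$; it is the only obstacle of substance, since once the metric bookkeeping is settled the remainder of the argument is a direct appeal to Arzel\`a-Ascoli.
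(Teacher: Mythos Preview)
Your argument is correct and is exactly the expansion of the paper's one-line proof, which simply says ``This follows from the Ascoli--Arzel\`a theorem for metric spaces and Theorem~\ref{thm:ExtrinCompac}.'' You have identified precisely the two ingredients the paper invokes---the uniform Lipschitz bound (\ref{eq:EpsSolnLip}) for equicontinuity and Theorem~\ref{thm:ExtrinCompac} for pointwise precompactness in $\Omega_q$---and made explicit the role of the interpolation estimate (\ref{eq:PVarBoundsqVar}) in passing from $d_p$-Lipschitz to $d_q$-equicontinuity, which the paper leaves implicit.
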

\begin{proof}
This follows from the Ascoli-Arzela theorem for metric spaces and
theorem\ref{thm:ExtrinCompac}.
\end{proof}
A natural question to consider is whether or not the limit of $\varepsilon_{n}$
solutions given by the preceding lemma is a solution. The proof that
this is the case relies on the vector field also being Lipschitz in
the $q$ variation sense and follows the same arguments as given in
lemma \ref{lem:EpSolExistence}.
\begin{lem}
\label{lem:LimIsSoln}Given a convergent sub-sequence $\left\{ U^{n}\right\} $
of $\varepsilon_{n}$-solutions of the flow equation, the limit $U\in C\left(\left[0,\alpha\right];\Omega_{q}\right)$
is a solution of the flow equation.\end{lem}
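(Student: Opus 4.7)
The plan is to mimic the estimate of Lemma~\ref{lem:EpSolExistence}, now in the $d_q$-metric, and to push the approximate-solution property through the uniform limit $U^{n}\to U$. Fix $\tau\in[0,\alpha]$ and $h>0$; for each $n$ apply the triangle inequality to decompose
\begin{align*}
d_q\bigl(U(\tau+h),V_{[F(U(\tau))]}(h)\bigr)
&\le d_q\bigl(U(\tau+h),U^{n}(\tau+h)\bigr) \\
&\quad + d_q\bigl(U^{n}(\tau+h),V_{[F(U^{n}(\tau))]}(h)\bigr) \\
&\quad + d_q\bigl(V_{[F(U^{n}(\tau))]}(h),V_{[F(U(\tau))]}(h)\bigr),
\end{align*}
and denote the three summands $T_{1},T_{2},T_{3}$.

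The first is dominated by $\delta_{n}:=\sup_{\sigma}d_q(U^{n}(\sigma),U(\sigma))$, which tends to $0$ by Lemma~\ref{lem:CgtEpsSoln}. For $T_{2}$ I would repeat the calculation leading to (\ref{eq:EpsSolEqn1})--(\ref{eq:EpsSolEqn2}) in Lemma~\ref{lem:EpSolExistence} essentially verbatim, but at the final step invoke the $d_q$-Lipschitz bound $\tilde d_q(F(X),F(Y))\le C_{2}d_p(X,Y)$ of Definition~\ref{def:Fcond} instead of the $d_p$ one; together with the uniform Lipschitz estimate (\ref{eq:EpsSolnLip}) and the control of the Euler mesh by $\varepsilon_{n}$, this yields $h^{-1}T_{2}\le C'\varepsilon_{n}+O(h)$ as $h\downarrow 0$. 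For $T_{3}$, writing the two variational curves out componentwise produces a level-$1$ difference of the form $(U^{n}(\tau)^{1}-U(\tau)^{1})+h\bigl[\pi_{2}(F_Z(U^{n}(\tau)))^{1}-\pi_{2}(F_Z(U(\tau)))^{1}\bigr]$ and an analogous level-$2$ difference with an additional $h^{2}$ term; applying the Lipschitz bound on $F$ together with the uniform bound $M$ on $\pi_{2}(Z(\cdot))^{2}$ over $B_{r}(X_{0})$ from Lemma~\ref{lem:EpSolExistence} gives
\[
h^{-1}T_{3}\le h^{-1}\delta_{n}+C_{2}\,d_p\bigl(U^{n}(\tau),U(\tau)\bigr)+O(h).
\]

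Finally I would close by a diagonal choice: for each $h$ pick $n=n(h)$ so large that $\delta_{n(h)}\le h^{2}$ and $\varepsilon_{n(h)}\to 0$ as $h\downarrow 0$, which immediately sends $T_{1}/h$ and $T_{2}/h$ to $0$. The main obstacle is the residual middle contribution $C_{2}\,d_p(U^{n(h)}(\tau),U(\tau))$ in the bound on $T_{3}/h$: although every $U^{n}(\tau)$ and the limit $U(\tau)$ all lie in the fixed $d_p$-ball $B_{r}(X_{0})$, convergence in $d_q$ does not by itself force convergence in $d_p$. Here I would combine the uniform $d_p$-Lipschitz bound (\ref{eq:EpsSolnLip}) on the family $\{U^{n}\}$, the interpolation inequality (\ref{eq:PVarBoundsqVar}) between $d_p$ and $d_q$, and, if necessary, passage to a further subsequence, to arrange $d_p(U^{n(h)}(\tau),U(\tau))\to 0$ along the diagonal sequence. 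This is exactly the technical point flagged in the remark preceding the lemma that forces the extra $q$-variation Lipschitz hypothesis on $F$ and explains why the solution $U$ is sought in $\Omega_{q}$ rather than $\Omega_{p}$.
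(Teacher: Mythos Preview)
The paper's argument is structurally different from yours: it never introduces the triangle decomposition through $U$. It works entirely with the approximations $U^{n}$, repeats the calculation (\ref{eq:EpsSolEqn1})--(\ref{eq:EpsSolEqn2}) of Lemma~\ref{lem:EpSolExistence} but measured in $d_{q}$ (this is precisely your $T_{2}$ estimate), and arrives at the uniform bound
\[
h^{-1}d_{q}\bigl(U^{n}(\tau+h),V_{F(U^{n}_{\tau})}(h)\bigr)\le \frac{(C_{2}+2)M^{*}}{C_{1}+2M}\,\varepsilon_{n}+2hM^{*}.
\]
It then takes $h\to 0$ followed by $n\to\infty$ and declares the result proved. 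Your terms $T_{1}$ and $T_{3}$ never arise, so the $d_{p}$-versus-$d_{q}$ difficulty you wrestle with is simply not encountered.

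Your decomposition is the honest way to make the passage from $U^{n}$ to $U$ explicit, and one may fairly say the paper's last sentence glosses over exactly this point. But the mechanism you propose for the residual term $C_{2}\,d_{p}(U^{n(h)}(\tau),U(\tau))$ in $T_{3}$ is not valid. The interpolation inequality (\ref{eq:PVarBoundsqVar}) reads $d_{q}\le C\,d_{p}^{p/q}$: it bounds the \emph{weaker} metric by the \emph{stronger} one, so it cannot upgrade $d_{q}$-convergence of $U^{n}$ to $d_{p}$-convergence. The time-Lipschitz estimate (\ref{eq:EpsSolnLip}) concerns regularity in $\tau$, not convergence in $n$, and passing to a further subsequence does not help since nothing in the construction forces $d_{p}(U^{n}(\tau),U(\tau))\to 0$. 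This is a genuine gap in your argument as written. If you want to salvage the decomposition route, the clean fix is to bound the $h$-linear part of $T_{3}$ directly in $d_{q}$ rather than via the hypothesis $\tilde d_{q}(F(X),F(Y))\le C_{2}d_{p}(X,Y)$; but Definition~\ref{def:Fcond} as stated does not give you a $d_{q}$-to-$d_{q}$ Lipschitz bound on $F$, which is why the paper's more oblique route---staying with $U^{n}$ throughout and never comparing to $U$ at fixed $h$---is the one actually taken.
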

\begin{proof}
Let us first consider the distance involving the approximating sequence,
i.e. $d_{q}\left(U^{n}\left(\tau+h\right),V_{F\left(U^{n}\left(\tau\right)\right)}\left(h\right)\right).$
By equations (\ref{eq:EpsSolEqn1}) and (\ref{eq:EpsSolEqn2}) in
the proof of lemma \ref{lem:EpSolExistence}, we have for $\tau\in\left[\tau_{i},\tau_{i+1}\right)$
and sufficiently small $h$

\begin{align*}
h^{-1}\left(U^{n}\left(\tau+h\right)-V_{F\left(U_{\tau}^{n}\right)}\left(h\right)\right)^{1}= & \left(\pi_{2}\left(F\left(U_{\tau_{i}}^{n}\right)\right)^{1}-\pi_{2}\left(F\left(U_{\tau}^{n}\right)\right)^{1}\right)\\
h^{-1}\left(U^{n}\left(\tau+h\right)-V_{F\left(U_{\tau}^{n}\right)}\left(h\right)\right)^{2}= & \left(F_{Z,\varphi}\left(U_{\tau_{i}}^{n}\right)-F_{Z,\varphi}\left(U_{\tau}^{n}\right)\right)\\
 & +2\left(\tau-\tau_{i}\right)\pi_{2}\left(Z\left(U_{\tau_{i}}^{n}\right)\right)^{2}\\
 & +h\left[\pi_{2}\left(Z\left(U_{\tau_{i}}^{n}\right)\right)^{2}-\pi_{2}\left(Z\left(U_{\tau}^{n}\right)\right)^{2}\right].\end{align*}
Letting \[
M^{*}=\sup_{X\in B_{r}\left(X_{0}\right)}\left[\tilde{d}_{q}\left(F\left(X\right),0\right),\sup_{\mathcal{D}}\left\{ \sum_{l}\left|\pi_{2}\left(Z\left(X\right)\right)_{t_{l},t_{l+1}}^{2}\right|^{\frac{q}{2}}\right\} ^{\frac{2}{q}}\right]\]
and, by using the property that $F$ is also Lipschitz in the $q$
variation sense, \begin{align*}
h^{-1}d_{q}\left(U^{n}\left(\tau+h\right),V_{F\left(U_{\tau}^{n}\right)}\left(h\right)\right)\leq & C_{2}d_{p}\left(U_{\tau_{i}},U_{\tau}\right)\\
 & +2M^{*}\left|\tau-\tau_{i}\right|+2hM^{*}\\
\leq & \left(C_{2}+2\right)M^{*}\left|\tau-\tau_{i}\right|+2hM^{*}\\
\leq & \left(C_{2}+2\right)M^{*}\frac{\varepsilon_{n}}{\left(C_{1}+2M\right)}+2hM^{*}.\end{align*}
Hence, \[
\lim_{h\rightarrow0}h^{-1}d_{p}\left(U^{n}\left(\tau+h\right),V_{F\left(U_{\tau}^{n}\right)}\left(h\right)\right)=\frac{\left(C_{2}+2\right)M^{*}}{C_{1}+2M}\varepsilon_{n}\]
and so letting $n\rightarrow\infty$ gives the result.\end{proof}
\begin{lem}
\label{lem:Uniqueness}If a solution exists, then it is unique.\end{lem}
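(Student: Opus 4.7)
The plan is to prove uniqueness by a Gronwall-type argument. Let $U,\tilde U$ be two solutions on $[0,\alpha]$ with $U(0) = \tilde U(0) = X$, and set $\eta(\tau) := d_q(U(\tau), \tilde U(\tau))$, so that $\eta(0) = 0$; the goal is to conclude $\eta \equiv 0$. A preliminary step is to argue that both solutions remain in the $p$-variation ball $B_r(X_0)$ where $F$ is Lipschitz (shrinking $\alpha$ if necessary). This can be done by combining $d_q$-continuity with the $p$-variation bounds inherited from the Euler schemes of Lemma~\ref{lem:EpSolExistence} used to construct each solution, together with the fact that $p$-variation is lower semicontinuous under $d_q$-convergence in finite dimensions.

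The main estimate is the triangle inequality applied with the variational curve at the current point as a pivot:
$$\eta(\tau+h) \le d_q\!\big(U(\tau+h),\, V_{F(U(\tau))}(h)\big) + d_q\!\big(V_{F(U(\tau))}(h),\, V_{F(\tilde U(\tau))}(h)\big) + d_q\!\big(V_{F(\tilde U(\tau))}(h),\, \tilde U(\tau+h)\big).$$
The outer two terms are $o(h)$ as $h\downarrow 0$ directly from Definition~\ref{def:LightSoln}. For the middle term, I expand the variational curves explicitly, observing that at tensor levels $1$ and $2$ their differences have the form $(U^i-\tilde U^i) + h(\cdot) + h^2(\cdot)$, and apply a Minkowski-type estimate together with the definition of $\tilde d_q$ and the Lipschitz hypothesis of Definition~\ref{def:Fcond} to obtain
$$d_q\!\big(V_{F(U(\tau))}(h),\, V_{F(\tilde U(\tau))}(h)\big) \le \eta(\tau) + h\,\tilde d_q(F(U(\tau)),F(\tilde U(\tau))) + O(h^2) \le \eta(\tau) + hC_2\, d_p(U(\tau), \tilde U(\tau)) + O(h^2),$$
where the $O(h^2)$ term comes from the $\pi_2(Z)^2$-piece and is uniformly bounded on $B_r(X_0)$.

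The principal technical difficulty is that the Lipschitz condition produces $d_p$, not $d_q$, on the right-hand side, so Gronwall does not close in $\eta$ alone. I would overcome this by also tracking the auxiliary quantity $\eta_p(\tau) := d_p(U(\tau), \tilde U(\tau))$ (well-defined and bounded by $2r$ on $B_r(X_0)$) and running a coupled argument: since $\eta_p(0) = 0$ and the first Lipschitz bound $\tilde d_p \le C_1 d_p$ gives control of $\eta_p$ once one bounds the $d_p$-analogues of the outer error terms (for which one uses confinement to $B_r(X_0)$ and the interpolation estimate~\eqref{eq:PVarBoundsqVar}), one obtains an inequality of the form $\eta(\tau+h) \le (1+Ch)\eta(\tau) + o(h)$ on a small subinterval. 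Standard Gronwall together with iteration over the partition used to construct the $\varepsilon$-solutions then forces $\eta \equiv 0$ on all of $[0,\alpha]$; this last bridging of the $p$- and $q$-variation metrics is the only nontrivial step, everything else being a mechanical expansion.
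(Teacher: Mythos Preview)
Your overall architecture is exactly that of the paper: set $g(\tau)=d_q(U(\tau),\tilde U(\tau))$, pivot through the variational curves $V_{F(U(\tau))}(h)$ and $V_{F(\tilde U(\tau))}(h)$ via the triangle inequality, use the solution property to kill the two outer terms, expand the middle term, and finish with a Dini--Gronwall argument. On that level the proposal matches the paper's proof.

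Where you diverge is in the treatment of the middle term. The paper simply asserts
\[
d_q\!\bigl(V_{F(U(\tau))}(\varepsilon),\,V_{F(\tilde U(\tau))}(\varepsilon)\bigr)
\le d_q(U(\tau),\tilde U(\tau)) + C\varepsilon\, d_q(U(\tau),\tilde U(\tau)) + \varepsilon^2 d_q\bigl(Z(U(\tau)),Z(\tilde U(\tau))\bigr),
\]
i.e.\ it writes $d_q$ on the right-hand side and closes the loop in $g$ directly. You correctly observe that Definition~\ref{def:Fcond} literally gives $\tilde d_q(F(X),F(Y))\le C_2\,d_p(X,Y)$, so the bound one actually obtains carries $d_p$, not $d_q$. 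The paper does not comment on this discrepancy.

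The problem is that your proposed remedy does not close either. To run a coupled argument in $\eta_p(\tau)=d_p(U(\tau),\tilde U(\tau))$ you would need the outer terms $d_p\!\bigl(U(\tau+h),V_{F(U(\tau))}(h)\bigr)$ to be $o(h)$, but Definition~\ref{def:LightSoln} only provides this in $d_q$. The interpolation estimate~\eqref{eq:PVarBoundsqVar} reads $d_q\le C\,d_p^{p/q}$ and therefore goes the wrong way: it lets you pass from $d_p$-smallness to $d_q$-smallness, not the reverse, so confinement to $B_r(X_0)$ together with~\eqref{eq:PVarBoundsqVar} cannot manufacture a $d_p$-solution property out of a $d_q$-one. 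Likewise, appealing to ``the Euler schemes used to construct each solution'' is circular: the whole point of the lemma is to compare an arbitrary solution (in the sense of Definition~\ref{def:LightSoln}) with the one you have built, and an arbitrary solution is not a priori an Euler limit with the Lipschitz bound~\eqref{eq:EpsSolnLip}. In short: you have put your finger on a genuine soft spot that the paper glosses over, but the patch you sketch does not repair it.
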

\begin{proof}
Suppose there exist two solutions $U$ and $\tilde{U}$ of the flow
equation with the same initial data. Consider the function \[
g\left(\tau\right):=d_{q}\left(U\left(\tau\right),\tilde{U}\left(\tau\right)\right).\]
Now, \begin{align*}
\frac{g\left(\tau+\varepsilon\right)-g\left(\tau\right)}{\varepsilon}\leq & \frac{d_{q}\left(U\left(\tau+\varepsilon\right),V_{F\left(U\left(\tau\right)\right)}\left(\varepsilon\right)\right)}{\varepsilon}+\frac{d_{q}\left(V_{F\left(\tilde{U}\left(\tau\right)\right)}\left(\varepsilon\right),\tilde{U}\left(\tau+\varepsilon\right)\right)}{\varepsilon}\\
 & +\frac{d_{q}\left(V_{F\left(U\left(\tau\right)\right)}\left(\varepsilon\right),V_{F\left(\tilde{U}\left(\tau\right)\right)}\left(\varepsilon\right)\right)-d_{q}\left(U\left(\tau\right),\tilde{U}\left(\tau\right)\right)}{\varepsilon}.\end{align*}
The first two terms on the right hand side tend to zero as $\varepsilon\rightarrow0$
as $U$ and $\tilde{U}$ are solutions, so let us examine the final
term. Recall that \begin{align*}
V_{F\left(U\left(\tau\right)\right)}\left(\varepsilon\right)^{1}= & U^{1}\left(\tau\right)+\varepsilon\pi_{2}\left(F\left(U\left(\tau\right)\right)\right)^{1}\\
V_{F\left(U\left(\tau\right)\right)}\left(\varepsilon\right)^{2}= & U^{2}\left(\tau\right)+\varepsilon F_{Z,\varphi}\left(U\left(\tau\right)\right)+\varepsilon^{2}\pi_{2}\left(Z\left(U\left(\tau\right)\right)\right)^{2}\end{align*}
so that by the Lipschitz property of $F$ and the homogeneity of the
distance, \begin{align*}
d_{q}\left(V_{F\left(U\left(\tau\right)\right)}\left(\varepsilon\right),V_{F\left(\tilde{U}\left(\tau\right)\right)}\left(\varepsilon\right)\right)\leq & d_{q}\left(U\left(\tau\right),\tilde{U}\left(\tau\right)\right)+C\varepsilon d_{q}\left(U\left(\tau\right),\tilde{U}\left(\tau\right)\right)\\
 & +\varepsilon^{2}d_{q}\left(Z\left(U\left(\tau\right)\right),Z\left(\tilde{U}\left(\tau\right)\right)\right).\end{align*}
Therefore, $D^{+}g\left(\tau\right)\leq Cg\left(\tau\right)$ where
$D^{+}$ indicates the upper Dini derivative\[
\limsup_{\varepsilon\downarrow0}\frac{g\left(\tau+\varepsilon\right)-g\left(\tau\right)}{\varepsilon}.\]
Now $g\left(\tau\right)$ is bounded by the upper Riemann integral
of the Dini derivative so that\begin{align*}
g\left(\tau\right) & \leq\bar{\int_{0}^{\tau}}D^{+}g\left(\sigma\right)d\sigma\\
 & \leq\bar{C\int_{0}^{\tau}}g\left(\sigma\right)d\sigma\\
 & =C\int_{0}^{\tau}g\left(\sigma\right)d\sigma\end{align*}
and an application of Gronwall's inequality gives $g\left(\tau\right)=0$. 

Combining the above results, we have the following.\end{proof}
\begin{thm}
\label{thm:LocalExist}If $F$ is a locally Lipschitz near $X_{0}$
vector field on $WG\Omega_{p}$, then there exists a unique solution
$U:\left[0,\alpha\right]\rightarrow\Omega_{q}\left(V\right)$ to the
flow equation for $q>p$.\end{thm}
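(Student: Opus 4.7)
The plan is to assemble the theorem from the four technical lemmas that precede it, so essentially all of the substantive work has already been done and what remains is bookkeeping. First, I would fix a sequence $\varepsilon_n\downarrow 0$ and invoke lemma \ref{lem:EpSolExistence} to produce for each $n$ an $\varepsilon_n$-solution $U^n\in C([0,\alpha];B_r(X_0))$ of the flow equation with $\alpha=r/(2M)$. The key feature is that $\alpha$ and $M$ depend only on the local Lipschitz data of $F$ near $X_0$ and not on $n$, so every $U^n$ is defined on a common interval, remains in the ball $B_r(X_0)$, and satisfies the uniform Lipschitz bound $d_p(U^n_\tau,U^n_\sigma)\leq (1+2\alpha)M|\tau-\sigma|$ from \eqref{eq:EpsSolnLip}.

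Next, I would apply lemma \ref{lem:CgtEpsSoln} to the sequence $\{U^n\}$. The uniform bound on the $p$-variation of $U^n_\tau$ puts the image $\{U^n_\tau:n\in\mathbb{N},\ \tau\in[0,\alpha]\}$ into a $d_p$-bounded subset of $\Omega_p(V)$, which by theorem \ref{thm:ExtrinCompac} is relatively compact in $\Omega_q(V)$ for any $q>p$; combined with the uniform Lipschitz estimate in $\tau$, an Ascoli-Arzela argument on $C([0,\alpha];\Omega_q(V))$ extracts a subsequence $U^{n_k}$ converging uniformly to some $U\in C([0,\alpha];\Omega_q(V))$.

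I would then invoke lemma \ref{lem:LimIsSoln} to conclude that this limit $U$ is actually a solution of the flow equation in the sense of definition \ref{def:LightSoln}; this is where the $q$-variation Lipschitz clause in definition \ref{def:Fcond} is used, to pass the error estimate $h^{-1}d_q(U^{n_k}(\tau+h),V_{F(U^{n_k}_\tau)}(h))\lesssim \varepsilon_{n_k}+hM^{*}$ through the limit. Finally, lemma \ref{lem:Uniqueness} delivers uniqueness of such a solution via the Gronwall argument applied to $g(\tau)=d_q(U(\tau),\tilde U(\tau))$, completing the proof.

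The only conceptual obstacle, already absorbed into the design of the lemmas, is the absence of an intrinsic compactness theorem in $\Omega_p(V)$ for $d_p$-bounded sets. This is why both the compactness extraction in the second step and the hypothesis on $F$ used in the third step are formulated in the $q$-variation metric rather than the $p$-variation one, and it is exactly this mismatch that forces the solution $U$ to live a priori only in $\Omega_q(V)$ even though the vector field and the approximating curves are built in $WG\Omega_p(V)$.
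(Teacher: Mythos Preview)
Your proposal is correct and matches the paper's own argument exactly: the theorem is obtained simply by combining Lemmas \ref{lem:EpSolExistence}, \ref{lem:CgtEpsSoln}, \ref{lem:LimIsSoln}, and \ref{lem:Uniqueness}, and the paper says nothing more than this. Your additional commentary on why the passage to $q>p$ is forced by the lack of intrinsic compactness in $\Omega_p(V)$ is accurate and reflects the paper's own discussion.
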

\begin{rem}
Using the preceding arguments we would not necessarily have a global
solution even for a globally Lipschitz vector field. This is because
in our definition we do not assume a Lipschitz type condition on the
projection $\pi_{2}\left(Z\left(\cdot\right)\right)^{2}$ which appears
in the definition of the length of the interval. If however we impose
the slightly stronger condition that both the maps $Z$ and $\varphi$
are Lipschitz, then the induced vector field will be Lipschitz and,
moreover, the length of the interval where the solution is defined
is larger than a constant which depends only on the Lipschitz constants
of $Z$ and $\varphi$. In this setting, we have global solutions.\end{rem}
\begin{thm}
\label{thm:GlobalSoln}Let $Z:WG\Omega_{p}\left(V\right)\rightarrow WG\Omega_{p}\left(V\oplus V\right)$
and $\varphi:WG\Omega_{p}\left(V\right)\rightarrow WG\Omega_{\frac{p}{2}}\left(V\right)$
be maps such that $\pi_{1}\left(Z\left(X\right)\right)=X$. If there
exist constants $C_{i}$ for $i=\left\{ 1,\ldots,4\right\} $ such
that \begin{align*}
d_{p}\left(Z\left(X\right),Z\left(Y\right)\right)\leq & C_{1}d_{p}\left(X,Y\right)\\
d_{\frac{p}{2}}\left(\varphi\left(X\right),\varphi\left(Y\right)\right)\leq & C_{2}d_{p}\left(X,Y\right)\end{align*}
and \begin{align*}
d_{q}\left(Z\left(X\right),Z\left(Y\right)\right)\leq & C_{3}d_{q}\left(X,Y\right)\\
d_{\frac{q}{2}}\left(\varphi\left(X\right),\varphi\left(Y\right)\right)\leq & C_{4}d_{q}\left(X,Y\right)\end{align*}
then there exists a global solution $U:\left[0,\infty\right]\rightarrow\Omega_{q}\left(V\right)$
to the flow equation with initial data $X_{0}$ for the vector field
induced by $Z$ and $\varphi$ for $q>p$.\end{thm}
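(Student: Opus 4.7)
The strategy is to iterate the local existence theorem, Theorem \ref{thm:LocalExist}, after verifying that the conditions on $Z$ and $\varphi$ make the induced $F$ globally Lipschitz in the sense of Definition \ref{def:Fcond}, and then using a Gronwall-type bound to ensure the sequence of local existence times does not accumulate in finite time.

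First I would unpack $\tilde{d}_{p}(F(X),F(Y))$ using the definition. The first component $d_p(\pi_1(Z(X)),\pi_1(Z(Y)))$ equals $d_p(X,Y)$ by the standing assumption $\pi_1(Z(W))=W$. The second component $\sup_{\mathcal{D}}\{\sum_l|\pi_2(Z(X))^1_{t_l,t_{l+1}}-\pi_2(Z(Y))^1_{t_l,t_{l+1}}|^p\}^{1/p}$ is bounded by $d_p(Z(X),Z(Y))\le C_1 d_p(X,Y)$. The third component is controlled via the triangle inequality by $d_p(Z(X),Z(Y))+d_{p/2}(\varphi(X),\varphi(Y))\le(C_1+C_2)d_p(X,Y)$. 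The analogous calculation with $q$ in place of $p$ yields the $\tilde d_q$ bound. Thus $F$ satisfies Definition \ref{def:Fcond} globally.

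Next I would build the global solution by induction. Apply Theorem \ref{thm:LocalExist} at $X_0$ to get a solution $U$ on $[0,\alpha_0]$ in $\Omega_q(V)$. Set $X_1:=U(\alpha_0)$ and apply Theorem \ref{thm:LocalExist} again to extend past $\alpha_0$. Iterating produces a solution on $[0,T_n]$ with $T_n=\sum_{k=0}^{n-1}\alpha_k$, and Lemma \ref{lem:Uniqueness} forces the pieces to patch coherently. To show $T_n\to\infty$, recall from the remark preceding the theorem that when both $Z$ and $\varphi$ are Lipschitz one has $\tilde d_p(F(X),0)\le C_1 r+C_2'(1+d_p(X_k,0))$ on $B_r(X_k)$, and similarly a bound on $d_p(Z(X),0)$, so that the radius produced by Lemma \ref{lem:EpSolExistence} satisfies
\[
\alpha_k \ge \frac{1}{A+B\,d_q(X_k,0)}
\]
for constants $A,B$ depending only on $C_1,\ldots,C_4$ and on $X_0$. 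Hence it suffices to bound $d_q(U(\tau),0)$ on any finite interval.

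For this I would adapt the Dini derivative argument from Lemma \ref{lem:Uniqueness}. Setting $g(\tau):=d_q(U(\tau),0)$ and using the triangle inequality
\[
g(\tau+h)\le d_q(U(\tau+h),V_{F(U(\tau))}(h))+d_q(V_{F(U(\tau))}(h),U(\tau))+g(\tau),
\]
the first term is $o(h)$ since $U$ is a solution, while the homogeneity of $d_q$ under the rough-path scaling applied to the definition of $V_{F(U(\tau))}(h)$ gives $d_q(V_{F(U(\tau))}(h),U(\tau))\le h\,\tilde d_q(F(U(\tau)),0)+O(h^2)\le h(A+Bg(\tau))+O(h^2)$. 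Thus $D^+g(\tau)\le A+Bg(\tau)$, and Gronwall's inequality produces $g(\tau)\le(g(0)+A/B)e^{B\tau}$. Consequently on any finite interval $[0,T]$ the quantities $d_q(X_k,0)$ are uniformly bounded, the $\alpha_k$ are bounded below by a positive constant, and $T_n$ surpasses $T$ after finitely many steps. The resulting curve in $C([0,\infty),\Omega_q(V))$ is the required global solution.

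The main obstacle is the last step: the rough-path analogue of the Gronwall estimate requires justifying the Dini derivative bound in the $d_q$ metric, where $U$ is only a continuous curve of rough paths and the variational curve $V_{F(U(\tau))}$ is nonlinear in the parameter $h$ at the second tensor level. This must be handled with the same care as in the proof of Lemma \ref{lem:Uniqueness}, exploiting the homogeneity of the rough path scalar multiplication and the full strength of the $q$-variation Lipschitz hypotheses on both $Z$ and $\varphi$.
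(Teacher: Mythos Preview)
Your verification that $F$ is globally Lipschitz in the sense of Definition~\ref{def:Fcond} is correct and matches the paper's opening step. From that point on, however, your strategy diverges from the paper's and contains a genuine gap.

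The paper does \emph{not} iterate Theorem~\ref{thm:LocalExist}. It iterates Lemma~\ref{lem:EpSolExistence} instead, building global $\varepsilon$-solutions first and passing to the $\Omega_q$-limit only at the very end via a diagonal subsequence. The point is that $\varepsilon$-solutions live in $WG\Omega_p$ with explicit $d_p$-bounds, whereas actual solutions are only curves in $\Omega_q$. The paper then exploits the freedom to choose the radii $r_n$ at each restart: taking $r_n=e^{n}$ keeps $\bigl(\sum_{i<n}r_i\bigr)/r_n$ bounded, so $\alpha_n=r_n/(2M_n)$ is bounded below by a constant depending only on the Lipschitz constants and on $X_0$. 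No Gronwall estimate is used.

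The gap in your argument occurs exactly where you pass from $d_p(X_k,0)$ to $d_q(X_k,0)$. The local existence time from Lemma~\ref{lem:EpSolExistence} is $\alpha=r/(2M)$ with $M$ a supremum of $\tilde d_p$-quantities over a $d_p$-ball, so the correct bound is $\alpha_k\ge 1/\bigl(A+B\,d_p(X_k,0)\bigr)$, not $d_q(X_k,0)$. Since $d_q\le d_p$ for $q>p$, your Dini-derivative control of $g(\tau)=d_q(U(\tau),0)$ says nothing about $d_p(X_k,0)$, hence nothing about $\alpha_k$. Nor can you simply run the Gronwall in $d_p$: the solution concept of Definition~\ref{def:LightSoln} is stated with $d_q$, so $d_p\bigl(U(\tau+h),V_{F(U(\tau))}(h)\bigr)=o(h)$ is not available. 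A secondary issue is that iterating Theorem~\ref{thm:LocalExist} requires each restart point $X_k$ to lie in $WG\Omega_p(V)$, whereas the theorem only delivers a curve in $\Omega_q(V)$; this would need a separate argument (lower semicontinuity of $p$-variation and closedness of the group constraint under pointwise limits), which you do not supply. The paper sidesteps both problems by keeping the entire iteration at the level of $\varepsilon$-solutions in $WG\Omega_p$.
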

\begin{proof}
If $F$ denotes the vector field induced by $Z$ and $\varphi$, the
above Lipschitz conditions imply\begin{align*}
\tilde{d}_{p}\left(F\left(X\right),F\left(Y\right)\right) & \leq\max\left[1,C_{1}+C_{2}\right]d_{p}\left(X,Y\right)\end{align*}
and\begin{align*}
\tilde{d}_{q}\left(F\left(X\right),F\left(Y\right)\right) & \leq\max\left[1,C_{3}+C_{4}\right]d_{q}\left(X,Y\right).\end{align*}
Letting\begin{align*}
C_{5}= & \max\left[1,C_{1}+C_{2}\right]\\
C_{6}= & \max\left[\tilde{d}_{p}\left(F\left(X_{0}\right),0\right),d_{p}\left(Z\left(X_{0}\right),0\right)\right],\end{align*}
we establish the bounds \begin{align*}
\tilde{d}_{p}\left(F\left(X\right),0\right)\leq & C_{5}d_{p}\left(X,X_{0}\right)+\tilde{d}_{p}\left(F\left(X_{0}\right),0\right)\end{align*}
and \begin{align*}
\sup_{\mathcal{D}}\left\{ \sum_{l}\left|\pi_{2}\left(Z\left(X\right)\right)_{t_{l},t_{l+1}}^{2}\right|^{\frac{p}{2}}\right\} ^{\frac{2}{p}}\leq & d_{p}\left(Z\left(X\right),0\right)\\
\leq & C_{1}d_{p}\left(X,X_{0}\right)+d_{p}\left(Z\left(X_{0}\right),0\right).\end{align*}
For $r_{1}>0$, set\begin{align*}
 & M_{1}=\sup_{X\in B_{r_{1}}\left(X_{0}\right)}\left[\tilde{d}_{p}\left(F\left(X\right),0\right),\sup_{\mathcal{D}}\left\{ \sum_{l}\left|\pi_{2}\left(Z\left(X\right)\right)_{t_{l},t_{l+1}}^{2}\right|^{\frac{p}{2}}\right\} ^{\frac{2}{p}}\right]\\
 & \leq C_{5}r_{1}+C_{6}\end{align*}
and apply lemma \ref{lem:EpSolExistence} so that we have a local
$\varepsilon$-solution $U_{1}^{\varepsilon}\left(\cdot\right)$ on
$\left[0,\alpha_{1}\right]$ for $\alpha_{1}=\frac{r_{1}}{2M_{1}}$.
We now apply the lemma again for initial point $U_{1}^{\varepsilon}\left(\alpha_{1}\right)$,
$r_{2}>0$. To bound $M_{2}$, we derive the estimates \begin{align*}
\tilde{d}_{p}\left(F\left(X\right),0\right)\leq & C_{5}d_{p}\left(X,U_{1}^{\varepsilon}\left(\alpha_{1}\right)\right)+\tilde{d}_{p}\left(F\left(U_{1}^{\varepsilon}\left(\alpha_{1}\right)\right),0\right)\\
\leq & C_{5}\left[d_{p}\left(X,U_{1}^{\varepsilon}\left(\alpha_{1}\right)\right)+d_{p}\left(U_{1}^{\varepsilon}\left(\alpha_{1}\right),X_{0}\right)\right]+\tilde{d}_{p}\left(F\left(X_{0}\right),0\right)\end{align*}
and\begin{align*}
\sup_{\mathcal{D}}\left\{ \sum_{l}\left|\pi_{2}\left(Z\left(X\right)\right)_{t_{l},t_{l+1}}^{2}\right|^{\frac{p}{2}}\right\} ^{\frac{2}{p}}\leq & d_{p}\left(Z\left(X\right),0\right)\\
\leq & C_{1}\left[d_{p}\left(X,U_{1}^{\varepsilon}\left(\alpha_{1}\right)\right)+d_{p}\left(U_{1}^{\varepsilon}\left(\alpha_{1}\right),X_{0}\right)\right]\\
 & +d_{p}\left(Z\left(X_{0}\right),0\right).\end{align*}
Therefore, as $d_{p}\left(U_{1}^{\varepsilon}\left(\alpha_{1}\right),X_{0}\right)\leq r_{1}$,
\begin{align*}
 & M_{2}=\sup_{X\in B_{r_{2}}\left(U_{1}\left(\alpha_{1}\right)\right)}\left[\tilde{d}_{p}\left(F\left(X\right),0\right),\sup_{\mathcal{D}}\left\{ \sum_{l}\left|\pi_{2}\left(Z\left(X\right)\right)_{t_{l},t_{l+1}}^{2}\right|^{\frac{p}{2}}\right\} ^{\frac{2}{p}}\right]\\
 & \leq C_{5}\left(r_{1}+r_{2}\right)+C_{6}\end{align*}
and we get an $\varepsilon$-solution, $U_{2}^{\varepsilon}$ defined
on $\left[0,\alpha_{2}\right]$ where $U_{2}^{\varepsilon}\left(0\right)=U_{1}^{\varepsilon}\left(\alpha_{1}\right)$
and \begin{align*}
\alpha_{2}= & \frac{r_{2}}{2M_{2}}\\
\geq & \frac{1}{2C_{5}\left(\frac{r_{1}+r_{2}}{r_{2}}\right)+\frac{C_{6}}{r_{3}}}.\end{align*}
If we repeat this process $n$ times, then for each $n$ we obtain
an $\varepsilon$-solution, $U_{n}^{\varepsilon}$ defined on interval
of length \begin{align*}
\alpha_{n}\geq & \frac{1}{2C_{5}\left(\frac{\sum_{i=1}^{n-1}r_{i}}{r_{n}}\right)+\frac{C_{6}}{r_{n}}}.\end{align*}
We need a lower bound for $\alpha_{n}$ independent of $n$. If we
choose $r_{i}=e^{i}$ then \begin{align*}
\frac{\sum_{i=1}^{n-1}r_{i}}{r_{n}}= & e^{1-n}+e^{2-n}+\cdots+e^{-1}\\
= & \sum_{j=1}^{n-1}e^{-j}\end{align*}
and by the ratio test, the series $\sum_{j=1}^{\infty}e^{-j}$ converges
to some value $C$. Then with the above choice of $r_{i}$, \begin{align*}
\alpha_{n}\geq & \frac{1}{2C_{5}\left(\frac{\sum_{i=1}^{n-1}r_{i}}{r_{n}}\right)+\frac{C_{6}}{r_{n}}}\\
\geq & \frac{1}{2C_{5}C+\frac{C_{6}}{e}}.\end{align*}
This means that we can repeat the procedure indefinitely to construct
$\varepsilon$-solutions $U_{n}^{\varepsilon}$ each defined on the
interval $\left[0,\alpha_{n}\right]$, with $U_{n}^{\varepsilon}\left(0\right)=U_{n-1}^{\varepsilon}\left(\alpha_{n-1}\right)$
and the total length of the intervals is infinite. 

From here, we follow the same arguments as in the local solution case.
Take $\left(\varepsilon_{m}\right)_{m\geq0}$ such that $\varepsilon_{m}\rightarrow0$
as $m\rightarrow\infty$. By lemma \ref{lem:CgtEpsSoln}, each $U_{n}^{\varepsilon_{m}}$
has a subsequence which converges in $C\left(\left[0,\alpha_{n}\right],WG\Omega_{q}\left(V\right)\right)$
equipped with the uniform norm to $U_{n}$. For $U_{1}^{\varepsilon}$,
take the convergent subsequence $\left\{ U_{1}^{\varepsilon_{l_{1}}}\right\} $
and for the next step, take a convergent subsequence $\left\{ U_{2}^{\varepsilon_{l_{2}}}\right\} $
of $\left\{ U_{2}^{\varepsilon_{l_{1}}}\right\} $, etc. The indicies
of the subsequences are given by $\left\{ \varepsilon_{l_{1}}\right\} \supseteq\left\{ \varepsilon_{l_{2}}\right\} \supseteq\cdots$
so if we take the diagonal subsequence whose indicies $\varepsilon_{l}$
are given by $\varepsilon_{l}=\varepsilon_{l_{l}}$ we have simultaneous
convergence. Since we have uniform, and therefore pointwise convergence,
$U_{n}^{\varepsilon_{_{l}}}\left(\tau\right)\rightarrow U_{n}\left(\tau\right)$
for each $\tau$ and in particular\begin{align*}
U_{n}^{\varepsilon_{l}}\left(0\right)= & \lim_{l\rightarrow\infty}U_{n}^{\varepsilon_{l}}\left(0\right)\\
= & \lim_{l\rightarrow\infty}U_{n-1}^{\varepsilon_{l}}\left(\alpha_{n-1}\right)\\
= & U_{n-1}\left(\alpha_{n-1}\right).\end{align*}
We can put these solutions together to form $\hat{U}:\left[0,\infty\right)\rightarrow\Omega_{q}\left(V\right)$
defined by \[
\hat{U}\left(\tau\right)=\begin{cases}
U_{1}\left(\tau\right) & \mbox{ for }\tau\in\left[0,\alpha_{1}\right]\\
U_{2}\left(\tau-\alpha_{1}\right) & \mbox{ for }\tau\in\left(\alpha_{1},\alpha_{1}+\alpha_{2}\right]\\
\vdots & \vdots\end{cases}\]
which is then a unique global solution by lemmas \ref{lem:LimIsSoln}
and \ref{lem:Uniqueness}. 
\end{proof}

\section*{Appendix}

As we make heavy use of the Lyons-Victoir extension, we present here
a small extension of the result for paths in $\mathbb{R}^{d}$. We
remark that it may be used to define a mapping from a curve of rough
paths $X\left(\varepsilon\right)$ to another rough path which can
be viewed as $\int X\left(\varepsilon\right)d\varepsilon$. The construction
follows closely the proof of the $\mathbb{R}^{d}$ case extension
in the Lyons-Victoir paper \cite{LyonsVicExten}.

Let $X\left(\varepsilon\right)$ be a path taking values in $WG\Omega_{p}\left(\mathbb{R}^{d}\right)$.
If $X\left(\varepsilon\right)$ takes its values only in smooth rough
paths then we could define the integral $X^{\mu}$ via the Bochner
integral:\begin{align*}
X_{s,t}^{\mu,1}= & \int_{0}^{1}\left[\int_{s<u<t}dx_{u}\left(\varepsilon\right)\right]\mu\left(d\varepsilon\right)\\
X_{s,t}^{\mu,2}= & \int_{0}^{1}\int_{0}^{1}\left[\int_{s<u_{1}<u_{2}<t}dx_{u_{1}}\left(\varepsilon\right)\otimes dx_{u_{2}}\left(\delta\right)\right]\mu\left(d\varepsilon\right)\mu\left(d\delta\right).\end{align*}
The term in brackets in the definition of $X_{s,t}^{\mu,2}$ can be
realized for non smooth rough paths as $\pi_{1,2}\left(Z\right)$
where $Z$ is a rough path in $WG\Omega_{p}\left(\mathbb{R}^{d}\oplus\mathbb{R}^{d}\right)$
which extends $\left(X\left(\varepsilon\right),X\left(\delta\right)\right)$.
Therefore, in order to define the integral, we provide some conditions
which allow extension of $\left(X\left(\varepsilon\right),X\left(\delta\right)\right)$
to a $Z$ which we can integrate in $T^{2}\left(\mathbb{R}^{d}\right)$. 

Suppose now that we have a family of weakly geometric rough paths
$X\left(\varepsilon\right)$ with $\varepsilon\in\left[0,1\right]$
with associated paths $x\left(\varepsilon\right)$ taking values in
$\mathbb{R}^{d}$. We make the following assumptions:

\label{con:Integration}Let $X\left(\varepsilon\right)$ for $\varepsilon\in\left[0,1\right]$
be a path in $WG\Omega_{p}\left(\mathbb{R}^{d}\right)$ such that
there exists a non-negative, $0$ on the diagonal, super additive
function $\omega:\Delta_{T}\rightarrow\mathbb{R}$ satisfying\begin{align*}
\left|X_{s,t}^{i}\left(\varepsilon\right)\right|\leq & C\omega\left(s,t\right)^{\frac{i}{p}}\end{align*}
and\begin{align*}
\left|X_{s,t}^{i}\left(\varepsilon\right)-X_{s,t}^{i}\left(\tilde{\varepsilon}\right)\right|\leq & C\left|\varepsilon-\tilde{\varepsilon}\right|\omega\left(s,t\right)^{\frac{i}{p}}.\end{align*}

Given these conditions, we reformulate in terms of $\frac{1}{p}$
H\"{o}lder paths. If none of the paths are constant over any interval,
then we can define the bijection $\tau\left(t\right)=\omega\left(0,t\right)\frac{T}{\omega\left(0,T\right)}$
and reparameterize so that\begin{align}
\left|X_{\tau^{-1}\left(s\right),\tau^{-1}\left(t\right)}^{i}\right|\leq & C\omega\left(\tau^{-1}\left(s\right),\tau^{-1}\left(t\right)\right)^{\frac{i}{p}}\nonumber \\
\leq & C\left[\omega\left(\tau^{-1}\left(0\right),\tau^{-1}\left(t\right)\right)-\omega\left(\tau^{-1}\left(0\right),\tau^{-1}\left(s\right)\right)\right]^{\frac{i}{p}}\nonumber \\
= & C\frac{\omega\left(0,T\right)^{\frac{i}{p}}}{T}\left(t-s\right)^{\frac{i}{p}}.\label{eq:HolderT}\end{align}
Similarly\begin{align}
\left|X_{\tau^{-1}\left(s\right),\tau^{-1}\left(t\right)}^{i}\left(\varepsilon\right)-X_{\tau^{-1}\left(s\right),\tau^{-1}\left(t\right)}^{i}\left(\tilde{\varepsilon}\right)\right|\nonumber \\
\leq C\left|\varepsilon-\tilde{\varepsilon}\right|\frac{\omega\left(0,T\right)^{\frac{i}{p}}}{T}\left(t-s\right)^{\frac{i}{p}}.\label{eq:LipEpsilon}\end{align}
 As a result, we will assume for the rest of the section that we have
reparameterized so that the H\"{o}lder estimates in (\ref{eq:HolderT})
and (\ref{eq:LipEpsilon}) hold, i.e. when we write $X_{s,t}\left(\varepsilon\right)$
it is understood as the reparameterized $X_{\tau^{-1}\left(s\right),\tau^{-1}\left(\tau\right)}$.

We construct $Z\left(\varepsilon,\delta\right)$ in the following
manner. For each $\varepsilon$ and $\delta$, we define a choice
of area element over dyadic intervals associated to the path $\left(x\left(\varepsilon\right),x\left(\delta\right)\right)$
in $\mathbb{R}^{2d}$. We then show that over these dyadic points
the area element satisfies an estimate of the same type as (\ref{eq:LipEpsilon}).
Finally we show that the area element can be extended to all intervals
such that the estimate still holds. The constructed $Z\left(\varepsilon,\delta\right)$
will then be continuous as a map from $\left[0,1\right]^{2}\rightarrow WG\Omega_{p}\left(\mathbb{R}^{2d}\right)$
so we can define the integral of the constituent projection $\pi_{1,2}\left(Z\left(\varepsilon,\delta\right)\right)$.

From the definition of the group imbedding of weakly geometric rough
paths into $T^{2}\left(\mathbb{R}^{2d}\right)$ we get in the original
formulation that $Z\left(\varepsilon,\delta\right)^{2}$ should be
defined by \begin{align*}
Z\left(\varepsilon,\delta\right)^{2}= & \left(\begin{array}{cc}
\frac{1}{2}x^{i}\left(\varepsilon\right)x^{j}\left(\varepsilon\right)+A^{ij}\left(\varepsilon\right) & \frac{1}{2}x^{i}\left(\varepsilon\right)x^{j}\left(\delta\right)+A^{ij}\left(\varepsilon,\delta\right)\\
\frac{1}{2}x^{i}\left(\delta\right)x^{j}\left(\varepsilon\right)+A^{ij}\left(\varepsilon,\delta\right) & \frac{1}{2}x^{i}\left(\delta\right)x^{j}\left(\delta\right)+A^{ij}\left(\delta\right)\end{array}\right)\end{align*}
where in the $1,1$ entry of the block matrix, $i,j\in\left\{ 1,\ldots,d\right\} $,
in the $1,2$ entry $i\in\left\{ 1,\ldots,d\right\} $ $j\in\left\{ d+1,\ldots,2d\right\} ,$
etc. Where $A_{s,t}^{ij}$ is interpreted as the area in the $\left(i,j\right)$
plane bounded by the curve and its chord between $s$ and $t$. The
case of $A^{ij}$ corresponding to the $1,2$ entry in the block matrix
for $Z$ is the only one treated as precisely the same arguments are
used in the other terms. In order to be multiplicative, the area elements
must satisfy \begin{align}
A_{s,u}^{ij}= & A_{s,t}^{ij}+A_{tu}^{ij}+\frac{1}{2}\left(x_{s,t}^{i}\left(\varepsilon\right)x_{t,u}^{j}\left(\delta\right)-x_{s,t}^{j}\left(\delta\right)x_{t,u}^{i}\left(\varepsilon\right)\right).\label{eq:arealem}\end{align}

\begin{lem}
If $X\left(\varepsilon\right)$ satisfies condition (\ref{con:Integration})
and is also assumed to be reparameterized then there exists a map
$A$ taking the dyadic points of the simplex $\Delta_{T}$ to the
set of antisymmetric $2$ tensors over $\mathbb{R}^{2d}$ satisfying
(\ref{eq:arealem}) such that \begin{align*}
\left|A_{\frac{k}{2^{n}},\frac{k+1}{2^{n}}}^{ij}\left(\varepsilon,\delta\right)-A_{\frac{k}{2^{n}},\frac{k+1}{2^{n}}}^{ij}\left(\tilde{\varepsilon},\tilde{\delta}\right)\right|\leq & \frac{1}{2}C^{2}\frac{\omega\left(0,T\right)^{\frac{2}{p}}}{T^{2}}\left[\sum_{l=0}^{n-1}2^{\frac{l\left(2-p\right)}{p}}\right]\\
 & \times\left(\left|\varepsilon-\tilde{\varepsilon}\right|+\left|\delta-\tilde{\delta}\right|\right)2^{-\frac{2n}{p}}.\end{align*}
\end{lem}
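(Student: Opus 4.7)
The plan is to construct $A$ inductively along the dyadic grid, using the multiplicativity relation \eqref{eq:arealem} as a definition rather than a constraint. At dyadic level $0$, take $A_{0,T}^{ij}(\varepsilon,\delta)$ to be any antisymmetric tensor independent of $(\varepsilon,\delta)$; the natural choice is $0$. So at this level the difference appearing in the estimate vanishes and the base case of the induction is trivial.

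For the inductive step, suppose $A^{ij}_{k/2^{n},(k+1)/2^{n}}$ has been defined for all $k$. Write the interval $[k/2^{n},(k+1)/2^{n}]$ as the union of $L=[k/2^{n},(2k+1)/2^{n+1}]$ and $R=[(2k+1)/2^{n+1},(k+1)/2^{n}]$. Relation \eqref{eq:arealem} becomes a single linear equation in the two unknowns $A^{ij}_{L}$ and $A^{ij}_{R}$; I would resolve the one-dimensional freedom by imposing $A^{ij}_{L}=A^{ij}_{R}$, which gives the explicit formula
\[
A^{ij}_{L}(\varepsilon,\delta)=A^{ij}_{R}(\varepsilon,\delta)=\tfrac{1}{2}A^{ij}_{k/2^{n},(k+1)/2^{n}}(\varepsilon,\delta)-\tfrac{1}{4}\bigl(x^{i}_{L}(\varepsilon)x^{j}_{R}(\delta)-x^{j}_{L}(\delta)x^{i}_{R}(\varepsilon)\bigr).
\]
By construction, $A$ is antisymmetric in $(i,j)$ and the multiplicative identity \eqref{eq:arealem} holds on every dyadic pair obtained from a single split; additivity over longer dyadic chains follows by telescoping.

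To get the quantitative estimate, let
\[
\Phi_{n}:=\sup_{k}\bigl|A^{ij}_{k/2^{n},(k+1)/2^{n}}(\varepsilon,\delta)-A^{ij}_{k/2^{n},(k+1)/2^{n}}(\tilde\varepsilon,\tilde\delta)\bigr|.
\]
Applying the recursive formula to the difference in $(\varepsilon,\delta)$ versus $(\tilde\varepsilon,\tilde\delta)$ and using the identity $ab-\tilde a\tilde b=(a-\tilde a)b+\tilde a(b-\tilde b)$ on each of the four bilinear cross terms, one bounds the cross-term contribution by combining the boundedness estimate $|X^{1}_{s,t}|\le C(\omega(0,T)/T)^{1/p}(t-s)^{1/p}$ from \eqref{eq:HolderT} with the Lipschitz-in-parameter estimate \eqref{eq:LipEpsilon}. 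On a half-interval of length $T/2^{n+1}$ this yields a bound of the form
\[
\tfrac{1}{2}C^{2}\tfrac{\omega(0,T)^{2/p}}{T^{2/p}}\bigl(|\varepsilon-\tilde\varepsilon|+|\delta-\tilde\delta|\bigr)\bigl(T/2^{n+1}\bigr)^{2/p},
\]
so that the one-step recursion reads
\[
\Phi_{n+1}\le\tfrac{1}{2}\Phi_{n}+\tfrac{1}{2}C^{2}\tfrac{\omega(0,T)^{2/p}}{T^{2/p}}\bigl(|\varepsilon-\tilde\varepsilon|+|\delta-\tilde\delta|\bigr)(T/2^{n+1})^{2/p}.
\]

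Iterating this recursion from $\Phi_{0}=0$ and re-indexing the resulting geometric sum produces exactly the factor $\sum_{l=0}^{n-1}2^{l(2-p)/p}$ multiplied by the global factor $2^{-2n/p}$, which is the content of the claim (modulo the precise writing of the $\omega(0,T)$ and $T$ constants, which track from \eqref{eq:HolderT}). The main obstacle is organizing the bookkeeping in the cross-term step so that (a) one genuinely gains a factor of $(T/2^{n+1})^{2/p}$ at each level (this needs \emph{both} the sup bound and the Lipschitz bound, applied to different factors in the bilinear expansion), and (b) the telescoping of the geometric recursion produces the exact exponent $(2-p)/p$ appearing in the statement, which is the exponent that reflects the fact that $p>2$ makes the sum grow with $n$ rather than converge.
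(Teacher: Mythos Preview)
Your proposal is correct and follows essentially the same route as the paper: the paper also initializes $A^{ij}_{0,1}$ to a constant independent of $(\varepsilon,\delta)$, bisects with the equal-halves convention $A^{ij}_{L}=A^{ij}_{R}$, and obtains exactly your recursive formula. The only cosmetic difference is that the paper unwinds the recursion into a closed-form sum $A^{ij}_{k/2^{n},(k+1)/2^{n}}=2^{-n}C-\tfrac14\sum_{l=0}^{n-1}2^{-l}(\cdots)$ before taking differences, whereas you phrase it as iterating the inequality $\Phi_{n+1}\le\tfrac12\Phi_{n}+(\text{cross term})$; both yield the identical geometric sum $2^{-2n/p}\sum_{l=0}^{n-1}2^{l(2-p)/p}$.
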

\begin{proof}
For each $\left(\varepsilon,\delta\right)$ we can define $A^{ij}\left(\varepsilon,\delta\right)$
as follows. Set $A_{0,1}^{ij}\left(\varepsilon,\delta\right)=C$ then
suppose we have defined $A_{\frac{k}{2^{n}},\frac{k+1}{2^{n}}}^{ij}\left(\varepsilon,\delta\right)$.
We then define $A^{ij}$ on the next finest partition by setting the
areas over each of the two halves of the previous partition to be
equal. In other words, we set $A^{ij}$ from an old partition point
to the point added by the finer partition equal to $A^{ij}$ over
the added point to the next old point, i.e. $A_{\frac{2k}{2^{n+1}},\frac{2k+1}{2^{n+1}}}^{ij}\left(\varepsilon,\delta\right)=A_{\frac{2k+1}{2^{n+1}},\frac{2k+2}{2^{n+1}}}^{ij}\left(\varepsilon,\delta\right)$.
Next, as we want the final product to satisfy equation (\ref{eq:arealem})
we set \begin{align*}
A_{\frac{k}{2^{n}},\frac{k+1}{2^{n}}}^{ij}\left(\varepsilon,\delta\right)= & 2A_{\frac{2k}{2^{n+1}},\frac{2k+1}{2^{n+1}}}^{ij}\left(\varepsilon,\delta\right)\\
 & +\frac{1}{2}\left(x_{\frac{2k}{2^{n+1}},\frac{2k+1}{2^{n+1}}}^{i}\left(\varepsilon\right)x_{\frac{2k+1}{2^{n+1}},\frac{k+1}{2^{n}}}^{j}\left(\delta\right)-x_{\frac{2k}{2^{n+1}},\frac{2k+1}{2^{n+1}}}^{j}\left(\delta\right)x_{\frac{2k+1}{2^{n+1}},\frac{k+1}{2^{n}}}^{i}\left(\varepsilon\right)\right)\end{align*}
 so that\begin{align*}
A_{\frac{2k}{2^{n+1}},\frac{2k+1}{2^{n+1}}}^{ij}\left(\varepsilon,\delta\right)= & A_{\frac{2k+1}{2^{n+1}},\frac{2k+2}{2^{n+1}}}^{ij}\left(\varepsilon,\delta\right)\\
= & \frac{1}{2}A_{\frac{k}{2^{n}},\frac{k+1}{2^{n}}}^{ij}\left(\varepsilon,\delta\right)\\
 & -\frac{1}{4}\left(x_{\frac{2k}{2^{n+1}},\frac{2k+1}{2^{n+1}}}^{i}\left(\varepsilon\right)x_{\frac{2k+1}{2^{n+1}},\frac{k+1}{2^{n}}}^{j}\left(\delta\right)\right.\\
 & -\left.x_{\frac{2k}{2^{n+1}},\frac{2k+1}{2^{n+1}}}^{j}\left(\delta\right)x_{\frac{2k+1}{2^{n+1}},\frac{k+1}{2^{n}}}^{i}\left(\varepsilon\right)\right).\end{align*}
 Then, through the use of induction, we explicitly see what the area
is over each of the dyadic points for any level. Indeed\begin{align*}
A_{0,\frac{1}{2}}^{i,j}\left(\varepsilon,\delta\right)= & \frac{1}{2}C-\frac{1}{4}\left(x_{0,\frac{1}{2}}^{i}\left(\varepsilon\right)x_{\frac{1}{2},1}^{j}\left(\delta\right)-x_{0,\frac{1}{2}}^{j}\left(\delta\right)x_{\frac{1}{2},1}^{i}\left(\varepsilon\right)\right)\end{align*}
and \begin{align*}
A_{0,\frac{1}{4}}^{i,j}\left(\varepsilon,\delta\right)= & \frac{1}{4}C-\frac{1}{4}\left[\frac{1}{2}\left(x_{0,\frac{1}{2}}^{i}\left(\varepsilon\right)x_{\frac{1}{2},1}^{j}\left(\delta\right)-x_{0,\frac{1}{2}}^{j}\left(\delta\right)x_{\frac{1}{2},1}^{i}\left(\varepsilon\right)\right)\right.\\
 & \left.+\left(x_{0,\frac{1}{4}}^{i}\left(\varepsilon\right)x_{\frac{1}{4},\frac{1}{2}}^{j}\left(\delta\right)-x_{0,\frac{1}{4}}^{j}\left(\delta\right)x_{\frac{1}{4},\frac{1}{2}}^{i}\left(\varepsilon\right)\right)\right]\end{align*}
so that\begin{align*}
A_{\frac{k}{2^{n}},\frac{k+1}{2^{n}}}^{i,j}\left(\varepsilon,\delta\right)= & \frac{1}{2^{n}}C\\
 & -\frac{1}{4}\sum_{l=0}^{n-1}2^{-l}\left(x_{\frac{2k}{2^{n-l}},\frac{2k+1}{2^{n-l}}}^{i}\left(\varepsilon\right)x_{\frac{2k+1}{2^{n-l}},\frac{2k+2}{2^{n-l}}}^{j}\left(\delta\right)\right.\\
 & -\left.x_{\frac{2k}{2^{n-l}},\frac{2k+1}{2^{n-l}}}^{j}\left(\delta\right)x_{\frac{2k+1}{2^{n-l}},\frac{2k+2}{2^{n-l}}}^{i}\left(\varepsilon\right)\right),\end{align*}
for all $n\geq1$. Therefore, \begin{align*}
 & A_{\frac{k}{2^{n}},\frac{k+1}{2^{n}}}^{ij}\left(\varepsilon,\delta\right)-A_{\frac{k}{2^{n}},\frac{k+1}{2^{n}}}^{ij}\left(\tilde{\varepsilon},\tilde{\delta}\right)\\
 & =-\frac{1}{4}\sum_{l=0}^{n-1}2^{-l}\left(\left(x_{\frac{2k}{2^{n-l}},\frac{2k+1}{2^{n-l}}}^{i}\left(\varepsilon\right)x_{\frac{2k+1}{2^{n-l}},\frac{2k+2}{2^{n-l}}}^{j}\left(\delta\right)-x_{\frac{2k}{2^{n-l}},\frac{2k+1}{2^{n-l}}}^{j}\left(\delta\right)x_{\frac{2k+1}{2^{n-l}},\frac{2k+2}{2^{n-l}}}^{i}\left(\varepsilon\right)\right)\right.\\
 & \left.-\left(x_{\frac{2k}{2^{n-l}},\frac{2k+1}{2^{n-l}}}^{i}\left(\tilde{\varepsilon}\right)x_{\frac{2k+1}{2^{n-l}},\frac{2k+2}{2^{n-l}}}^{j}\left(\tilde{\delta}\right)-x_{\frac{2k}{2^{n-l}},\frac{2k+1}{2^{n-l}}}^{j}\left(\tilde{\delta}\right)x_{\frac{2k+1}{2^{n-l}},\frac{2k+2}{2^{n-l}}}^{i}\left(\tilde{\varepsilon}\right)\right)\right)\end{align*}
and\begin{align*}
 & \left|A_{\frac{k}{2^{n}},\frac{k+1}{2^{n}}}^{ij}\left(\varepsilon,\delta\right)-A_{\frac{k}{2^{n}},\frac{k+1}{2^{n}}}^{ij}\left(\tilde{\varepsilon},\tilde{\delta}\right)\right|\\
 & \leq\frac{1}{4}\left|\sum_{l=0}^{n-1}2^{-l}\left[\left(x_{\frac{2k}{2^{n-l}},\frac{2k+1}{2^{n-l}}}^{i}\left(\varepsilon\right)x_{\frac{2k+1}{2^{n-l}},\frac{2k+2}{2^{n-l}}}^{j}\left(\delta\right)-x_{\frac{2k}{2^{n-l}},\frac{2k+1}{2^{n-l}}}^{j}\left(\delta\right)x_{\frac{2k+1}{2^{n-l}},\frac{2k+2}{2^{n-l}}}^{i}\left(\varepsilon\right)\right)\right.\right.\\
 & \left.\left.-\left(x_{\frac{2k}{2^{n-l}},\frac{2k+1}{2^{n-l}}}^{i}\left(\tilde{\varepsilon}\right)x_{\frac{2k+1}{2^{n-l}},\frac{2k+2}{2^{n-l}}}^{j}\left(\tilde{\delta}\right)-x_{\frac{2k}{2^{n-l}},\frac{2k+1}{2^{n-l}}}^{j}\left(\tilde{\delta}\right)x_{\frac{2k+1}{2^{n-l}},\frac{2k+2}{2^{n-l}}}^{i}\left(\tilde{\varepsilon}\right)\right)\right]\right|.\end{align*}
From here we can add and subtract\[
x_{\frac{2k}{2^{n-l}},\frac{2k+1}{2^{n-l}}}^{i}\left(\varepsilon\right)x_{\frac{2k+1}{2^{n-l}},\frac{2k+2}{2^{n-l}}}^{j}\left(\tilde{\delta}\right)\]
and\[
x_{\frac{2k}{2^{n-l}},\frac{2k+1}{2^{n-l}}}^{j}\left(\delta\right)x_{\frac{2k+1}{2^{n-l}},\frac{2k+2}{2^{n-l}}}^{i}\left(\tilde{\varepsilon}\right)\]
 to establish that the term in square brackets above is equal to \begin{align*}
 & \left[\left(x_{\frac{2k}{2^{n-l}},\frac{2k+1}{2^{n-l}}}^{i}\left(\varepsilon\right)\left[x_{\frac{2k+1}{2^{n-l}},\frac{2k+2}{2^{n-l}}}^{j}\left(\delta\right)-x_{\frac{2k+1}{2^{n-l}},\frac{2k+2}{2^{n-l}}}^{j}\left(\tilde{\delta}\right)\right]\right.\right.\\
 & -\left.x_{\frac{2k}{2^{n-l}},\frac{2k+1}{2^{n-l}}}^{j}\left(\delta\right)\left[x_{\frac{2k+1}{2^{n-l}},\frac{2k+2}{2^{n-l}}}^{i}\left(\varepsilon\right)-x_{\frac{2k+1}{2^{n-l}},\frac{2k+2}{2^{n-l}}}^{i}\left(\tilde{\varepsilon}\right)\right]\right)\\
 & -\left(\left[x_{\frac{2k}{2^{n-l}},\frac{2k+1}{2^{n-l}}}^{i}\left(\tilde{\varepsilon}\right)-x_{\frac{2k}{2^{n-l}},\frac{2k+1}{2^{n-l}}}^{i}\left(\varepsilon\right)\right]x_{\frac{2k+1}{2^{n-l}},\frac{2k+2}{2^{n-l}}}^{j}\left(\tilde{\delta}\right)\right.\\
 & \left.\left.\left[x_{\frac{2k}{2^{n-l}},\frac{2k+1}{2^{n-l}}}^{j}\left(\delta\right)-x_{\frac{2k}{2^{n-l}},\frac{2k+1}{2^{n-l}}}^{j}\left(\tilde{\delta}\right)\right]x_{\frac{2k+1}{2^{n-l}},\frac{2k+2}{2^{n-l}}}^{i}\left(\tilde{\varepsilon}\right)\right)\right].\end{align*}
Hence by using our conditions, we have\begin{align*}
\left|A_{\frac{k}{2^{n}},\frac{k+1}{2^{n}}}^{ij}\left(\varepsilon,\delta\right)-A_{\frac{k}{2^{n}},\frac{k+1}{2^{n}}}^{ij}\left(\tilde{\varepsilon},\tilde{\delta}\right)\right|\leq & \frac{1}{2}C^{2}\frac{\omega\left(0,T\right)^{\frac{2}{p}}}{T^{2}}\left[\sum_{l=0}^{n-1}2^{\frac{l\left(2-p\right)}{p}}\right]\\
 & \times\left(\left|\varepsilon-\tilde{\varepsilon}\right|+\left|\delta-\tilde{\delta}\right|\right)2^{-\frac{2n}{p}}.\end{align*}
 
\end{proof}
Now, since we currently have only the extension defined for dyadic
points of time, let us show that we can extend the area element to
all points of the simplex $\Delta_{T}$ such that the H\"{o}lder
estimate still holds. 
\begin{lem}
There exists a unique $\hat{A}^{ij}$ defined on all points of $\Delta_{T}$
which on dyadic points coincides with $A^{ij}$ defined above such
that \[
\left|\hat{A}_{s,t}^{ij}\left(\varepsilon,\delta\right)-\hat{A}_{s,t}^{ij}\left(\tilde{\varepsilon},\tilde{\delta}\right)\right|\leq\tilde{C}\left(\left|\varepsilon-\tilde{\varepsilon}\right|+\left|\delta-\tilde{\delta}\right|\right)\left(t-s\right)^{\frac{2}{p}}\]
for all $s,t\in\left[0,1\right]$ for a constant $\tilde{C}$ depending
on $p$.\end{lem}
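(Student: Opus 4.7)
The plan is to leverage the dyadic bound from the previous lemma together with the multiplicativity relation (\ref{eq:arealem}) to extend $A^{ij}$ to all of $\Delta_T$ and derive the global H\"{o}lder estimate in $(\varepsilon,\delta)$. A key preliminary simplification: since $p>2$, one has $(2-p)/p<0$, so the geometric factor $\sum_{l=0}^{n-1} 2^{l(2-p)/p}$ is bounded uniformly in $n$ by $\kappa_p := (1-2^{(2-p)/p})^{-1}$, and the previous lemma already reduces to the uniform dyadic bound
\[
\left|A_{k/2^n,(k+1)/2^n}^{ij}(\varepsilon,\delta) - A_{k/2^n,(k+1)/2^n}^{ij}(\tilde\varepsilon,\tilde\delta)\right| \leq C'\bigl(|\varepsilon-\tilde\varepsilon|+|\delta-\tilde\delta|\bigr)(2^{-n})^{2/p},
\]
with $C'$ depending only on $p$, $T$ and $\omega(0,T)$. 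This is already the desired estimate on dyadic intervals of arbitrary scale.

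For general $s<t$ in $[0,T]$, I would choose the largest dyadic interval $[a,b]\subseteq[s,t]$, of level $n$ with $2^{-n-1}\leq t-s<2^{-n}$, and apply (\ref{eq:arealem}) twice to obtain
\[
\hat{A}_{s,t}^{ij} = \hat{A}_{s,a}^{ij} + A_{a,b}^{ij} + \hat{A}_{b,t}^{ij} + \tfrac{1}{2}\bigl(x^i_{s,a}x^j_{a,t} - x^j_{s,a}x^i_{a,t}\bigr) + \tfrac{1}{2}\bigl(x^i_{a,b}x^j_{b,t} - x^j_{a,b}x^i_{b,t}\bigr),
\]
and then recursively perform the same decomposition on the end pieces $[s,a]$ and $[b,t]$, both of length strictly less than $2^{-n}$. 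This produces a dyadic tree of subintervals whose leaves are dyadic of ever-decreasing scale; at each scale $2^{-k}$ at most two leaves appear. The resulting (generically infinite) telescoping sum converges absolutely by the estimate above, and I would take this sum as the definition of $\hat{A}_{s,t}^{ij}$. By construction $\hat{A}$ agrees with $A$ on dyadic points and automatically inherits multiplicativity (\ref{eq:arealem}).

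The H\"{o}lder estimate then follows by bounding the telescoped series termwise. Each dyadic area contribution at scale $2^{-k}$ is at most $C'(|\varepsilon-\tilde\varepsilon|+|\delta-\tilde\delta|)2^{-2k/p}$, while each product correction is controlled via the add-and-subtract identity
\[
x^i(\varepsilon)x^j(\delta) - x^i(\tilde\varepsilon)x^j(\tilde\delta) = x^i(\varepsilon)\bigl[x^j(\delta)-x^j(\tilde\delta)\bigr] + \bigl[x^i(\varepsilon)-x^i(\tilde\varepsilon)\bigr]x^j(\tilde\delta)
\]
combined with (\ref{eq:HolderT}) and (\ref{eq:LipEpsilon}), producing a term of the same shape $O(2^{-2k/p}(|\varepsilon-\tilde\varepsilon|+|\delta-\tilde\delta|))$. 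Summing over $k\geq n$ gives a geometric series in $2^{-2/p}$ with total comparable to $2^{-2n/p}\sim (t-s)^{2/p}$, yielding the claimed constant $\tilde{C}$. Uniqueness follows because any extension satisfying (\ref{eq:arealem}) and the stated bound is H\"{o}lder in $(s,t)$ and hence determined by its restriction to the dense dyadic set. The main obstacle will be the combinatorial bookkeeping for the recursive tree, namely verifying the $O(1)$-leaves-per-scale count and the absolute convergence of the (possibly infinite) telescoping sum; once those are pinned down, the analytic content reduces directly to the preceding dyadic lemma and the standing H\"{o}lder/Lipschitz estimates on $X(\varepsilon)$.
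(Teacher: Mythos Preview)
Your approach is correct in outline but differs from the paper's. The paper follows the Lyons--Victoir argument directly: instead of defining $\hat A$ on arbitrary $(s,t)$ via an infinite dyadic tree, it first proves by induction on the dyadic level $m$ that
\[
\bigl|A_{s,t}^{ij}(\varepsilon,\delta) - A_{s,t}^{ij}(\tilde\varepsilon,\tilde\delta)\bigr| \leq 4\hat C\bigl(|\varepsilon-\tilde\varepsilon|+|\delta-\tilde\delta|\bigr)\sum_{k=r+1}^{m} 2^{-2k/p}
\]
for all $s,t\in D_m$ with $0<t-s<2^{-r}$. The inductive step takes $s_1=\min\{u\in D_{M-1}:u\geq s\}$, $t_1=\max\{u\in D_{M-1}:u\leq t\}$, expands $A_{s,t}$ over the three pieces $[s,s_1]$, $[s_1,t_1]$, $[t_1,t]$ via (\ref{eq:arealem}), and bounds the four cross terms by exactly your add-and-subtract trick together with (\ref{eq:HolderT}), (\ref{eq:LipEpsilon}). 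Once the estimate holds on all dyadics, $\hat A_{s,t}$ is \emph{defined} as $\lim_{r\to\infty} A^{ij}_{\lfloor 2^r s\rfloor/2^r,\,\lfloor 2^r t\rfloor/2^r}$, and the bound passes to the limit.

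Your chaining decomposition uses the same analytic ingredients and would also succeed, but trades a clean finite induction (three subintervals per step, a finite partial sum that closes) for an infinite series whose convergence and leaves-per-scale count you rightly flag as the remaining work. The paper's route sidesteps that bookkeeping entirely. One small gap in your sketch: your uniqueness argument invokes H\"older continuity of $\hat A$ in $(s,t)$, but the lemma's bound is in $(\varepsilon,\delta)$; you would need the companion estimate $|\hat A_{s,t}^{ij}|\lesssim (t-s)^{2/p}$ (the original Lyons--Victoir bound) to conclude continuity in $(s,t)$ and hence determination by dyadic values. The paper avoids this by building the extension as a limit along dyadics from the outset.
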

\begin{proof}
This proof follows the proof of lemma 2 in the Lyons-Victoir extension
paper \cite{LyonsVicExten}. We have established that, after reparameterization,
the area elements given above on dyadic points satisfy \[
\left|A_{\frac{k}{2^{n}},\frac{k+1}{2^{n}}}^{ij}\left(\varepsilon,\delta\right)-A_{\frac{k}{2^{n}},\frac{k+1}{2^{n}}}^{ij}\left(\tilde{\varepsilon},\tilde{\delta}\right)\right|\leq4\hat{C}\left(\left|\varepsilon-\tilde{\varepsilon}\right|+\left|\delta-\tilde{\delta}\right|\right)2^{-\frac{2n}{p}}\]
where $\hat{C}=\frac{1}{2}C^{2}\frac{\omega\left(0,T\right)^{\frac{2}{p}}}{T^{2}}\left[\sum_{l=0}^{\infty}2^{\frac{l\left(2-p\right)}{p}}\right]$,
since $\sum_{l=0}^{n-1}2^{\frac{l\left(2-p\right)}{2}}$ converges
as $2<p<3$. The first step is to prove the second inequality when
$s,t$ are dyadic points from the same level of fineness but not necessarily
consecutive points. Let $D_{m}=\cup_{k=0}^{2^{m}}\frac{k}{2^{m}}$
and consider all $s,t\in D_{m}$ such that $0<t-s<2^{-r}$ for some
fixed integer $r$. We want to show by induction that \begin{align*}
\left|A_{s,t}^{ij}\left(\varepsilon,\delta\right)-A_{s,t}^{ij}\left(\tilde{\varepsilon},\tilde{\delta}\right)\right| & \leq4\hat{C}\left(\left|\varepsilon-\tilde{\varepsilon}\right|+\left|\delta-\tilde{\delta}\right|\right)\sum_{k=r+1}^{m}2^{-\frac{2k}{p}}.\end{align*}
For the case when $m=r+1$ the above is identical to the condition.
For the inductive step, assume the statement is true for $m=r+1,\ldots,M-1$
and consider $s,t\in D_{M}$ with $0<t-s<2^{-r}$. Define two points
$s_{1}$ and $t_{1}$from the next coarsest level of dyadic points
which are nonetheless adjacent to $s$ and $t$, i.e. $s_{1}=\min\left\{ u\in D_{M-1}:u\geq s\right\} $
and $t_{1}:\max=\left\{ u\in D_{M-1}:u\leq t\right\} $. Then \begin{align*}
\left|A_{s,s_{1}}^{ij}\left(\varepsilon,\delta\right)-A_{s,s_{1}}^{ij}\left(\tilde{\varepsilon},\tilde{\delta}\right)\right|\leq & \hat{C}\left(\left|\varepsilon-\tilde{\varepsilon}\right|+\left|\delta-\tilde{\delta}\right|\right)2^{-\frac{2M}{p}}\\
\left|A_{t_{1},t}^{ij}\left(\varepsilon,\delta\right)-A_{t_{1},t}^{ij}\left(\tilde{\varepsilon},\tilde{\delta}\right)\right|\leq & \hat{C}\left(\left|\varepsilon-\tilde{\varepsilon}\right|+\left|\delta-\tilde{\delta}\right|\right)2^{-\frac{2M}{p}}.\end{align*}
Now, the area elements $A^{ij}$ corresponding to the $1,2$ entry
in the block matrix for $Z$ were constructed to satisfy \begin{align*}
A_{s,t}^{ij}\left(\varepsilon,\delta\right)= & A_{s,s_{1}}^{ij}\left(\varepsilon,\delta\right)+A_{s_{1},t}^{ij}\left(\varepsilon,\delta\right)+\frac{1}{2}\left(x_{s,s_{1}}^{i}\left(\varepsilon\right)x_{s_{1},t}^{j}\left(\delta\right)-x_{s,s_{1}}^{j}\left(\varepsilon\right)x_{s,t}^{i}\left(\delta\right)\right)\end{align*}
so that \begin{align*}
A_{s,t}^{ij}\left(\varepsilon,\delta\right)-A_{s,t}^{ij}\left(\tilde{\varepsilon},\tilde{\delta}\right)= & A_{s,s_{1}}^{ij}\left(\varepsilon,\delta\right)-A_{s,s_{1}}^{ij}\left(\tilde{\varepsilon},\tilde{\delta}\right)+A_{s_{1},t_{1}}^{ij}\left(\varepsilon,\delta\right)-A_{s_{1},t_{1}}^{ij}\left(\tilde{\varepsilon},\tilde{\delta}\right)\\
 & +A_{t_{1},t}^{ij}\left(\varepsilon,\delta\right)-A_{t_{1},t}^{ij}\left(\tilde{\varepsilon},\tilde{\delta}\right)\\
 & +\frac{1}{2}\left(x_{s,s_{1}}^{i}\left(\varepsilon\right)x_{s_{1},t}^{j}\left(\delta\right)-x_{s,s_{1}}^{j}\left(\varepsilon\right)x_{s,t}^{i}\left(\delta\right)\right)\\
 & -\frac{1}{2}\left(x_{s,s_{1}}^{i}\left(\tilde{\varepsilon}\right)x_{s_{1},t}^{j}\left(\tilde{\delta}\right)-x_{s,s_{1}}^{j}\left(\tilde{\varepsilon}\right)x_{s,t}^{i}\left(\tilde{\delta}\right)\right)\\
 & +\frac{1}{2}\left(x_{s_{1},t_{1}}^{i}\left(\varepsilon\right)x_{t_{1},t}^{j}\left(\delta\right)-x_{s_{1},t_{1}}^{j}\left(\delta\right)x_{t_{1},t}^{i}\left(\varepsilon\right)\right)\\
 & -\frac{1}{2}\left(x_{s_{1},t_{1}}^{i}\left(\tilde{\varepsilon}\right)x_{t_{1},t}^{j}\left(\tilde{\delta}\right)-x_{s_{1},t_{1}}^{j}\left(\tilde{\delta}\right)x_{t_{1},t}^{i}\left(\tilde{\varepsilon}\right)\right).\end{align*}
Applying the inductive step and adding and subtracting\begin{align*}
 & x_{s,s_{1}}^{i}\left(\varepsilon\right)x_{s_{1},t}^{j}\left(\tilde{\delta}\right),\\
 & x_{s,s_{1}}^{j}\left(\delta\right)x_{s_{1},t}^{i}\left(\tilde{\varepsilon}\right),\\
 & x_{s_{1},t_{1}}^{i}\left(\varepsilon\right)x_{t_{1},t}^{j}\left(\tilde{\delta}\right),\end{align*}
and\begin{align*}
 & x_{s_{1},t_{1}}^{j}\left(\delta\right)x_{t_{1},t}^{i}\left(\tilde{\varepsilon}\right)\end{align*}
we get \begin{align*}
\left|A_{s,t}^{ij}\left(\varepsilon,\delta\right)-A_{s,t}^{ij}\left(\tilde{\varepsilon},\tilde{\delta}\right)\right|\leq & 2\hat{C}\left(\left|\varepsilon-\tilde{\varepsilon}\right|+\left|\delta-\tilde{\delta}\right|\right)2^{-\frac{2M}{p}}\\
 & +\hat{C}\left(\left|\varepsilon-\tilde{\varepsilon}\right|+\left|\delta-\tilde{\delta}\right|\right)\sum_{k=r+1}^{M-1}2^{-\frac{2k}{p}}\\
 & +\frac{1}{2}\left|x_{s,s_{1}}^{i}\left(\varepsilon\right)\left[x_{s_{1},t}^{j}\left(\delta\right)-x_{s_{1},t}^{j}\left(\tilde{\delta}\right)\right]\right|\\
 & +\frac{1}{2}\left|x_{s,s_{1}}^{j}\left(\delta\right)\left[x_{s_{1},t}^{i}\left(\varepsilon\right)-x_{s_{1},t}^{i}\left(\tilde{\varepsilon}\right)\right]\right|\\
 & +\frac{1}{2}\left|\left[x_{s,s_{1}}^{i}\left(\tilde{\varepsilon}\right)-x_{s,s_{1}}^{i}\left(\varepsilon\right)\right]x_{s_{1},t}^{j}\left(\tilde{\delta}\right)\right|\\
 & +\frac{1}{2}\left|\left[x_{s,s_{1}}^{j}\left(\tilde{\delta}\right)-x_{s,s_{1}}^{j}\left(\delta\right)\right]x_{s,t}^{i}\left(\tilde{\varepsilon}\right)\right|\\
 & +\frac{1}{2}\left|x_{s_{1},t_{1}}^{i}\left(\varepsilon\right)\left[x_{t_{1},t}^{j}\left(\delta\right)-x_{t_{1},t}^{j}\left(\tilde{\delta}\right)\right]\right|\\
 & +\frac{1}{2}\left|x_{s_{1},t_{1}}^{j}\left(\delta\right)\left[x_{t_{1},t}^{i}\left(\varepsilon\right)-x_{t_{1},t}^{i}\left(\tilde{\varepsilon}\right)\right]\right|\\
 & +\frac{1}{2}\left|\left[x_{s_{1},t_{1}}^{i}\left(\tilde{\varepsilon}\right)-x_{s_{1},t_{1}}^{i}\left(\varepsilon\right)\right]x_{t_{1},t}^{j}\left(\tilde{\delta}\right)\right|\\
 & +\frac{1}{2}\left|\left[x_{s_{1},t_{1}}^{j}\left(\tilde{\delta}\right)-x_{s_{1},t_{1}}^{j}\left(\delta\right)\right]x_{t_{1},t}^{i}\left(\tilde{\varepsilon}\right)\right|.\end{align*}
Finally, we apply the condition to get \begin{align*}
\left|A_{s,t}^{ij}\left(\varepsilon,\delta\right)-A_{s,t}^{ij}\left(\tilde{\varepsilon},\tilde{\delta}\right)\right|\leq & 2\hat{C}\left(\left|\varepsilon-\tilde{\varepsilon}\right|+\left|\delta-\tilde{\delta}\right|\right)2^{-\frac{2M}{p}}\\
 & +4\hat{C}\left(\left|\varepsilon-\tilde{\varepsilon}\right|+\left|\delta-\tilde{\delta}\right|\right)\sum_{k=r+1}^{M-1}2^{-\frac{2k}{p}}\\
 & +2\hat{C}\left(\left|\varepsilon-\tilde{\varepsilon}\right|+\left|\delta-\tilde{\delta}\right|\right)2^{-\frac{2M}{p}}\end{align*}
so that \[
\left|A_{s,t}^{ij}\left(\varepsilon,\delta\right)-A_{s,t}^{ij}\left(\tilde{\varepsilon},\tilde{\delta}\right)\right|\leq4\hat{C}\left(\left|\varepsilon-\tilde{\varepsilon}\right|+\left|\delta-\tilde{\delta}\right|\right)\sum_{k=r+1}^{M}2^{-\frac{2k}{p}}\]
as required. This works because the sum $\sum_{l=0}^{\infty}2^{\frac{l\left(2-p\right)}{p}}$
is larger than $1$. For all points $s,t\in\cup_{m}D_{m}$ such that
$2^{-\left(r+1\right)}<t-s<2^{-r}$\begin{align*}
\left|A_{s,t}^{ij}\left(\varepsilon,\delta\right)-A_{s,t}^{ij}\left(\tilde{\varepsilon},\tilde{\delta}\right)\right|\leq & 4\hat{C}\left(\left|\varepsilon-\tilde{\varepsilon}\right|+\left|\delta-\tilde{\delta}\right|\right)\sum_{k=r+1}^{\infty}2^{-\frac{2k}{p}}\\
= & 4\hat{C}\left(\left|\varepsilon-\tilde{\varepsilon}\right|+\left|\delta-\tilde{\delta}\right|\right)2^{\frac{-2\left(r+1\right)}{p}}\sum_{k=0}^{\infty}2^{-\frac{2k}{p}}\\
\leq & \tilde{C}\left(\left|\varepsilon-\tilde{\varepsilon}\right|+\left|\delta-\tilde{\delta}\right|\right)\left(t-s\right)^{-\frac{2}{p}}.\end{align*}
Next, using the fact that for any real number $t$ , $\frac{\left\lfloor 2^{r}t\right\rfloor }{2^{r}}\rightarrow t$
as $r\rightarrow\infty$, we define for arbitrary $s,t\in\left[0,T\right]$
\begin{align*}
\hat{A}_{s,t}^{ij}\left(\varepsilon,\delta\right)= & \lim_{r\rightarrow\infty}A_{\frac{\left\lfloor 2^{r}s\right\rfloor }{2^{r}},\frac{\left\lfloor 2^{r}t\right\rfloor }{2^{r}}}^{ij}\left(\varepsilon,\delta\right)\end{align*}
and so, by continuity of the norm, the required estimate holds.
\end{proof}
From this we can define an extension $Z\left(\varepsilon,\delta\right)$
of $\left(X\left(\varepsilon\right),X\left(\delta\right)\right)$
where\begin{align*}
Z\left(\varepsilon,\delta\right)^{2}= & \left(\begin{array}{cc}
\frac{1}{2}x^{i}\left(\varepsilon\right)x^{j}\left(\varepsilon\right)+\hat{A}^{ij}\left(\varepsilon\right) & \frac{1}{2}x^{i}\left(\varepsilon\right)x^{j}\left(\delta\right)+\hat{A}^{ij}\left(\varepsilon,\delta\right)\\
\frac{1}{2}x^{i}\left(\delta\right)x^{j}\left(\varepsilon\right)+\hat{A}^{ij}\left(\varepsilon,\delta\right) & \frac{1}{2}x^{i}\left(\delta\right)x^{j}\left(\delta\right)+\hat{A}^{ij}\left(\delta\right)\end{array}\right).\end{align*}
This extension then satisfies \begin{align*}
\left|Z^{2}\left(\varepsilon,\delta\right)_{s,t}-Z^{2}\left(\tilde{\varepsilon},\tilde{\delta}\right)_{s,t}\right|\leq & C\left(\left|\varepsilon-\tilde{\varepsilon}\right|+\left|\delta-\tilde{\delta}\right|\right)\left(t-s\right)^{-\frac{2}{p}}\end{align*}
which means we can bound $d_{p}\left(Z\left(\varepsilon,\delta\right),Z\left(\tilde{\varepsilon},\tilde{\delta}\right)\right)$by
a constant multiplied by $\left|\varepsilon-\tilde{\varepsilon}\right|+\left|\delta-\tilde{\delta}\right|$. 

Since we now know the constructed $Z\left(\varepsilon,\delta\right)$
is continuous as a map from $\left(\varepsilon,\delta\right)\rightarrow WG\Omega_{p}\left(\mathbb{R}^{d}\right)$,
it is also measurable and so integration makes sense. 
\begin{defn}
\label{def:Integral}Given a path $X\left(\varepsilon\right)$ in
$W\Omega_{p}\left(\mathbb{R}^{d}\right)$ satisfying condition \ref{con:Integration}
and a measure $\mu$ supported on $\left[0,1\right]$, we define its
integral $X^{\mu}$ by\begin{align*}
X_{s,t}^{\mu,1}= & \int_{0}^{1}X_{s,t}^{1}\left(\varepsilon\right)\mu\left(d\varepsilon\right)\\
X_{s,t}^{\mu,2}= & \int_{0}^{1}\int_{0}^{1}\pi_{1,2}\left(Z\left(\varepsilon,\delta\right)\right)_{s,t}\mu\left(d\varepsilon\right)\mu\left(d\delta\right)\end{align*}
where $Z$ is defined by the extension constructed above.
\end{defn}
In the following proposition we show the integrated path is still
a rough path.
\begin{prop}
The object $X^{\mu}$ in definition \ref{def:Integral} is multiplicative.\end{prop}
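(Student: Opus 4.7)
The plan is to verify Chen's identity
\[
X^{\mu}_{s,u} = X^{\mu}_{s,t} \otimes X^{\mu}_{t,u}
\]
at each of the two nontrivial tensor levels by swapping the algebraic operations with integration against $\mu$ (respectively $\mu\otimes\mu$). The strategy is essentially definition-chasing: the hard work was done in the two preceding lemmas that built $Z(\varepsilon,\delta)$, and now we just harvest multiplicativity of $Z(\varepsilon,\delta)$ and apply Fubini.

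At level one, multiplicativity of each $X(\varepsilon)\in WG\Omega_p(\mathbb{R}^d)$ gives $X^{1}_{s,u}(\varepsilon) = X^{1}_{s,t}(\varepsilon) + X^{1}_{t,u}(\varepsilon)$ for every $\varepsilon$. Since the Bochner integral is linear, integrating in $\varepsilon$ against $\mu$ yields $X^{\mu,1}_{s,u} = X^{\mu,1}_{s,t} + X^{\mu,1}_{t,u}$ immediately.

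At level two, first I would spell out Chen's identity for $Z(\varepsilon,\delta)\in WG\Omega_p(\mathbb{R}^d\oplus\mathbb{R}^d)$ restricted to the $(1,2)$ block of the level-two component. As noted in the preliminaries, this reads
\[
\pi_{1,2}\bigl(Z(\varepsilon,\delta)\bigr)_{s,u} = \pi_{1,2}\bigl(Z(\varepsilon,\delta)\bigr)_{s,t} + \pi_{1,2}\bigl(Z(\varepsilon,\delta)\bigr)_{t,u} + X^{1}_{s,t}(\varepsilon)\otimes X^{1}_{t,u}(\delta)
\]
for every $(\varepsilon,\delta)\in[0,1]^{2}$. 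Integrating both sides against $\mu(d\varepsilon)\mu(d\delta)$, the first two terms on the right contribute $X^{\mu,2}_{s,t} + X^{\mu,2}_{t,u}$ directly from Definition \ref{def:Integral}. For the cross term I would invoke Fubini and bilinearity of $\otimes$ to factor
\[
\int_{0}^{1}\!\!\int_{0}^{1} X^{1}_{s,t}(\varepsilon)\otimes X^{1}_{t,u}(\delta)\,\mu(d\varepsilon)\mu(d\delta) = \Bigl(\int_{0}^{1} X^{1}_{s,t}(\varepsilon)\,\mu(d\varepsilon)\Bigr)\otimes\Bigl(\int_{0}^{1} X^{1}_{t,u}(\delta)\,\mu(d\delta)\Bigr) = X^{\mu,1}_{s,t}\otimes X^{\mu,1}_{t,u},
\]
which completes Chen's identity at level two.

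The only genuine obstacle is justifying the integrability and Fubini step. Measurability of $(\varepsilon,\delta)\mapsto \pi_{1,2}(Z(\varepsilon,\delta))_{s,t}$ follows from the Lipschitz continuity $d_{p}(Z(\varepsilon,\delta),Z(\tilde\varepsilon,\tilde\delta)) \leq C(|\varepsilon-\tilde\varepsilon|+|\delta-\tilde\delta|)$ established just before Definition \ref{def:Integral}, which makes $Z$ continuous (hence measurable) into $WG\Omega_p$ and in particular bounded on the compact square $[0,1]^{2}$ in the $p$-variation norm, so the iterated integral is finite; with finiteness in hand, classical Fubini in the finite-dimensional target space $\mathbb{R}^d\otimes\mathbb{R}^d$ produces the factorisation. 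Thus $X^{\mu}$ satisfies Chen's identity at both levels, completing the proof.
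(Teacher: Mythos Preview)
Your proof is correct and follows essentially the same approach as the paper: verify Chen's identity at level one by linearity of the integral applied to $X^{1}_{s,u}(\varepsilon)=X^{1}_{s,t}(\varepsilon)+X^{1}_{t,u}(\varepsilon)$, and at level two by integrating the $(1,2)$-block Chen identity for $Z(\varepsilon,\delta)$ and factoring the cross term $X^{1}_{s,t}(\varepsilon)\otimes X^{1}_{t,u}(\delta)$ into a product of one-variable integrals. Your explicit justification of measurability and Fubini via the Lipschitz continuity of $Z$ is a welcome addition that the paper leaves implicit under ``linearity of the integrals''.
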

\begin{proof}
We have \begin{align*}
\left(X_{s,t}^{\mu}\otimes X_{t,u}^{\mu}\right)^{1}= & \int_{0}^{1}X_{s,t}^{1}\left(\varepsilon\right)\mu\left(d\varepsilon\right)+\int_{0}^{1}X_{t,u}^{1}\left(\varepsilon\right)\mu\left(d\varepsilon\right)\\
= & \int_{0}^{1}\left[X_{s,t}^{1}\left(\varepsilon\right)+X_{t,u}^{1}\left(\varepsilon\right)\right]\mu\left(d\varepsilon\right)\\
= & \int_{0}^{1}X_{s,u}^{1}\left(\varepsilon\right)\mu\left(d\varepsilon\right)\\
= & X_{s,u}^{\mu,1}\end{align*}
since $X$ is multiplicative. Also,\begin{align*}
\left(X_{s,t}^{\mu}\otimes X_{t,u}^{\mu}\right)^{2}= & \int_{0}^{1}\int_{0}^{1}\pi_{1,2}\left(Z\left(\varepsilon,\delta\right)\right)_{s,t}\mu\left(d\varepsilon\right)\mu\left(d\delta\right)\\
 & +\int_{0}^{1}\int_{0}^{1}\pi_{1,2}\left(Z\left(\varepsilon,\delta\right)\right)_{t,u}\mu\left(d\varepsilon\right)\mu\left(d\delta\right)\\
 & +\int_{0}^{1}X_{s,t}^{1}\left(\varepsilon\right)\mu\left(d\varepsilon\right)\otimes\int_{0}^{1}X_{t,u}^{1}\left(\delta\right)\mu\left(d\delta\right)\end{align*}
and since $Z\left(\varepsilon,\delta\right)$ is multiplicative \begin{align*}
\pi_{1,2}\left(Z\left(\varepsilon,\delta\right)\right)_{s,u}= & \pi_{1,2}\left(Z\right)_{s,t}+\pi_{1,2}\left(Z\right)_{t,u}+X_{s,t}^{1}\left(\varepsilon\right)\otimes X_{t,u}^{1}\left(\delta\right).\end{align*}
This together with the linearity of the integrals implies \begin{align*}
\left(X_{s,t}^{\mu}\otimes X_{t,u}^{\mu}\right)^{2}= & \int_{0}^{1}\int_{0}^{1}\pi_{1,2}\left(Z\left(\varepsilon,\delta\right)\right)_{s,u}\mu\left(d\varepsilon\right)\mu\left(d\delta\right)\\
= & X_{s,u}^{\mu,2}.\end{align*}
\end{proof}

\end{document}